    \def\MR#1{}
\Crefname{Lemma}{Lemma}{Lemmas}
\Crefname{Theorem}{Theorem}{Theorems}
\theoremstyle{plain}
\newtheorem{Theorem}{Theorem}[section]
\newtheorem{Lemma}[Theorem]{Lemma}
\newtheorem{Corollary}[Theorem]{Corollary}
\newtheorem{Proposition}[Theorem]{Proposition}
\theoremstyle{definition}
\newtheorem{Assumptions and Discussion}[Theorem]{Assumptions and Discussion}
\newtheorem{Definition}[Theorem]{Definition}
\newtheorem{Remark}[Theorem]{Remark}
\theoremstyle{remark}
\newtheorem{Setting}[Theorem]{Setting}
\newtheorem*{acknowledgment*}{Acknowledgment}
\def\bar#1{\overline{#1}}
\def\Char{\operatorname{char}}
\def\cochord{\operatorname{cochord}}
\def\conv{\operatorname{conv}}
\def\deg{\operatorname{deg}}
\def\depth{\operatorname{depth}}
\def\dim{\operatorname{dim}}
\def\floor#1{\left\lfloor #1 \right\rfloor}
\def\gr{\operatorname{gr}}
\def\ini{\operatorname{in}} 
\def\into{\hookrightarrow}
\def\KK{{\mathbb K}}
\def\Mat{\operatorname{Mat}}
\def\Min{\operatorname{Min}}
\def\onto{\twoheadrightarrow}
\def\QQ{{\mathbb Q}}
\def\reg{\operatorname{reg}}
\def\ri{\operatorname{ri}}
\def\RR{{\mathbb R}}
\def\depth{\operatorname{depth}}
\def\ZZ{{\mathbb Z}}
\newcommand\bdalpha{\bm{\alpha}}
\newcommand\bdA{\bm{A}}
\newcommand\bdD{\bm{D}}
\newcommand\bdE{\bm{E}}
\newcommand\bde{\bm{e}}
\newcommand\bdF{\bm{F}}
\newcommand\bdu{\bm{u}}
\newcommand\bdv{\bm{v}}
\newcommand\bdX{\bm{X}}
\newcommand\bdY{\bm{Y}}
\newcommand\calA{\mathcal{A}}
\newcommand\calC{\mathcal{C}}
\newcommand\calF{\mathcal{F}}
\newcommand\calG{\mathcal{G}}
\newcommand\calJ{\mathcal{J}}
\newcommand\calQ{\mathcal{Q}}
\newcommand\calR{\mathcal{R}}
\newcommand\calZ{\mathcal{Z}}
\newcommand\frakm{\mathfrak{m}}
\newcommand\frakP{\mathfrak{P}}
\newcommand\frakp{\mathfrak{p}}
\newcommand{\Ass}{\operatorname{Ass}}
\newcommand{\match}{\operatorname{match}}
\newcommand{\pd}{\operatorname{pd}}
\begin{document}

\title{Powers of generalized binomial edge ideals of path graphs}

\author{Yi-Huang Shen}
\address{CAS Wu Wen-Tsun Key Laboratory of Mathematics, School of Mathematical Sciences, University of Science and Technology of China, Hefei, Anhui, 230026, P.R.~China}
\email{yhshen@ustc.edu.cn}

\author{Guangjun Zhu$^{\ast}$}
\address{School of Mathematical Sciences, Soochow University, Suzhou, Jiangsu, 215006, P.R.~China}
\email{zhuguangjun@suda.edu.cn}

\thanks{$^{\ast}$ Corresponding author}
\thanks{2020 {\em Mathematics Subject Classification}.
Primary 13C15, 13P10; Secondary 05E40, 13F20}

\thanks{Keywords: Regularity, depth, generalized binomial edge ideal, path graph, Rees algebra, special fiber ring}

\begin{abstract}
    In this article, we study the powers of the generalized binomial edge ideal $\mathcal{J}_{K_m,P_n}$
    of a path graph $P_n$. We explicitly compute their regularities  and  determine the limit of their depths. We also show that these ordinary powers coincide with their symbolic powers. Additionally, we 
    study the Rees algebra and the special fiber ring of $\mathcal{J}_{K_m,P_n}$ via Sagbi basis theory. In particular, we obtain exact formulas for the regularity of these blowup algebras.
\end{abstract}

\maketitle

\section{Introduction}
Let $m$ and $n$ be two positive integers and $[m]$ be  the set $\{1,2,\ldots, m\}$. In addition, let $S=\KK[\bdX]\coloneqq\KK[x_{i,j}:i\in[m],j\in[n]]$ be the polynomial ring in $m\times n$ variables over a field $\KK$. In \cite{MR3290687}, Ene et al.~introduced the {binomial edge ideal of a pair of graphs}. Specifically, let $G_1$ and $G_2$ be simple graphs on vertex sets $[m]$ and $[n]$ respectively. Suppose that $e=\{i, j\}\in E(G_1)$ and $f=\{k, l\}\in E(G_2)$ are two edges with $i<j$ and $k<l$. Then, one can associate a $2$-minor 
\[
    p_{(e,f)}=[i,j\,|\,k,l]\coloneqq x_{i,k}x_{j,l}-x_{i,l}x_{j,k} 
\]
to the pair $(e, f)$. The \emph{binomial edge ideal of the pair $(G_1, G_2)$} is defined as follows
\[
    \calJ_{G_1,G_2}\coloneqq (p_{(e,f)} : e \in E(G_1), f \in E(G_2)).
\]
 This is a generalization of the  classical \emph{binomial edge ideals} in \cites{MR2669070, MR2782571}, if one of $G_1$ and $G_2$ is the complete graph $K_2$. Meanwhile, the \emph{ideals generated by adjacent minors} in \cite{MR1627343} turn out to be the binomial edge ideals of a pair of path graphs.

In the last decade, researchers have  tried to understand the connection between the algebraic properties of $\calJ_{G_1,G_2}$ and the combinatorial properties of $G_1$ and $G_2$. In \cite{MR3290687}, Ene et al.~proved that $\calJ_{G_1,G_2}$ is a radical ideal if and only if either $G_1$ or $G_2$ is a complete graph. Furthermore,  $\calJ_{G_1,G_2}$ is a prime ideal if and only if both $G_1$ and $G_2$ are complete. The importance of the complete graph in the pair can be further demonstrated by \cite[Theorem 3.1]{MR3859970} and \cite[Theorems 1 and 10]{MR3040610}.

If $K_m$ is a complete graph with $m$ vertices and $G$ is a simple graph, then $\calJ_{K_m,G}$ is the \emph{generalized binomial edge ideal} associated with $G$, which was previously introduced by Rauh in \cite{MR3011436} for the study of conditional independence ideals. Inspired by the progress on classical binomial edge ideals, researchers turn to the study of the generalized binomial edge ideals. For example, Chaudhry and Irfan  in \cite{MR4233116}  proved  that, for a block graph $G$, $\calJ_{K_m,G}$ is Cohen--Macaulay if and only if $\calJ_{K_m,G}$ is unmixed if and only if $G$ is a complete graph.

Blowup algebras are ubiquitous in commutative algebra and algebraic geometry. The \emph{Rees algebra} $\calR(I)\coloneqq\oplus_{k\ge 0}I^kT^k$ and the \emph{special fiber ring} $\calF(I)\coloneqq\calR(I)/\frakm\calR(I)$ of an ideal $I$ are two important blowup algebras  that  uniquely encode all powers of $I$.  In the case of binomial edge ideals, their properties have been explored by several researchers. For example, in \cite{MR4173994}, Jayanthan et al.~proved that the Rees algebra of an almost complete intersection binomial edge ideal is Cohen--Macaulay. In \cite{MR4425287},  Ene et al.~showed that  the Rees algebra of the binomial edge ideal of a  closed graph is Cohen--Macaulay. Later, Kumar in \cite{MR4405525}  proved that the special fiber ring of the binomial edge ideal of a closed graph is Koszul and  normal Cohen--Macaulay. 

In this article, we are interested in the Castelnuovo--Mumford regularity (regularity for short) and the depth of powers of 
generalized binomial edge ideals.
It is well-known that if $I$ is a homogeneous ideal of a polynomial ring $R$,
then $\reg(R/I^t)$ is asymptotically linear in $t$. At the same time, $\depth(R/I^t)$ is constant for sufficiently large $t$ (cf.~\cites{MR1711319,MR1621961,MR0530808}). It is usually difficult to determine when these phenomena begin. For this problem, the simplest case is when $I$ is a quadratic squarefree monomial ideal, i.e., when $I$ can be recognized as the edge ideal of a suitable graph.  In this case, many results have been achieved for simple classes such as  forest graphs, cycle graphs, bipartite graphs, and so on. In addition, a few results are known for the binomial edge ideal of a graph. For example, 
in \cite{JKS}, Jayanthan et al.~gave an upper bound on the regularity of powers of almost complete intersection binomial edge ideals using the quadratic sequence 
approach.  Meanwhile, they 
gave the exact formulas for the regularity of powers of binomial edge ideals of several simple graphs such as cycle graphs, star graphs, and balloon graphs. Recently, in \cite{MR4544259}, we gave explicit formulas for the regularity of powers of  binomial edge ideals which are almost complete intersections. 
At the same time, in \cite{MR4563443}, Wang and Tang studied the depth of powers of binomial edge ideals of complete bipartite graphs.  Ene et al.~in \cite{MR4425287} studied the regularity and the depth of powers of the binomial edge ideals of connected closed graphs. 

However, nearly nothing is known about the algebraic properties of powers of generalized binomial edge ideals. In this paper we will start such a study by considering the generalized binomial edge ideal $\calJ_{K_m,P_n}$ of the path graph $P_n$.

The article is organized as follows. In Section \ref{sec:prelim}, we briefly review essential definitions and terminology that we will need later. In Section \ref{sec:power}, we show that taking-initial-ideal commutes with taking-powers, when coming to $\calJ_{K_m,P_n}$. Using this fact, we study the regularity and the depth of the powers of $\calJ_{K_m,P_n}$ via the Sagbi basis theory. We also show that the symbolic powers and ordinary powers of $\calJ_{K_m,P_n}$ coincide. In Section \ref{sec:blowup}, we study the regularities of the Rees algebra and the special fiber ring of $\calJ_{K_m,P_n}$, by  considering the corresponding problems of their initial algebras. The final computation builds on the combinatorial optimization of different flavors. In the last section, we give applications of the previous results. Since the philosophy of combinatorial pure subrings 
applies here, we can consider the binomial edge ideal of a pair of graphs. In particular, we will give natural bounds on the regularities of the powers of $\calJ_{K_m,G}$ as well as the two blowup algebras of this generalized binomial edge ideal, if $G$ contains an induced path $P_n$.

\section{Preliminaries}
\label{sec:prelim}

In this brief section, we provide a concise overview of some combinatorial notions that will be employed throughout this paper. For a more comprehensive treatment from an algebraic perspective, we refer the readers to \cites{MR2724673,MR3838370,MR3362802}.

Let $G$ be a simple graph with the vertex set $V(G)$ and the edge set $E(G)$. For a vertex $v$ of $G$, the set of all neighborhoods of $v$ is denoted by $N_G(v)= \{u \in V(G): \{u,v\}\in E(G)\}$. A vertex $v$ is called a \emph{leaf} of $G$ if $N_G(v)$ has cardinality one and $v$ is \emph{isolated} if $N_G(v)=\emptyset$.  

For any subset $A$ of $V(G)$, let $G[A]$ denote the \emph{induced subgraph} of $G$ on the  set $A$, i.e., for $u,v \in A$, $\{u,v\} \in E(G[A])$ if and only if $\{u,v\}\in E(G)$. At the same time, we denote the induced subgraph of $G$ on the  set $V(G)\setminus A$ by $G\setminus A$. 

A subset $M\subset E(G)$ is a \emph{matching} of $G$ if $e\cap e'=\emptyset$ for all distinct edges $e$ and $e'$ in $M$.  The \emph{matching number} of $G$, denoted by $\match(G)$, is the maximum size of a  matching in $G$.  If $G$ is a bipartite graph having  vertex partitions $V_1$ and $V_2$, a \emph{complete matching from $V_2$ to $V_1$} is  a matching in which there is one edge incident with every vertex in $V_2$. In other words, every vertex in $V_2$ is matched against some vertex in $V_1$. Whence, $\match(G)=|V_2|$.

A \emph{walk} $W$ of length $n$ in a graph $G$ is a sequence of vertices $(w_1,\ldots, w_n,w_{n+1})$, 
such that $\{w_i,w_{i+1}\}\in E(G)$ for $1\le i\le n$. The walk $W$ is \emph{closed} if $w_1=w_{n+1}$.
Furthermore, the walk $W$ is called a \emph{cycle} if it is closed and the points $w_1,\ldots,w_n$ are distinct. At the same time, a \emph{path} is a walk where all  points are distinct. For simplicity, a path of length $n-1$ is denoted by $P_n$, and a cycle of length $n$ is denoted by $C_n$.

\section{Powers of generalized binomial edge ideals of paths}
\label{sec:power} 

In this section, we will study the regularity and the depth of the powers of the generalized binomial edge ideal $\calJ_{K_m,P_n}$, where $P_n$ is a path graph with $n$ vertices. Using Sagbi basis theory, we can turn to the corresponding study of their initial ideals. However, to make this approach work, we need to show first in \Cref{lem:sagbi_path} that taking-initial-ideal commutes with taking-powers when coming to $\calJ_{K_m,P_n}$. Since the proof for this is rather technical, we need  to make some preparations. Throughout this paper, we will stay with the following setting:

\begin{Setting}
    \label{set:path} 
    Let $m,n\ge 2$ be two integers. The polynomial ring $S \coloneqq \KK[x_{i,j}:i\in [m],j\in [n]]$ over a field $\KK$ is endowed with the term order $\tau$, which is the lexicographic order on $S$ induced by the natural order 
    \[
        x_{1,1}>x_{1,2}>\cdots>x_{1,n}>x_{2,1}>x_{2,2}>\cdots>x_{2,n}>\cdots>x_{m,1}>x_{m,2}>\cdots>x_{m,n}.
    \]
    Let $P_n$ be the path on the set $[n]$ whose edge set is $E(P_n)=\Set{\{i,i+1\}:i\in [n-1]}$. Furthermore, let $H$ be the graph on the set $\Set{x_{i,j}:i\in[m],j\in[n]}$ with 
    \[
        E(H)=\Set{\{x_{i,j},x_{i',j+1}\}:1\le i< i'\le m, 1\le j\le n-1}.
    \]
    For $k\in [n-1]$, let $H_k$ be the induced subgraph of $H$ on the set $\Set{x_{i,j}: i\in[m],j\in\{k,k+1\}}$. 
\end{Setting}

Regarding the graph $H$ in \Cref{set:path}, we have the following two observations:

\begin{Remark}
    \label{rmk:graph_H}
    \begin{enumerate}[a]
        \item If $G$ is a simple graph on the set $[n]$, the Gr\"obner basis of the generalized binomial edge ideal $\calJ_{K_m,G}$ with respect to the term order $\tau$ was computed in \cite[Theorem 2]{MR3011436}. In particular, the initial ideal $\ini_\tau(\calJ_{K_m,P_n})$ equals the ideal
            \[
                (x_{i,j}x_{i',j+1}:1\le i<i'\le m, 1\le j\le n-1) 
            \]
            in $S$. It is clear that this is the edge ideal $I(H)$ of the graph $H$.

        \item The graph $H$ is bipartite with respect to the bipartition $V_1\sqcup V_2$, where 
            \[
                V_1=\Set{x_{i,j}\in \bdX:i\in [m], \text{$j$ is odd}} 
                \qquad \text{and} \qquad
                V_2=\Set{x_{i,j}\in \bdX:i\in [m], \text{$j$ is even}}.
            \]
            When $m\ge 3$, $H$ has $3$ connected components, where $x_{m,1}$ and $x_{1,n}$ are isolated vertices. If instead $m=2$, then $H$ has $2+(n-1)=n+1$ connected components, where $x_{m,1}$ and $x_{1,n}$ are still isolated vertices; see \Cref{Fig:H}. 
            \begin{figure}[htbp]
                \begin{minipage}{0.48\textwidth}
                    \centering
                    \begin{tikzpicture}[thick, scale=0.8, every node/.style={scale=0.98}]]
                        \draw[solid](1,0)--(0,1);
                        \draw[solid](2,0)--(1,1);
                        \draw[solid](5,0)--(4,1);
                        \draw[solid](6,0)--(5,1);
                        \draw[dotted] (2.5,0.5)--(3.5,0.5);

                        \shade [shading=ball, ball color=black] (0,1) circle (.07);
                        \shade [shading=ball, ball color=black] (1,1) circle (.07);
                        \shade [shading=ball, ball color=black] (4,1) circle (.07);
                        \shade [shading=ball, ball color=black] (5,1) circle (.07);
                        \shade [shading=ball, ball color=black] (6,1) circle (.07);
                        \shade [shading=ball, ball color=black] (0,0) circle (.07);
                        \shade [shading=ball, ball color=black] (1,0) circle (.07);
                        \shade [shading=ball, ball color=black] (2,0) circle (.07);
                        \shade [shading=ball, ball color=black] (5,0) circle (.07);
                        \shade [shading=ball, ball color=black] (6,0) circle (.07);
                    \end{tikzpicture}
                    \subcaption*{$m=2$ case}
                \end{minipage}\hfill
                \begin{minipage}{0.48\textwidth}
                    \centering
                    \begin{tikzpicture}[thick, scale=0.8, every node/.style={scale=0.98}]]
                        \draw[solid](1,0)--(0,1);
                        \draw[solid](2,0)--(1,1);
                        \draw[solid](5,0)--(4,1);
                        \draw[solid](6,0)--(5,1);

                        \draw[solid](1,0)--(0,2);
                        \draw[solid](2,0)--(1,2);
                        \draw[solid](5,0)--(4,2);
                        \draw[solid](6,0)--(5,2);

                        \draw[solid](1,1)--(0,2);
                        \draw[solid](2,1)--(1,2);
                        \draw[solid](5,1)--(4,2);
                        \draw[solid](6,1)--(5,2);

                        \draw[solid](1,0)--(0,3);
                        \draw[solid](2,0)--(1,3);
                        \draw[solid](5,0)--(4,3);
                        \draw[solid](6,0)--(5,3);

                        \draw[solid](1,2)--(0,3);
                        \draw[solid](2,2)--(1,3);
                        \draw[solid](5,2)--(4,3);
                        \draw[solid](6,2)--(5,3);

                        \draw[solid](1,1)--(0,3);
                        \draw[solid](2,1)--(1,3);
                        \draw[solid](5,1)--(4,3);
                        \draw[solid](6,1)--(5,3);

                        \draw[dotted] (2.5,0.5)--(3.5,0.5);
                        \draw[dotted] (2.5,1.5)--(3.5,1.5);
                        \draw[dotted] (2.5,2.5)--(3.5,2.5);

                        \shade [shading=ball, ball color=black] (0,3) circle (.07);
                        \shade [shading=ball, ball color=black] (1,3) circle (.07);
                        \shade [shading=ball, ball color=black] (4,3) circle (.07);
                        \shade [shading=ball, ball color=black] (5,3) circle (.07);
                        \shade [shading=ball, ball color=black] (6,3) circle (.07);
                        \shade [shading=ball, ball color=black] (0,2) circle (.07);
                        \shade [shading=ball, ball color=black] (1,2) circle (.07);
                        \shade [shading=ball, ball color=black] (4,2) circle (.07);
                        \shade [shading=ball, ball color=black] (5,2) circle (.07);
                        \shade [shading=ball, ball color=black] (6,2) circle (.07);
                        \shade [shading=ball, ball color=black] (0,1) circle (.07);
                        \shade [shading=ball, ball color=black] (1,1) circle (.07);
                        \shade [shading=ball, ball color=black] (2,1) circle (.07);
                        \shade [shading=ball, ball color=black] (4,1) circle (.07);
                        \shade [shading=ball, ball color=black] (5,1) circle (.07);
                        \shade [shading=ball, ball color=black] (6,1) circle (.07);
                        \shade [shading=ball, ball color=black] (0,0) circle (.07);
                        \shade [shading=ball, ball color=black] (1,0) circle (.07);
                        \shade [shading=ball, ball color=black] (2,0) circle (.07);
                        \shade [shading=ball, ball color=black] (5,0) circle (.07);
                        \shade [shading=ball, ball color=black] (6,0) circle (.07);
                    \end{tikzpicture}
                    \subcaption*{$m=4$ case}
                \end{minipage}
                \caption{Graph $H$}
                \label{Fig:H}
            \end{figure}
    \end{enumerate}
\end{Remark}

The proof of \Cref{lem:sagbi_path} depends on the presentation ideal of the Rees algebra $\calR(I(H))$ of the edge ideal $I(H)$ and the ``lifts'' of the Sagbi basis. Suppose that $\calG(I(H))=\{f_1,\ldots,f_q\}$ is the minimal monomial generating set of $I(H)$. Then, there exists a canonical homomorphism from the polynomial ring  $B\coloneqq S[T_1,\dots,T_q]$ to the Rees algebra $\calR(I(H))\coloneqq S[f_1T,\dots,f_qT]\subset S[T]$, induced by $T_i\mapsto f_iT$.
Let $\deg(T_1)=\cdots=\det(T_q)=\deg(T)=1$, and $\deg(x_{i,j})=0$ for every $x_{i,j}\in \bdX$. Then, this map is a graded homomorphism of $S$-algebras.
Its kernel $J$ will be called the \emph{presentation ideal} of $\calR(I(H))$ with respect to $\calG(I(H))$. This is a graded ideal and $\calR(I(H))$ has the \emph{presentation} $\calR(I(H))=B/J$. 

Similarly, there is a canonical homomorphism from the polynomial ring $B'\coloneqq \KK[T_1,\dots,T_q]$ to the special fiber ring $\calF(I(H))\cong \KK[H]$, induced by $T_i\mapsto f_i$. The kernel ideal $J'$ of this map is called the \emph{presentation ideal} of $\calF(I)$. It leads to the \emph{presentation} $\calF(I(H))=B'/J'$.

In addition, let $W=(w_1,w_2,\dots,w_{2s+1}=w_1)$ be an even closed walk in $H$ and suppose that $e_j=\{w_{j},w_{j+1}\}$ for each $j$. We will write $T_{W^+}-T_{W^-}$ for the binomial $T_{e_1}T_{e_3}\cdots T_{e_{2s-1}}-T_{e_2}T_{e_4}\cdots T_{e_{2s}}$ in $J$. We are mostly interested in the case where $W$ is a primitive cycle. Recall that a \emph{cycle} $C$ is a closed walk with distinct vertices. A \emph{chord} of a cycle $C$ in a graph $G$ is an edge of $G$ that connects two non-adjacent vertices of $C$. A cycle without chords is called \emph{primitive}. Binomials from primitive cycles are important for describing the presentation ideals.

\begin{Lemma}
    [{\cite[Proposition 10.1.14, Theorem 10.1.15]{MR3362802}}]
    \label{lem:defining_ideal}
    Let $I$ be the edge ideal of a bipartite graph. 
    \begin{enumerate}[a]
        \item Suppose that $\calR(I) = B/J$ is the presentation of the Rees algebra $\calR(I)$.
            Then $J = BJ_1 + B P$, where $J_1$ is the degree $1$ part of the graded ideal $J$, and
            \[
                P\coloneqq \Set{T_{w^+}-T_{w^-}:\text{$w$ is a primitive cycle in $H$}}.
            \]
        \item Suppose that $\calF(I) = B'/J'$ is the presentation of the special fiber ring $\calF(I)$. Then, $J'$ is minimally generated by the set $P$.
    \end{enumerate}
\end{Lemma}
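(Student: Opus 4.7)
The plan is to first establish (b) via toric-ideal techniques and then deduce (a) by a lifting argument along the quotient $\pi\colon B \twoheadrightarrow B/\mm B = B'$.

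For (b), note that $\calF(I(H)) \cong \KK[f_1,\ldots,f_q]$ is the edge subring of $H$, and $J'$ is its toric ideal. By the standard characterization of toric ideals, $J'$ is generated by the pure binomials $T^{\bdalpha}-T^{\bdbeta}$ with $\prod_i f_i^{\alpha_i} = \prod_i f_i^{\beta_i}$ in $S$. Any such equality records a balanced multi-subgraph of $H$, which, because $H$ is bipartite, decomposes into a disjoint union of even closed walks. Hence $J'$ is generated by the binomials $T_{W^+}-T_{W^-}$ as $W$ ranges over the even closed walks of $H$. I would then reduce to primitive cycles by induction on the length of $W$: if $W$ has a repeated internal vertex, split $W$ there into two strictly shorter even closed walks; if $W$ is a simple cycle carrying a chord $\{w_i,w_j\}$, then $j-i$ is even by bipartiteness and the chord partitions $W$ into two shorter even closed walks; in either case, $T_{W^+}-T_{W^-}$ becomes a $B'$-combination of shorter binomials, closing the induction.

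For (a), take an arbitrary $F \in J$. By (b) one can write $\pi(F) = \sum_i \bar g_i p_i$ with $p_i \in P$ and $\bar g_i \in B'$; lifting the $\bar g_i$ to $g_i \in B$ produces $G = \sum_i g_i p_i \in BP$ with $\pi(G) = \pi(F)$, so $F - G \in J \cap \mm B$. It therefore suffices to prove $J \cap \mm B \subseteq BJ_1$. This follows from the two-part structure of the defining ideal of the Rees algebra of a squarefree monomial ideal generated in a single degree: modulo the toric (pure-$T$) relations, every relation in $J$ is an $S$-linear combination of $S$-linear syzygies among $f_1,\ldots,f_q$, and such syzygies are precisely the elements of $J_1$.

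The main technical obstacle is the walk-splitting step in paragraph 2. In the repeated-vertex case the two resulting sub-walks need not a priori be of even length; one must invoke bipartiteness both to force evenness and to align the $W^+/W^-$ alternation properly, so that an identity of the form
\[
    T_{W^+}-T_{W^-} \;=\; T^{\bdgamma}\bigl(T_{W'^+}-T_{W'^-}\bigr) \;\pm\; T^{\bdgamma'}\bigl(T_{W''^+}-T_{W''^-}\bigr)
\]
actually holds in $B'$ for appropriate monomials $T^{\bdgamma}, T^{\bdgamma'}$ and shorter closed walks $W', W''$. Analogous bookkeeping is needed in the chord case.
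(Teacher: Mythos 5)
The paper does not actually prove this lemma; it is quoted verbatim from Villarreal's book, so there is no internal proof to compare against. Your part (b) follows the standard route for toric ideals of edge subrings and is essentially sound: in a bipartite graph every closed walk — hence every closed subwalk produced by splitting at a repeated vertex or along a chord — automatically has even length, so the splitting identities you flag as the technical obstacle do hold. You should, however, add a line for the asserted \emph{minimality} of $P$: binomials of distinct primitive cycles have distinct multidegrees, and chordlessness is what prevents a primitive-cycle binomial from being expressed through shorter ones.

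Part (a) has a genuine gap: the inclusion $J\cap\mm B\subseteq BJ_1$ is false. Take a primitive $4$-cycle with edges $e_1=\{u,v\}$, $e_2=\{v,w\}$, $e_3=\{w,z\}$, $e_4=\{z,u\}$, set $p=T_{e_1}T_{e_3}-T_{e_2}T_{e_4}\in P$, and let $y$ be any variable of $S$ not dividing $uvwz$ (such variables exist in $H$ whenever it has a cycle, e.g.\ any vertex outside the cycle). Then $yp\in J\cap\mm B$, but $yp\notin BJ_1$: the element $yp$ is multihomogeneous of multidegree $yuvwz\,T^2$, the $S$-module $J_1$ is generated by the multigraded syzygies $\sigma_{ij}=\tfrac{\lcm(f_{e_i},f_{e_j})}{f_{e_i}}T_{e_i}-\tfrac{\lcm(f_{e_i},f_{e_j})}{f_{e_j}}T_{e_j}$, and any contribution $mT_{e_k}\sigma_{ij}$ in that multidegree would force $mf_{e_k}$ to equal $y$ times at most one of $u,v,w,z$, which no quadratic edge monomial $f_{e_k}$ divides. (By contrast $up\in BJ_1$ does hold, which is why the support condition on $y$ matters.) This is precisely the element your reduction leaves behind: for $F=yp$ one has $\pi(F)=0$, hence $G=0$ and $F-G=yp$, and your final step then demands $yp\in BJ_1$. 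The underlying difficulty is that reducing modulo $\mm$ cannot detect $\mm$-multiples of the toric binomials, so the remainder $F-G$ genuinely needs to land in $BJ_1+BP$ rather than $BJ_1$, and proving that requires a different mechanism — for instance, using that $J$ is itself a binomial (toric) ideal because $I$ is monomial, and inducting on the $T$-degree of a binomial generator $mT^{\bdalpha}-m'T^{\bdbeta}$, which is how the cited result is actually established.
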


Recall that a bipartite graph is \emph{chordal bipartite} if every cycle of length at least six has a chord in it. In other words, the length of every primitive cycle of this bipartite graph is $4$.

\begin{Lemma}
    \label{lem:chordal_bipartite}
    Let $H,H_1,\dots,H_{n-1}$ be as in \Cref{set:path}. Then, $H$ is a chordal bipartite graph. 
\end{Lemma}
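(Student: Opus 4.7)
The plan is to argue by contradiction: assume $C = u_0 u_1 \cdots u_{2s-1} u_0$ is a primitive cycle of $H$ of length $2s \ge 6$ and exhibit a chord. Writing $u_i = x_{r_i, c_i}$, the definition of $E(H)$ dictates that every edge of $C$ satisfies $|c_{i+1} - c_i| = 1$ together with $r_i < r_{i+1}$ when $c_{i+1} = c_i + 1$ and $r_i > r_{i+1}$ when $c_{i+1} = c_i - 1$. In particular, an edge of $H$ can connect two vertices of $C$ only if their columns differ by exactly $1$, so the cycle behaves like a closed lattice path whose steps are positive-slope diagonals in the $(c,r)$-plane.

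Now choose $u_t$ to be the vertex of $C$ that minimizes $(c_i, r_i)$ lexicographically. Set $c^* \coloneqq c_t$ and $b \coloneqq r_t$, so $c^* = \min_i c_i$ and $b$ is the smallest row appearing in column $c^*$ on $C$. Because $c^*$ is minimal, both $C$-neighbors of $u_t$ lie in column $c^* + 1$; write $u_{t-1} = x_{a, c^*+1}$ and $u_{t+1} = x_{c, c^*+1}$, and read off $b < a$ and $b < c$ from the two edges at $u_t$. Re-indexing $C$ in the opposite direction if necessary (which swaps the roles of $u_{t-1}$ and $u_{t+1}$), I may assume $a < c$. The argument then splits on the column of $u_{t+2}$, which must equal either $c^* + 2$ or $c^*$. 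If $c_{t+2} = c^* + 2$, write $u_{t+2} = x_{d, c^*+2}$; the edge $u_{t+1}u_{t+2}$ forces $c < d$, so $a < c < d$ makes $\{u_{t-1}, u_{t+2}\}$ an edge of $H$. If instead $c_{t+2} = c^*$, write $u_{t+2} = x_{d, c^*}$; now the edge forces $d < c$ and the lex-minimality of $u_t$ forces $b < d$. When $d < a$, the pair $\{u_{t-1}, u_{t+2}\}$ is again an edge of $H$, while when $a \le d < c$ I step once more: column-minimality forces $u_{t+3} = x_{f, c^*+1}$, the edge $u_{t+2}u_{t+3}$ forces $d < f$, and then $b < d < f$ makes $\{u_t, u_{t+3}\}$ an edge of $H$. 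In every branch the asserted edge has cyclic distance $3$ on $C$, so it is non-adjacent whenever $2s \ge 6$ and therefore a genuine chord; this contradicts primitivity and forces $2s = 4$.

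The step I expect to be most delicate is the subcase $c_{t+2} = c^*$, $a \le d < c$, where the "distance-$3$" candidate $\{u_{t-1}, u_{t+2}\}$ fails and one is pushed out to $\{u_t, u_{t+3}\}$. This is exactly where the strict \emph{lex}-minimality of $u_t$ (rather than mere column-minimality) earns its keep: it is only through the inequality $b < d$ inside column $c^*$ that I can promote the edge-inequality $d < f$ inside column $c^* + 1$ into the comparison $b < f$ which produces the chord. The remaining bookkeeping—that $u_{t+3}$ is distinct from each of $u_{t-1}, u_t, u_{t+1}$ and is non-adjacent to $u_t$ on $C$—is automatic from the simplicity of $C$ together with $2s \ge 6$.
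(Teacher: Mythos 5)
Your argument is correct, and it is essentially the same as the paper's: both locate a row-minimal vertex in the leftmost column occupied by the cycle and produce a chord at cyclic distance $3$ by tracking the forced row inequalities along the next few edges, yielding exactly the two chord patterns of the paper's Figure~2. Your version merely unifies the paper's split into "cycle contained in some $H_k$" versus "cycle spanning three columns" into a single case analysis at the lexicographically minimal vertex.
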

\begin{proof}
    Let $C=(c_1,c_2,\dots,c_{2s+1}=c_1)$ be a primitive cycle in $H$ with $s\ge 2$. By abuse of notation, if $k>2s$, we will identify $c_k$ with $c_{k-2s}$, and if $k<1$, we will identify $c_k$ with $c_{k+2s}$. We have the following two cases. \Cref{Fig:3_4} is helpful in understanding the arguments.
    \begin{enumerate}[a]
        \item \label{pf:chordal_bipartite_a}
            Suppose that $C$ is a cycle in $H_k$ for some $k$. Without loss of generality, we can assume that $k=1$ and $c_p=x_{i_p,j_p}$ for each $p$. Suppose also that $j_p=1$ if $p$ is odd, and $j_p=2$ if $p$ is even. We can also assume that $i_1=\min\{i_1,i_3,\dots,i_{2s-1}\}$. Then, $i_1<i_2$ and $i_1<i_{2s}$. By symmetry, we assume that $i_{2s}<i_{2}$. Note that $i_{2s-1}<i_{2s}$ at this time. Therefore, if $s>2$, then $C$ has a chord $\{x_{i_{2s-1},1},x_{i_2,2}\}$, which violates the primitivity of $C$.

        \item \label{pf:chordal_bipartite_b} 
            Suppose there is no $k$ such that $C$ is a cycle in $H_k$. Then, without loss of generality, we can assume that $c_1=x_{i_1,1}$. For each $k$ with $c_k=x_{i_k,1}$, we say that $k$ is \emph{marginal}. Note that in this case,
            we have $c_{k\pm 1}=x_{i_{k\pm 1},2}$ with $i_{k-1}\ne i_{k+1}$.  
            If $i_{k-1}<i_{k+1}$, we  say that $k$ is \emph{extendable} if $c_{k+2}=x_{i_{k+2},3}$. If instead $i_{k-1}>i_{k+1}$, we  say that $k$ is \emph{extendable} if $c_{k-2}=x_{i_{k-2},3}$. 
            \begin{enumerate}[i]
                \item Suppose  there is a marginal $k$ that is extendable. Without loss of generality, we assume that $i_{k-1}<i_{k+1}$. Note that  in this case $i_k<i_{k-1}<i_{k+1}<i_{k+2}$. If $s>2$, then $C$ has a chord $\{x_{i_{k-1},2},x_{i_{k+2},3}\}$,  which violates the primitivity of $C$.
                \item Suppose that there is no marginal $k$ that is extendable. Then $C$ is a cycle in $H_1$, a contradiction.  
            \end{enumerate}
    \end{enumerate}
    In short, $s=2$ and $C$ is a cycle of length $4$. \qedhere
\end{proof}

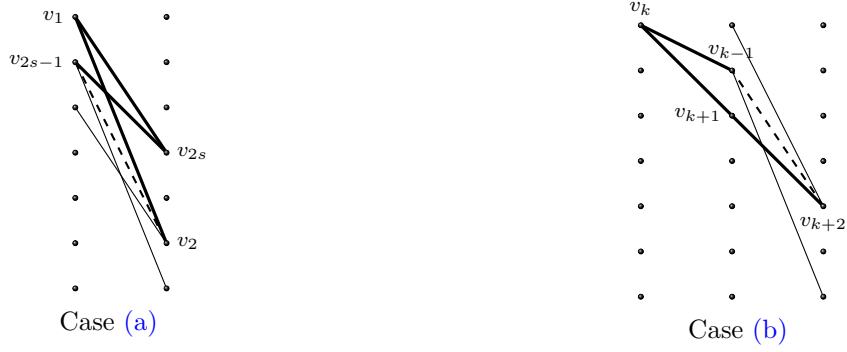
\begin{figure}[htbp]
    \begin{minipage}{0.48\textwidth}
        \centering
        \begin{tikzpicture}[thick, scale=0.6, every node/.style={scale=0.98}]]
            \draw[very thick](0,6)--(2,1);
            \draw[thin](0,4)--(2,1);
            \draw[dashed](0,5)--(2,1);
            \draw[very thick](0,6)--(2,3);
            \draw[very thick](0,5)--(2,3);
            \draw[thin](0,5)--(2,0);

            \shade [shading=ball, ball color=black] (0,0) circle (.07);
            \shade [shading=ball, ball color=black] (0,1) circle (.07);
            \shade [shading=ball, ball color=black] (0,2) circle (.07);
            \shade [shading=ball, ball color=black] (0,3) circle (.07);
            \shade [shading=ball, ball color=black] (0,4) circle (.07);
            \shade [shading=ball, ball color=black] (0,5) circle (.07) node [left] {\scriptsize$v_{2s-1}$};
            \shade [shading=ball, ball color=black] (0,6) circle (.07) node [left] {\scriptsize$v_{1}$};
            \shade [shading=ball, ball color=black] (2,0) circle (.07);
            \shade [shading=ball, ball color=black] (2,1) circle (.07) node [right] {\scriptsize$v_{2}$};
            \shade [shading=ball, ball color=black] (2,2) circle (.07);
            \shade [shading=ball, ball color=black] (2,3) circle (.07) node [right] {\scriptsize$v_{2s}$};
            \shade [shading=ball, ball color=black] (2,4) circle (.07);
            \shade [shading=ball, ball color=black] (2,5) circle (.07);
            \shade [shading=ball, ball color=black] (2,6) circle (.07);
        \end{tikzpicture}
        \subcaption*{Case \ref{pf:chordal_bipartite_a}}
    \end{minipage}\hfill
    \begin{minipage}{0.48\textwidth}
        \centering
        \begin{tikzpicture}[thick, scale=0.6, every node/.style={scale=0.98}]]
            \draw[very thick](0,6)--(2,5);
            \draw[very thick](0,6)--(2,4);
            \draw[very thick](4,2)--(2,4);
            \draw[thin](4,2)--(2,6);
            \draw[thin](4,0)--(2,5);
            \draw[dashed](4,2)--(2,5);

            \shade [shading=ball, ball color=black] (0,0) circle (.07);
            \shade [shading=ball, ball color=black] (0,1) circle (.07);
            \shade [shading=ball, ball color=black] (0,2) circle (.07);
            \shade [shading=ball, ball color=black] (0,3) circle (.07);
            \shade [shading=ball, ball color=black] (0,4) circle (.07);
            \shade [shading=ball, ball color=black] (0,5) circle (.07);
            \shade [shading=ball, ball color=black] (0,6) circle (.07) node [above] {\scriptsize$v_{k}$};
            \shade [shading=ball, ball color=black] (2,0) circle (.07);
            \shade [shading=ball, ball color=black] (2,1) circle (.07);
            \shade [shading=ball, ball color=black] (2,2) circle (.07);
            \shade [shading=ball, ball color=black] (2,3) circle (.07);
            \shade [shading=ball, ball color=black] (2,4) circle (.07) node [left] {\scriptsize$v_{k+1}$};
            \shade [shading=ball, ball color=black] (2,5) circle (.07) node [above] {\scriptsize$v_{k-1}$};
            \shade [shading=ball, ball color=black] (2,6) circle (.07);
            \shade [shading=ball, ball color=black] (4,0) circle (.07);
            \shade [shading=ball, ball color=black] (4,1) circle (.07);
            \shade [shading=ball, ball color=black] (4,2) circle (.07) node [below] {\scriptsize$v_{k+2}$};
            \shade [shading=ball, ball color=black] (4,3) circle (.07);
            \shade [shading=ball, ball color=black] (4,4) circle (.07);
            \shade [shading=ball, ball color=black] (4,5) circle (.07);
            \shade [shading=ball, ball color=black] (4,6) circle (.07);
        \end{tikzpicture}
        \subcaption*{Case \ref{pf:chordal_bipartite_b}}
    \end{minipage}
    \caption{Basic patterns}
    \label{Fig:3_4}
\end{figure}

The following result is indispensable for establishing the regularity result in \Cref{thm:power_reg_path}. It also provides a class of nice ideals sought in \cite[Section 1]{MR1477608}. 

\begin{Theorem}
    \label{lem:sagbi_path}
    Under the assumptions in \Cref{set:path},
    we have $(\ini_{\tau}(\calJ_{K_m,P_n}))^t=\ini_{\tau}(\calJ_{K_m,P_n}^t)$ for all $t\ge 1$.
\end{Theorem}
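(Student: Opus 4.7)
The easy containment $(\ini_\tau \calJ_{K_m,P_n})^t \subseteq \ini_\tau(\calJ_{K_m,P_n}^t)$ follows immediately from $\ini_\tau(f_1)\cdots\ini_\tau(f_t) = \ini_\tau(f_1\cdots f_t)$ for any $f_1,\dots,f_t \in \calJ_{K_m,P_n}$. For the reverse, my plan is to exhibit $\{p_{(e,f)}\,T\}$ as a Sagbi basis of the Rees algebra $\calR(\calJ_{K_m,P_n}) \subseteq S[T]$ over $S$, following the framework of Conca--Herzog--Valla \cite{MR1477608}. By the standard Sagbi lifting criterion, this equivalence reduces the problem to checking that every generator of the presentation ideal of $\calR(\ini_\tau \calJ_{K_m,P_n}) = \calR(I(H))$ admits a lift to a relation of $\calR(\calJ_{K_m,P_n})$ whose initial monomial lies in the corresponding power of $\ini_\tau\calJ_{K_m,P_n}$.

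By \Cref{lem:defining_ideal}, that presentation ideal is generated by the $T$-linear syzygies in $J_1$ together with the binomials from primitive cycles, and by \Cref{lem:chordal_bipartite} every primitive cycle has length four. The $J_1$ generators lift automatically from the Gr\"obner basis property established by Rauh \cite[Theorem 2]{MR3011436}: since each S-polynomial of the $p_{(e,f)}$ reduces to zero, Schreyer's construction produces $S$-syzygies of the $p_{(e,f)}$ whose initials are the prescribed Taylor syzygies on the monomials $\ini_\tau(p_{(e,f)})$.

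For the quadratic binomials indexed by primitive 4-cycles, I handle the two configurations from the proof of \Cref{lem:chordal_bipartite}. If the cycle lies inside some $H_k$, then its four vertices involve four distinct row indices $a<b<c<d$, with rows $\{a,b\}$ in column $k$ and rows $\{c,d\}$ in column $k+1$; the lift is supplied by the classical Pl\"ucker relation
\[
    p_{(\{a,c\},\{k,k+1\})}\,p_{(\{b,d\},\{k,k+1\})} - p_{(\{b,c\},\{k,k+1\})}\,p_{(\{a,d\},\{k,k+1\})} = p_{(\{a,b\},\{k,k+1\})}\,p_{(\{c,d\},\{k,k+1\})},
\]
whose right-hand side has initial $(x_{a,k}x_{b,k+1})(x_{c,k}x_{d,k+1}) \in (\ini_\tau\calJ_{K_m,P_n})^2$. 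If instead the cycle spans three consecutive columns $k,k+1,k+2$ (one vertex in each outer column and two in the middle), then for the canonical labeling $a<b<c<d$ the direct expansion of
\[
    p_{(\{a,b\},\{k,k+1\})}\,p_{(\{c,d\},\{k+1,k+2\})} - p_{(\{a,c\},\{k,k+1\})}\,p_{(\{b,d\},\{k+1,k+2\})}
\]
leaves six surviving terms after the two leading summands cancel, whose largest under $\tau$ is $x_{a,k}x_{b,k+1}x_{c,k+2}x_{d,k+1}$; this factors as $(x_{a,k}x_{d,k+1})(x_{b,k+1}x_{c,k+2})$ and hence lies in $(\ini_\tau\calJ_{K_m,P_n})^2$.

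The main obstacle will be the three-column case above, since it is not an instance of a standard Pl\"ucker relation; the delicate bookkeeping of the six surviving monomials, the identification of the leading term under the specific lex order $\tau$, and its factorization as a product of two initial generators must be carried out explicitly. Once these lifts are in place, termination of the Sagbi subduction follows from the fact that $\tau$ is a well-order, and the Sagbi basis criterion yields the desired equality $(\ini_\tau\calJ_{K_m,P_n})^t = \ini_\tau(\calJ_{K_m,P_n}^t)$ for every $t \ge 1$.
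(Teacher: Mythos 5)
Your overall strategy coincides with the paper's: reduce the statement to the assertion that the natural generators form a Sagbi basis of the Rees algebra with respect to the extended order on $S[T]$, and then lift the generators of the presentation ideal of $\calR(I(H))$, which by \Cref{lem:defining_ideal} and \Cref{lem:chordal_bipartite} are the linear syzygies together with the binomials of primitive $4$-cycles. Your treatment of the linear part via the Gr\"obner basis property of the $p_{(e,f)}$ (standard representations of the $S$-polynomials, multiplied by $T$) is a legitimate shortcut for the paper's explicit identities \eqref{eqn:check_sagbi_1}--\eqref{eqn:check_sagbi_3}, and your Pl\"ucker relation for a $4$-cycle contained in a single $H_k$ is exactly the paper's \eqref{eqn:check_sagbi_5}.

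The gap is in the three-column case. You correctly compute that, after the two leading summands cancel, the leading monomial of $p_{(\{a,b\},\{k,k+1\})}p_{(\{c,d\},\{k+1,k+2\})}-p_{(\{a,c\},\{k,k+1\})}p_{(\{b,d\},\{k+1,k+2\})}$ is $x_{a,k}x_{b,k+1}x_{c,k+2}x_{d,k+1}=(x_{a,k}x_{d,k+1})(x_{b,k+1}x_{c,k+2})\in(\ini_\tau\calJ_{K_m,P_n})^2$, but this is only the first step of the subduction. The lifting criterion of \cite[Proposition 1.1]{MR1390693} demands a representation of the \emph{whole} element as a sum of products of generators, each with initial monomial bounded by that of the element; equivalently, every remainder produced by the subduction must again have its leading monomial in $(\ini_\tau\calJ_{K_m,P_n})^2$ until the process reaches zero. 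Well-ordering of $\tau$ only guarantees that the subduction cannot run forever; it does not rule out getting stuck at a nonzero remainder of $\calJ_{K_m,P_n}^2$ whose leading monomial fails to factor as a product of two edge monomials of $H$ --- and ruling that out is precisely the content of the condition being verified (assuming it here would amount to assuming $\ini_\tau(\calJ_{K_m,P_n}^2)=(\ini_\tau\calJ_{K_m,P_n})^2$ in degree four). The paper closes this by exhibiting the complete identity \eqref{eqn:check_sagbi_4}, which writes the element as a signed sum of four products of generators whose initial monomials are pairwise distinct and hence all bounded by the largest one; you need either that identity or an explicit verification that every subsequent subduction step succeeds.
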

\begin{proof}
    We define a term order $\tau'$ on $S[T]$ as follows: for any monomials $u$ and $v$ in $S$, and non-negative integers $i$ and $j$, we set
    \[
        uT^i <_{\tau'} vT^j \qquad \Leftrightarrow \qquad i<j\quad \text{or}\quad i=j \quad \text{and} \quad u<_{\tau} v.
    \]
    It follows from \cite[Theorem 2.7]{MR1390693} that $(\ini_{\tau}(\calJ_{K_m,P_n}))^t=\ini_{\tau}(\calJ_{K_m,P_n}^t)$  for all $t\ge 1$, if and only if $\calR(\ini_{\tau}(\calJ_{K_m,P_n}))=\ini_{\tau'}(\calR(\calJ_{K_m,P_n}))$. In other words, the set 
    \[
        \calG\coloneqq \bdX \cup \left\{[i,j\,|\,k,k+1]T:1\le i<j\le m\text{ and }k\in [n-1]\right\} 
    \]
    forms a \emph{Sagbi basis} of $\calR(\calJ_{K_m,P_n})$ with respect to $\tau'$. 
    For brevity, write $\calG=\{g_1,\dots,g_p\}$, and let
    $F_1,\dots,F_q$ be a system of binomial relations of the affine semigroup ring $\KK[\ini_{\tau'}(g):g\in \calG]$. It follows from \cite[Proposition 1.1]{MR1390693} that we have to find $\lambda_\alpha^{(j)}\in \KK$ such that 
    \[
        F_j(g_1,\dots,g_p)=\sum_{\bdalpha}\lambda_\alpha^{(j)}g^{\bdalpha} \qquad \text{with $\ini_{\tau'}(g^{\bdalpha})\le \ini_{\tau'}(F_j(g_1,\dots,g_p))$}
    \]
    for all $\lambda_{\bdalpha}^{(j)}\ne 0$, where, as usual, $g^{\bdalpha}\coloneqq g_1^{\alpha_1}\cdots g_p^{\alpha_p}$ for a multi-index $\bdalpha=(\alpha_1,\dots,\alpha_p)$. Note that to verify this condition, it suffices to show that all the $\ini_{\tau'}(g^{\bdalpha})$ are distinct. We can check with ease  that this is satisfied in the following \cref{eqn:check_sagbi_1,eqn:check_sagbi_2,eqn:check_sagbi_3,eqn:check_sagbi_4,eqn:check_sagbi_5}.

    Note that $\KK[\ini_{\tau'}(g):g\in \calG]\cong \calR(I(H))$ for the graph $H$ introduced in \Cref{set:path}. It follows from \Cref{lem:defining_ideal} that we have the following two cases:
    \begin{enumerate}[A]
        \item First, suppose that $F=F_j$ is a binomial generator in $J_1$. Since the edge ideal $I(H)$ is
            quadratic, we have two subcases. 
            \begin{enumerate}[i]
                \item Suppose $F=uvT_{e_2}-u'v'T_{e_1}$ such that $e_1\coloneqq\{u,v\}$, $e_2\coloneqq\{u',v'\}$ and $e_1\cap e_2=\emptyset$. We can assume that $u=x_{i,k}$, $v=x_{j,k+1}$, $u'=x_{i',k'}$  and $v'=x_{j',k'+1}$ with $1\le i<j\le m$, $1\le i'<j'\le m$, and $k,k'\in [n-1]$. In this subcase, we have the simple equality:
                    \begin{align}
                        &x_{i,k}x_{j,k+1}[i',j'\,|\,k',k'+1]-x_{i',k'}x_{j',k'+1}[i,j\,|\,k,k+1] \notag \\
                        =\, &x_{i,k+1}x_{j,k}[i',j'\,|\,k',k'+1]-x_{i',k'+1}x_{j',k'}[i,j\,|\,k,k+1]. \label{eqn:check_sagbi_1}
                    \end{align}
                \item Suppose  $uT_{e_2}-vT_{e_1}$ such that $e_1\coloneqq \{u,r\}$ and $e_2\coloneqq \{v,r\}$. We have three additional subcases.
                    \begin{enumerate}[a]
                        \item Suppose  $r=x_{i_1,k}$, $u=x_{i_2,k+1}$, and $v=x_{i_3,k+1}$, with $1\le i_1<i_2<i_3\le m$ and $k\in [n-1]$. In this subcase, we have the simple equality:
                            \begin{equation}
                                \qquad
                                \qquad
                                \qquad
                                x_{i_2,k} [i_1,i_3\,|\,k,k+1]- x_{i_1,k} [i_2,i_3\,|\,k,k+1]= x_{i_3,k} [i_1,i_2\,|\,k,k+1].
                                \label{eqn:check_sagbi_2}
                            \end{equation}
                        \item Suppose  $r=x_{i_1,k+1}$, $u=x_{i_2,k}$, and $v=x_{i_3,k}$, where $1\le i_1<i_2<i_3\le m$ and $k\in [n-1]$. This subcase is similar to the previous one. 
                        \item Suppose  $u=x_{i_1,k}$, $r=x_{i_2,k+1}$, and $v=x_{i_3,k+2}$, with $1\le i_1<i_2<i_3\le m$ and $k\in [n-2]$. In this subcase, we have the simple equality:
                            \begin{align}
                                &x_{i_3,k+2}[i_1,i_2\,|\,k,k+1]-x_{i_1,k}[i_2,i_3\,|\,k+1,k+2] \notag \\
                                =\, &x_{i_3,k}[i_1,i_2\,|\,k+1,k+2]+x_{i_2,k+2}[i_1,i_3\,|\,k,k+1]\notag \\
                                &- x_{i_2,k}[i_1,i_3\,|\,k+1,k+2]-x_{i_1,k+2}[i_2,i_3\,|\,k,k+1].
                                \label{eqn:check_sagbi_3}
                            \end{align}
                    \end{enumerate} 
            \end{enumerate} 
        \item \label{lem:sagbi_path_B}
            Second, we assume that $F=T_{C^{+}}-T_{C^{-}}$ for some primitive cycle $C$. It follows from \Cref{lem:chordal_bipartite} that we can assume that $C=(c_1,c_2,c_3,c_4,c_5=c_1)$ in $H$.  Without loss of generality, we may assume that $c_1=x_{i_1,1}$.
            Then, we have two subcases.

            \begin{enumerate}[i]
                \item Suppose that $C$ is not a cycle in $H_1$. In this subcase, we can assume that $c_2=x_{i_2,2}$, $c_{3}=x_{i_3,3}$, and $c_4=x_{i_4,2}$ such that $1\le i_1<i_2<i_4<i_3\le m$.  Now, we have the simple equality:
                    \begin{align}
                        &[i_1,i_2\,|\,1,2][i_4,i_3\,|\,2,3]-[i_1,i_4\,|\,1,2][i_2,i_3\,|\,2,3] \notag \\ 
                        = \,  &-[i_1,i_2\,|\,2,3][i_4,i_3\,|\,1,2]+[i_1,i_4\,|\,2,3][i_2,i_3\,|\,1,2] \notag \\
                        &-[i_2,i_4\,|\,1,2][i_1,i_3\,|\,2,3] -[i_2,i_4\,|\,2,3][i_1,i_3\,|\,1,2]. 
                        \label{eqn:check_sagbi_4}
                    \end{align}

                \item Suppose that $C$ is a cycle in $H_1$. In this subcase, we can assume that $c_2=x_{i_2,2}$, $c_{3}=x_{i_3,1}$, and $c_4=x_{i_4,2}$ such that $1\le i_1<i_3<i_2<i_4\le m$. Now, we have the well-known Pl\"ucker relation:
                    \begin{equation}
                        \qquad 
                        [i_1,i_2\,|\,1,2][i_3,i_4\,|\,1,2]-[i_1,i_4\,|\,1,2][i_3,i_2\,|\,1,2] = [i_1,i_3\,|\,1,2][i_2,i_4\,|\,1,2].
                        \label{eqn:check_sagbi_5}
                    \end{equation}
                    This completes the proof. \qedhere
            \end{enumerate}
    \end{enumerate} 
\end{proof}

In the previous proof, we introduced a term order $\tau'$ on $S[T]$, which is induced from the term order $\tau$ on $S$. We will adopt it from now on. In particular, we can talk about the initial algebra $\ini_{\tau'}(\calR(\calJ_{K_m,P_n}))$.

\begin{Corollary}
    \label{cor:CM} 
    \begin{enumerate}[a]
        \item The Rees algebras $\calR(\calJ_{K_m,P_n})$ and $\calR(\ini_\tau(\calJ_{K_m,P_n}))$ are normal Cohen--Macaulay domains. In particular, if $\Char(\KK)=0$, then $\calR(\calJ_{K_m,P_n})$ has rational singularities. If instead $\Char(\KK)>0$, then $\calR(\calJ_{K_m,P_n})$ is F-rational.

        \item \label{limitdepth_a} 
            Let $\calG=\{g_1,\ldots,g_q\}$ be the natural generating set of $\calJ_{K_m,P_n}$. Then $\calG$ is a Sagbi basis of the $\KK$-subalgebra $\KK[g_1,\ldots,g_q]$ of $S$ with respect to the lexicographic order $\tau$ on $S$, i.e.,
            \[
                \ini_\tau(\KK[g_1,\ldots,g_q])=\KK[\ini_\tau(g_1),\ldots,\ini_\tau(g_q) ].
            \]
        \item 
            The special fiber rings $\calF(\calJ_{K_m,P_n})$ and $\calF(\ini_\tau(\calJ_{K_m,P_n}))$ are normal Cohen--Macaulay domains. In particular, $\calF(\calJ_{K_m,P_n})$ has rational singularities if $\KK$ is of characteristic $0$, and $\calF(\calJ_{K_m,P_n})$ is F-rational if $\KK$ is of positive characteristic. 
        \item \label{cor:CM_analytic_spread}
            The analytic spreads of $\calJ_{K_m,P_n}$ and $\ini_\tau(\calJ_{K_m,P_n})$ coincide. They are given by
            \[
                \begin{cases}
                    n-1, &\text{if $m=2$,}\\
                    mn-3, & \text{if $m\ge 3$.}
                \end{cases}
            \]
        \item\label{cor:CM-d} The associated graded rings $\gr_{\calJ_{K_m,P_n}}(S)$ and  $\gr_{\ini_\tau(\calJ_{K_m,P_n})}(S)$ are  Cohen--Macaulay. 
    \end{enumerate} 
\end{Corollary}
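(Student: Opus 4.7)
My overall strategy is uniform across the parts: reduce each claim to the corresponding statement for the initial ideal $\ini_\tau(\calJ_{K_m,P_n}) = I(H)$, where $H$ is the chordal bipartite graph from \Cref{set:path} (by \Cref{rmk:graph_H} and \Cref{lem:chordal_bipartite}), and then transfer via the Sagbi deformation established in \Cref{lem:sagbi_path}. That theorem gives $\ini_{\tau'}\calR(\calJ_{K_m,P_n}) = \calR(I(H))$, so one obtains a flat one-parameter family whose special fiber is $\calR(I(H))$ and whose general fiber is $\calR(\calJ_{K_m,P_n})$; normality, Cohen--Macaulayness, and F-rationality (and rational singularities in characteristic zero via Boutot's theorem) ascend from the special fiber to the general fiber by standard flat deformation arguments. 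Parallel flat families will underlie the transfer arguments for $\calF$ and for the associated graded ring.

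For parts (a) and (c), I would first verify the desired properties on the toric side. Since $H$ is bipartite, $\calF(I(H)) = \KK[H]$ is the edge ring of a bipartite graph, hence a normal Cohen--Macaulay affine semigroup ring (by Simis--Vasconcelos--Villarreal and Hochster). Because $I(H)$ is normally torsion-free (as a bipartite edge ideal), $\calR(I(H))$ is also a normal Cohen--Macaulay domain. Sagbi degeneration then transfers these properties to $\calR(\calJ_{K_m,P_n})$ and $\calF(\calJ_{K_m,P_n})$. Part (b) is essentially built into \Cref{lem:sagbi_path}: the same $S$-pair reductions \eqref{eqn:check_sagbi_1}--\eqref{eqn:check_sagbi_5} that verify the Sagbi condition for $\calG = \bdX\cup\{g_i T\}$ restrict, after setting the $\bdX$-variables to constants (or restricting to $T$-degree one at a time), to a verification of the Sagbi condition for $\{g_1,\ldots,g_q\}$ inside $\KK[g_1,\ldots,g_q]$.

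For part (d), I would use that the analytic spread of an ideal equals the Krull dimension of its special fiber ring; by Sagbi flatness this dimension is the same for $\calJ_{K_m,P_n}$ and $I(H)$, so it suffices to compute $\dim \KK[H]$. Using the standard formula $\dim \KK[G] = |V(G)| - c(G)$ for a bipartite graph $G$ (counting only non-isolated vertices and the components containing them) together with the description of $H$ in \Cref{rmk:graph_H}: for $m\ge 3$ the non-isolated vertices form one connected bipartite component on $mn-2$ vertices, yielding dimension $mn-3$; for $m=2$, $H$ is a disjoint union of $n-1$ edges plus isolated vertices, each edge contributing $1$, yielding dimension $n-1$.

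Part (e) is the most delicate step, since Cohen--Macaulayness of $\calR(I)$ does not automatically imply Cohen--Macaulayness of $\gr_I(S)$. My plan is to use the short exact sequence $0\to I\calR(I)\to \calR(I)\to \gr_I(S)\to 0$ together with the Cohen--Macaulayness of $\calR(I(H))$ from (a) and of $\calF(I(H))$ from (c), exploiting that $I(H)$ is normally torsion-free (so ordinary and symbolic powers coincide, forcing $\Ass(S/I(H)^k)=\Ass(S/I(H))$), in order to invoke a Goto--Shimoda-type criterion for Cohen--Macaulayness of the associated graded ring. The main obstacle here is checking the numerical hypotheses of such a criterion (comparisons of analytic spread, height, and reduction number for $I(H)$), which I would handle by explicit K\"onig-type matching computations on $H$ using the description in \Cref{rmk:graph_H}. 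Once $\gr_{I(H)}(S)$ is shown to be Cohen--Macaulay, the Sagbi deformation of \Cref{lem:sagbi_path} produces the corresponding flat family at the $\gr$-level and transfers Cohen--Macaulayness to $\gr_{\calJ_{K_m,P_n}}(S)$.
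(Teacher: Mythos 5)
Parts (a)--(d) of your proposal follow essentially the paper's route: establish normality and Cohen--Macaulayness on the toric side ($I(H)$ a bipartite edge ideal, $\KK[H]$ a normal affine semigroup ring), transfer via the Sagbi degeneration of \Cref{lem:sagbi_path} using the Conca--Herzog--Valla machinery, and compute $\dim\KK[H]$ componentwise from \Cref{rmk:graph_H}. Your reading of part (b) as the ``$T$-degree'' shadow of the Sagbi verification is also correct in substance: the presentation ideal of $\KK[H]$ is generated by the primitive-cycle binomials (\Cref{lem:defining_ideal}(b)), and these are exactly the relations lifted in case~(B) of the proof of \Cref{lem:sagbi_path}, which is precisely what the paper cites.

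Part (e) is where you go wrong, and the error is in your opening premise: you assert that Cohen--Macaulayness of $\calR(I)$ ``does not automatically imply'' Cohen--Macaulayness of $\gr_I(S)$. It does. This is a classical result of Huneke (\emph{On the associated graded ring of an ideal}, 1982, Proposition 1.1, the reference \cite{MR638557} used in the paper): when the ambient ring is Cohen--Macaulay, $\calR(I)$ Cohen--Macaulay forces $\gr_I(S)$ Cohen--Macaulay, via the two exact sequences $0\to \calR(I)_+\to\calR(I)\to S\to 0$ and $0\to \calR(I)_+(1)\to\calR(I)\to\gr_I(S)\to 0$ and a depth chase. You have reversed the direction of the hard implication: it is the \emph{converse} ($\gr_I(S)$ CM $\Rightarrow$ $\calR(I)$ CM) that requires extra numerical hypotheses of Goto--Shimoda/Trung--Ikeda type (negativity of the $a$-invariant, reduction-number bounds). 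Consequently your plan for (e) --- invoking a Goto--Shimoda-type criterion, verifying its hypotheses by matching computations on $H$, and then transferring through an unspecified flat family ``at the $\gr$-level'' --- is both unnecessary and incomplete as written: the numerical hypotheses are never checked, and the Sagbi deformation of \Cref{lem:sagbi_path} is a statement about Rees algebras, not associated graded rings, so the claimed transfer would itself need an argument. The correct (and one-line) proof is to apply Huneke's result separately to $\calR(\calJ_{K_m,P_n})$ and to $\calR(\ini_\tau(\calJ_{K_m,P_n}))$, both already known to be Cohen--Macaulay from part (a).
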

\begin{proof}
    \begin{enumerate}[a]
        \item Since $\ini_\tau(\calJ_{K_m,P_n})=I(H)$ is the edge ideal of the bipartite graph $H$ constructed in \Cref{set:path}, the Rees algebra $\calR(\ini_\tau(\calJ_{K_m,P_n}))$ is normal by \cite[Proposition 10.5.8]{MR3362802}. It is then Cohen--Macaulay by \cite{MR304376}. Since $\ini_{\tau'}(\calR(\calJ_{K_m,P_n}))=\calR(\ini_\tau(\calJ_{K_m,P_n}))$ by \Cref{lem:sagbi_path} and \cite[Theorem 2.7]{MR1390693}, it follows from \cite[Corollary 2.3]{MR1390693}  that $\calR(\calJ_{K_m,P_n})$ has rational singularities if $\Char(\KK)=0$, and  $\calR(\calJ_{K_m,P_n})$ is F-rational if $\Char(\KK)>0$. Furthermore, the normality of $\ini_\tau(\calR(\calJ_{K_m,P_n}))$ implies that $\calR(\calJ_{K_m,P_n})$ is a normal Cohen--Macaulay domain again by \cite[Corollary 2.3]{MR1390693}. 
        \item This part follows from \Cref{lem:defining_ideal} and the part \ref{lem:sagbi_path_B} of the proof of  \Cref{lem:sagbi_path}. 
        \item It follows from the previous part \ref{limitdepth_a} that the special fiber ring $\calF(\ini_\tau(\calJ_{K_m,P_n}))=\ini_\tau(\calF(\calJ_{K_m,P_n}))$ is isomorphic to the edge subring  $\KK[H]$ of the graph $H$ constructed in Setting \ref{set:path}. Since $H$ is bipartite, $\KK[H]$ is  Koszul, normal and Cohen--Macaulay by \cite[Theorem 1]{MR1657721} and \cite[Proposition 10.3.1]{MR3362802}. The remaining parts follow again from \cite[Corollary 2.3]{MR1390693}.
        \item Recall that the \emph{analytic spread} of a graded ideal $I$ is the (Krull) dimension of the special fiber ring $\calF(I)$. By \cite[Proposition 2.4]{MR1390693}, the Hilbert functions of $\calF(\calJ_{K_m,P_n})$ and $\ini_\tau(\calF(\calJ_{K_m,P_n}))$ coincide. Therefore, they have the same dimension. Meanwhile, $\ini_\tau(\calF(\calJ_{K_m,P_n}))=\calF(\ini_\tau(\calJ_{K_m,P_n}))$ by \ref{limitdepth_a}. Thus, the analytic spreads of $\calJ_{K_m,P_n}$ and $\ini_\tau(\calJ_{K_m,P_n})$ coincide. Note that $\calF(\ini_\tau(\calJ_{K_m,P_n}))=\KK[H]$ and $H$ is a bipartite graph. Therefore, it remains to apply \Cref{rmk:graph_H} and \cite[Corollary 10.1.21]{MR3362802}.

        \item Since the Rees algebras $\calR(\calJ_{K_m,P_n})$ and $\calR(\ini_\tau(\calJ_{K_m,P_n}))$ are Cohen--Macaulay, this part follows from \cite[Proposition 1.1]{MR638557}. \qedhere
    \end{enumerate} 
\end{proof}

Let $I$ be an ideal in $S$. Recall that $I$ satisfies the \emph{persistence property} if and only if the sets of associated primes satisfy $\Ass(I^t) \subseteq \Ass(I^{t+1})$ for all $t\ge 1$. In addition, the ideal $I$ has the \emph{strong persistence property} if and only if $I^{t+1} : I = I^t$ for all $t\ge 1$. It is not difficult to see that the strong persistence property implies the persistence property. On the other hand, the converse is also known to be false.

\begin{Corollary}
    Under the assumptions in \Cref{set:path}, the quadratic ideals
    $\ini_\tau(\calJ_{K_m,P_n})$ and $\calJ_{K_m,P_n}$ satisfy the strong persistence property. In particular, both $\ini_\tau(\calJ_{K_m,P_n})$ and $\calJ_{K_m,P_n}$ satisfy the persistence property.
\end{Corollary}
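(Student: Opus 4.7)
The plan is to invoke a classical theorem of Ratliff and Rush to reduce the strong persistence identity $I^{t+1}:I=I^t$ to a depth condition on the associated graded ring. Recall that for an ideal $I$ containing a nonzerodivisor, the \emph{Ratliff--Rush closure} of $I^t$ is $\widetilde{I^t}:=\bigcup_{k\ge 1}(I^{t+k}:I^k)$; choosing $k=1$ gives $I^{t+1}:I\subseteq\widetilde{I^t}$, so the equality $\widetilde{I^t}=I^t$ for every $t\ge 1$ would imply the strong persistence property (the reverse inclusion $I^t\subseteq I^{t+1}:I$ being automatic). The form of the Ratliff--Rush theorem to be used here asserts that this equality holds for all $t\ge 1$ whenever $\gr_I(S)$ has positive depth with respect to its irrelevant maximal ideal.

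The input is provided directly by \Cref{cor:CM-d}: both $\gr_{\calJ_{K_m,P_n}}(S)$ and $\gr_{\ini_\tau(\calJ_{K_m,P_n})}(S)$ are Cohen--Macaulay, and their Krull dimensions coincide with $\dim S=mn\ge 4$. A positive-dimensional Cohen--Macaulay graded algebra over a field has depth equal to dimension, so the irrelevant maximal ideal of each contains a nonzerodivisor. This places us in the hypothesis of the Ratliff--Rush theorem, and the strong persistence property follows at once for both $\calJ_{K_m,P_n}$ and $\ini_\tau(\calJ_{K_m,P_n})$.

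For the ``in particular'' conclusion, the implication ``strong persistence $\Rightarrow$ persistence'' is the standard one alluded to in the discussion preceding the corollary. Given $P\in\Ass(S/I^t)$, write $P=I^t:f$ with $f\notin I^t$; the identity $I^{t+1}:I=I^t$ forces the existence of some $g\in I$ with $fg\notin I^{t+1}$, since otherwise $fI\subseteq I^{t+1}$ would place $f\in I^{t+1}:I=I^t$. One then has $P\cdot fg\subseteq I^{t+1}$, and localizing at $P$ turns the inclusion $PS_P\subseteq\Ann_{S_P/I^{t+1}S_P}(\overline{fg})$ into an equality by the local nature of $S_P$, yielding $P\in\Ass(S/I^{t+1})$. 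Consequently, the only non-routine ingredient in the whole argument is the Cohen--Macaulayness of $\gr_I(S)$, which is already supplied by \Cref{cor:CM-d}; verifying that depth is to be measured against the graded irrelevant ideal is the only minor bookkeeping point, and it is immediate for finitely generated graded algebras over a field.
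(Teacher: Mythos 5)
Your argument is correct in substance, but it follows a genuinely different route from the paper's. The paper proves the corollary in two steps: first, $\ini_\tau(\calJ_{K_m,P_n})=I(H)$ is the edge ideal of a graph, and edge ideals satisfy $I(H)^{t+1}:I(H)=I(H)^t$ by \cite[Lemma 7.7.11]{MR3362802}; second, the identity $\ini_\tau(\calJ_{K_m,P_n}^t)=(\ini_\tau(\calJ_{K_m,P_n}))^t$ of \Cref{lem:sagbi_path} transfers strong persistence from the initial ideal to $\calJ_{K_m,P_n}$ itself via \cite[Proposition 3.13]{MR4425287}. You instead deduce $I^{t+1}:I\subseteq\widetilde{I^t}=I^t$ from Ratliff--Rush theory together with the Cohen--Macaulayness of the associated graded rings supplied by \Cref{cor:CM}\ref{cor:CM-d}. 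This treats both ideals uniformly and replaces the combinatorial input about edge ideals by a classical commutative-algebra fact; the paper's route is shorter because the two external results it cites do all the work.

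One point in your reduction deserves care. The implication you need is: if the irrelevant ideal $\gr_I(S)_+=\bigoplus_{k\ge1}I^k/I^{k+1}$ contains a homogeneous nonzerodivisor of positive degree $d$, say the initial form $a^*$ of some $a\in I^d$, then $I^{t+kd}:a^k=I^t$ for all $t,k\ge1$, whence $\widetilde{I^t}\subseteq\bigcup_{k}\left(I^{t+kd}:a^k\right)=I^t$. Positive depth of $\gr_I(S)$ with respect to its maximal homogeneous ideal only guarantees some homogeneous nonzerodivisor, which for a non-primary ideal $I$ could a priori sit in degree $0$; the hypothesis that actually drives the conclusion is $\grade(\gr_I(S)_+)\ge1$. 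Your Cohen--Macaulay input rescues this: in a Cohen--Macaulay ring grade equals height, and $\height(\gr_I(S)_+)=\dim S-\dim(S/I)=\height(I)\ge1$ for both ideals in question. With that adjustment the argument is complete; your closing derivation of persistence from strong persistence is the standard one, though it is cleaner to localize at $P$ at the outset so that the element $g\in I$ with $fg\notin I^{t+1}$ is chosen after localization.
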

\begin{proof}
    Since $\ini_\tau(\calJ_{K_m,P_n})=I(H)$ is the edge ideal of the  bipartite graph $H$ constructed in \Cref{set:path},  $\ini_\tau(\calJ_{K_m,P_n})$ satisfies the strong persistence property by \cite[Lemma 7.7.11]{MR3362802}. Furthermore, by \Cref{lem:sagbi_path}, we have $\ini_{\tau}(\calJ_{K_m,P_n}^t)= (\ini_{\tau}(\calJ_{K_m,P_n}))^t$ for all $t\ge 1$. Thus, $\calJ_{K_m,P_n}$ also satisfies the strong persistence property by \cite[Proposition 3.13]{MR4425287}.
\end{proof}

\begin{Remark}
    The Sagbi-basis property mentioned in \Cref{lem:sagbi_path} is not a common characteristic for generalized binomial edge ideals. Even for the complete graph $K_3$, one has $\ini_\tau(\calJ_{K_3,K_3}^2)\ne (\ini_\tau(\calJ_{K_3,K_3}))^2$. In other words, the natural generators of the generalized binomial edge ideal $\calJ_{K_3,K_3}$ do not form a Sagbi basis under the given term order. 
\end{Remark}

As further applications of \Cref{lem:sagbi_path}, we study the regularity and the depth of powers of $\calJ_{K_m,P_n}$ in \Cref{thm:power_reg_path} and \Cref{limitdepth} respectively. 
In particular, we give a closed formula of the regularities.
To achieve it, we apply the following lemma: 

\begin{Lemma}
    [{\cite[Theorem 4.4(2)]{MR4210998}}]
    \label{lem:reg_cochord}
    Let $G$ be a graph. Then, for all $t\ge 1$  we have
    \[
        \reg(I(G)^t)\le 2t+\cochord(G)-1.
    \]
\end{Lemma}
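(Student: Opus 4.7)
The plan is to induct on $t$, using Woodroofe's single-power bound $\reg(I(G)) \le \cochord(G) + 1$ as the base case $t=1$ and Banerjee's even-connection machinery for the inductive step. Woodroofe's bound is itself proved by a secondary induction on $k = \cochord(G)$: write $E(G) = E(G_1) \sqcup E(H)$ with $G_1$ cochordal and $\cochord(H) = k-1$, then exploit the short exact sequence
\[
0 \to S/(I(H) : I(G_1)) \to S/I(H) \to S/(I(G_1)+I(H)) \to 0,
\]
and control each term using Fr\"oberg's characterization (cochordal $\iff$ edge ideal has linear resolution) together with the inductive hypothesis on $H$.

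For the inductive step $t \rightsquigarrow t+1$, I would invoke Banerjee's fundamental inequality
\[
\reg(I(G)^{t+1}) \le \max\bigl\{\reg(I(G)^{t+1}:u)+2t,\ \reg(I(G)^t)+2\bigr\},
\]
where $u$ ranges over the minimal monomial generators of $I(G)^t$. By the inductive hypothesis, the second argument of the max is at most $2(t+1) + \cochord(G) - 1$. For the first argument, Banerjee's even-connection lemma identifies $I(G)^{t+1}:u$ with the edge ideal $I(G_u^{(t)})$ of the graph $G_u^{(t)}$ obtained from $G$ by adjoining all chords between pairs of vertices evenly connected through $u$, and shows moreover that this colon ideal is generated in degree two. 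Applying Woodroofe's bound to $G_u^{(t)}$ and substituting would give the desired estimate.

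The crucial technical ingredient, and the main obstacle, is to verify that $\cochord(G_u^{(t)}) \le \cochord(G)$, that is, passing to the even-connection graph does not inflate the cochordal cover number. The strategy would be to start with a minimum cochordal cover $E(G) = E_1 \cup \cdots \cup E_k$ and show that every new chord $\{x,y\} \in E(G_u^{(t)}) \setminus E(G)$ can be absorbed into some $E_i$ while preserving chordality of the complement of the induced piece. This requires a careful case analysis driven by the structure of the witnessing even walk in $G$, because in general the union of two cochordal graphs need not be cochordal and even-connection can introduce many new edges at once. Granting this preservation claim, Woodroofe's bound yields $\reg(I(G_u^{(t)})) \le \cochord(G) + 1$, so the first term of Banerjee's maximum is at most $(\cochord(G) + 1) + 2t = 2(t+1) + \cochord(G) - 1$, which closes the induction and establishes $\reg(I(G)^t) \le 2t + \cochord(G) - 1$ for all $t \ge 1$.
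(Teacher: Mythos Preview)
The paper does not prove this lemma; it is quoted from \cite[Theorem 4.4(2)]{MR4210998} and used as a black box in the proof of \Cref{thm:power_reg_path}. There is thus no in-paper argument to compare your proposal against.

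That said, your outline follows the standard route and matches the structure of the proof in the cited source: induct on $t$, use Woodroofe's bound $\reg(I(G))\le\cochord(G)+1$ as the base, and for the inductive step combine Banerjee's inequality
\[
\reg(I(G)^{t+1})\le\max\bigl\{\reg(I(G)^{t+1}:u)+2t,\ \reg(I(G)^t)+2\bigr\}
\]
with the even-connection description of the colon ideals. Two remarks. First, $I(G)^{t+1}:u$ is generated in degrees at most two but may contain variables, so it is not literally an edge ideal; one passes to the quotient by those variables before invoking Woodroofe, which does not affect regularity but should be said. Second, and this is the substantive point, you have correctly isolated the crux---that the even-connection graph still has cochordal cover number at most $\cochord(G)$---but you do not prove it. ``Absorb each new chord into some $E_i$ while keeping the complement chordal'' is a plan, not an argument, and you yourself note that unions of cochordal graphs need not be cochordal. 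In the cited paper this step is a separate lemma, established by showing that adjoining the even-connection edges to each piece of a given cochordal cover preserves cochordality, verified through the perfect-elimination-order characterization of chordality on the complement. Without supplying (or citing) that lemma, what you have written is a correct roadmap rather than a proof.
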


In particular, we need to compute the co-chordal cover number of the  bipartite graph $H$ from \Cref{set:path}. For this purpose, recall that a graph $G$ is \emph{chordal} if every induced cycle in $G$ has length $3$. Relatedly, a \emph{perfect elimination order} of a graph $G$ is an order $v_1,\dots,v_n$ of its vertices such that for all $i\in [n]$, the neighbor $N_{G_i}(v_i)$ of $v_i$, restricted to the induced subgraph $G_i$ of $G$ on the set $\{v_{i},\ldots,v_n\}$, induces a complete subgraph in $G$. It is well-known that a graph is chordal if and only if it admits a perfect elimination order.

On the other hand, a graph $G$ is \emph{co-chordal} if its complement graph $G^\complement$ is chordal. The \emph{co-chordal cover number} of a graph $G$, denoted $\cochord(G)$, is the minimum number $k$ such that there exist co-chordal subgraphs $G_1, \ldots, G_k$ of $G$ with $E(G) = \bigcup_{i=1}^k E(G_i)$.

In the following, we compute the co-chordal cover number of the  bipartite graph $H$. An upper bound on this number is sufficient for our application.

\begin{Lemma}
    \label{lem:cochord} 
    For the graph $H$ introduced in \Cref{set:path}, one has $\cochord(H)\le n-1$.
\end{Lemma}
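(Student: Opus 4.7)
The plan is to use the natural decomposition of $E(H)$ by consecutive column pairs. Every edge of $H$ has the form $\{x_{i,j},x_{i',j+1}\}$ with $1\le i<i'\le m$, so it already lies in $E(H_j)$; hence $E(H)=\bigcup_{k=1}^{n-1}E(H_k)$ and it suffices to show that each $H_k$ is co-chordal, i.e.\ that $H_k^{\complement}$ is chordal.

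The key step is to exhibit a perfect elimination ordering on $V(H_k)=\{x_{1,k},\dots,x_{m,k},x_{1,k+1},\dots,x_{m,k+1}\}$. The candidate I would take is
\[
    x_{1,k},\,x_{2,k},\,\dots,\,x_{m,k},\,x_{m,k+1},\,x_{m-1,k+1},\,\dots,\,x_{1,k+1}.
\]
Two facts about $H_k^{\complement}$ drive the verification: the vertices of column $k$ form a clique, the vertices of column $k+1$ form a clique, and a crossing pair $\{x_{p,k},x_{q,k+1}\}$ is an edge of $H_k^{\complement}$ precisely when $p\ge q$.

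For $x_{i,k}$, after deleting $x_{1,k},\dots,x_{i-1,k}$, its neighborhood in the remaining complement is
\[
    \{x_{i',k}:i'>i\}\;\cup\;\{x_{i',k+1}:i'\le i\}.
\]
Pairs inside each column are edges of $H_k^{\complement}$ by the clique observation, and a crossing pair $\{x_{i',k},x_{i'',k+1}\}$ with $i'>i\ge i''$ satisfies $i'>i''$, so it is also an edge of $H_k^{\complement}$; hence the neighborhood is a clique. After all column-$k$ vertices are removed, for $x_{i,k+1}$ (taken in decreasing order of $i$) the remaining neighborhood is $\{x_{j,k+1}:j<i\}$, which is a clique inside column $k+1$. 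So the ordering is a perfect elimination ordering, $H_k^{\complement}$ is chordal, and $H_k$ is co-chordal, giving $\cochord(H)\le n-1$.

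The argument is essentially a careful bookkeeping of which crossing pairs are non-edges of $H_k$; the only point that requires attention is getting the inequalities right at the vertex $x_{i,k}$ (the step where column-$k$ and column-$(k+1)$ vertices mix in the neighborhood), but the choice of ordering is designed exactly so that this case reduces to $i'>i\ge i''\Rightarrow i'\ge i''$.
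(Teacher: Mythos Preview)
Your proof is correct and follows essentially the same approach as the paper: decompose $E(H)=\bigcup_{k=1}^{n-1}E(H_k)$ and show each $H_k$ is co-chordal by exhibiting a perfect elimination ordering on $H_k^{\complement}$. The only cosmetic difference is that the paper orders the column-$(k+1)$ vertices increasingly ($x_{1,k+1},\dots,x_{m,k+1}$) rather than decreasingly as you do; both orderings yield valid perfect elimination orders, and your verification is in fact more explicit than the paper's.
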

\begin{proof}
    It is clear that the induced subgraphs $H_1,H_2,\dots,H_{n-1}$ are  pairwise isomorphic. Furthermore, $E(H)=\bigcup_{k=1}^{n-1}E(H_k)$. It remains to show that $H_1$ is co-chordal. Notice that
    \begin{align*}
        E(H_1^\complement)&=\Set{\{x_{i,1},x_{j,1}\}:1\le i<j\le m} \cup \Set{\{x_{i,2},x_{j,2}\}:1\le i<j\le m} \\
        &\quad \cup \Set{\{x_{i,1},x_{j,2}\}:1\le j\le i \le m}.
    \end{align*}
    It can be directly verified  that $x_{1,1},x_{2,1},\dots,x_{m,1},x_{1,2},x_{2,2},\dots,x_{m,2}$ form a perfect elimination order. Therefore, $H_1^{\complement}$ is a chordal graph.
\end{proof}

The proof of the following \Cref{thm:power_reg_path} will also use the comparison in \Cref{power}. The argument for the latter is logically irrelevant to what is presented here. Therefore, we decide to postpone the discussion of \Cref{power} to a later section of our paper, in order to maintain the consistency and coherence of our presentation.

\begin{Theorem}
    \label{thm:power_reg_path}
    Under the assumptions in \Cref{set:path},
    we have
    \[
        \reg\left(\frac{S}{\calJ^t_{K_m,P_n}}\right)=\reg\left(\frac{S}{\ini_\tau(\calJ^t_{K_m,P_n})}\right)=2(t-1)+(n-1)
    \]
    for all $t\ge 1$.
\end{Theorem}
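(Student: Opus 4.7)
The plan is to sandwich both regularities between the value $2(t-1)+(n-1)$, using the Sagbi--basis identification of \Cref{lem:sagbi_path} for the upper bound, and the comparison statement \Cref{power} (which the authors prove independently in a later section) for the lower bound.

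First I would combine the classical inequality $\reg(S/I)\le\reg(S/\ini_\tau(I))$ with \Cref{lem:sagbi_path} and the identification $\ini_\tau(\calJ_{K_m,P_n})=I(H)$ from \Cref{rmk:graph_H}\textup{(a)} to obtain
\[
    \reg\!\left(\frac{S}{\calJ_{K_m,P_n}^t}\right) \;\le\; \reg\!\left(\frac{S}{\ini_\tau(\calJ_{K_m,P_n}^t)}\right) \;=\; \reg\!\left(\frac{S}{I(H)^t}\right).
\]
Next, applying \Cref{lem:reg_cochord} to the bipartite edge ideal $I(H)$ and invoking the estimate $\cochord(H)\le n-1$ from \Cref{lem:cochord} yields
\[
    \reg\!\left(\frac{S}{I(H)^t}\right) \;=\; \reg(I(H)^t)-1 \;\le\; 2t+(n-1)-2 \;=\; 2(t-1)+(n-1).
\]

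For the matching lower bound I would invoke \Cref{power}, which supplies
\[
    \reg\!\left(\frac{S}{\calJ_{K_m,P_n}^t}\right) \;\ge\; 2(t-1)+(n-1).
\]
Chaining everything together produces
\[
    2(t-1)+(n-1) \;\le\; \reg\!\left(\frac{S}{\calJ_{K_m,P_n}^t}\right) \;\le\; \reg\!\left(\frac{S}{\ini_\tau(\calJ_{K_m,P_n}^t)}\right) \;\le\; 2(t-1)+(n-1),
\]
which forces every inequality to be an equality and proves both claimed identities simultaneously.

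The hard part will be the lower bound. The comparison $\reg(S/I)\le\reg(S/\ini_\tau(I))$ only constrains $\reg(S/\calJ_{K_m,P_n}^t)$ from above, so once the Sagbi commutation (\Cref{lem:sagbi_path}) and the cochordal cover estimate (\Cref{lem:cochord}) are in place, the upper bound is essentially routine. By contrast, pushing $\reg(S/\calJ_{K_m,P_n}^t)$ \emph{up} to the target value requires an independent structural mechanism, which is precisely what \Cref{power} will provide, presumably through the combinatorial pure subring philosophy flagged in the introduction, transferring a lower bound from a combinatorially simpler model (such as $\calJ_{K_m,P_n}$ itself at $t=1$, where the equality $\reg(S/\calJ_{K_m,P_n})=n-1$ can be verified directly, combined with an asymptotic linearity argument in $t$) back to the non--monomial ideal $\calJ_{K_m,P_n}^t$.
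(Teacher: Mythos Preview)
Your overall sandwich and your upper-bound argument match the paper exactly: the chain
\[
\reg\!\left(\frac{S}{\calJ_{K_m,P_n}^t}\right)\le\reg\!\left(\frac{S}{\ini_\tau(\calJ_{K_m,P_n}^t)}\right)=\reg\!\left(\frac{S}{I(H)^t}\right)\le 2(t-1)+(n-1)
\]
via \Cref{lem:sagbi_path}, \Cref{lem:reg_cochord}, and \Cref{lem:cochord} is precisely what the authors do.

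The gap is in your lower bound. You correctly name \Cref{power} as the relevant tool, but \Cref{power} is only a comparison theorem between a pair $(G_1,G_2)$ and an induced sub-pair $(H_1,H_2)$; by itself it produces no numerical value. The paper instantiates it with $K_2\subset K_m$ (and $P_n\subseteq P_n$) to obtain
\[
\reg\!\left(\frac{S}{\calJ_{K_m,P_n}^t}\right)\ge\reg\!\left(\frac{S'}{\calJ_{K_2,P_n}^t}\right),
\]
and then quotes the known formula $\reg(S'/\calJ_{K_2,P_n}^t)=2(t-1)+(n-1)$ for the \emph{classical} binomial edge ideal of the path $P_n$, established by Ene, Herzog, and Hibi for closed graphs. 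Your speculative route through the $t=1$ value combined with an ``asymptotic linearity argument in $t$'' does not work: asymptotic linearity of the regularity only holds for $t\gg 0$ and gives no control for small $t$, so it cannot furnish the required lower bound for every $t\ge 1$. Once you replace that speculation by the reduction to $m=2$, your proof coincides with the paper's.
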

\begin{proof}
    It follows from  
    \cite[Theorem 3.1]{MR4425287}, \cite[Theorem 3.3.4]{MR2724673}, and \Cref{power} 
    that
    \[
        \reg\left(\frac{S}{\ini_\tau(\calJ^t_{K_m,P_n})}\right)\ge\reg\left(\frac{S}{\calJ^t_{K_m,P_n}}\right)\ge\reg\left(\frac{S}{\calJ^t_{K_2,P_n}}\right)=2(t-1)+(n-1) .
    \]
    Thus, it suffices to prove that $\reg({S}/{\ini_\tau(\calJ^t_{K_m,P_n})})\le 2(t-1)+(n-1)$.  
    To achieve this goal,
    we note that $\ini_\tau(\calJ^t_{K_m,P_n})= (\ini_\tau(\calJ_{K_m,P_n}))^t =I(H)^t$ by \Cref{lem:sagbi_path}.  Furthermore, it follows from \Cref{lem:reg_cochord,lem:cochord} that $\reg(S/I(H)^t)\le 2(t-1)+(n-1)$. So we are done. 
\end{proof}

Next, we determine the limit of the depth of powers of $\calJ_{K_m,P_n}$.

\begin{Theorem}
    \label{limitdepth}
    The following results hold under the assumptions in \Cref{set:path}:
    \begin{enumerate}[a]
        \item 
            \[
                \depth\left(\frac{S}{\calJ_{K_m,P_n}}\right)=\depth\left(\frac{S}{\ini_\tau(\calJ_{K_m,P_n})}\right)=n+(m-1);
            \]
        \item \label{limitdepth_c} 
            for each $t\ge 1$, we have
            \[
                \depth\left(\frac{S}{\ini_\tau(\calJ^t_{K_m,P_n})}\right)\ge \depth\left(\frac{S}{\ini_\tau(\calJ^{t+1}_{K_m,P_n})}\right);
            \]
        \item 
            \[
                \lim\limits_{t\to\infty} \depth\left(\frac{S}{\calJ^t_{K_m,P_n}}\right)= \lim\limits_{t\to\infty} \depth\left(\frac{S}{\ini_\tau(\calJ^t_{K_m,P_n})}\right)=
                \begin{cases}
                    3, & \text{if $m\ge 3$,}\\
                    n+1, & \text{if $m=2$.}
                \end{cases}
            \]
    \end{enumerate} 
\end{Theorem}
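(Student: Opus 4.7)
The theorem splits into three claims, which I would address in order.

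Part (a) is the combinatorial/homological base case. The inequality $\depth(S/\calJ_{K_m,P_n}) \ge \depth(S/\ini_\tau(\calJ_{K_m,P_n}))$ is the standard comparison between an ideal and its initial ideal, so it suffices to show both sides equal $n+m-1$. Since $\ini_\tau(\calJ_{K_m,P_n}) = I(H)$ by \Cref{rmk:graph_H}, the lower bound amounts to computing $\depth(S/I(H))$. The vertices $x_{m,1}$ and $x_{1,n}$ are isolated in $H$, so by tensor-product additivity they contribute $+2$ to the depth, reducing everything to the depth of the main connected component of $H$. For $m=2$ this component decomposes further as a disjoint union of $n-1$ single edges, yielding $\depth(S/I(H)) = (n-1)+2 = n+1$ immediately. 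For $m \ge 3$ I would proceed by induction on $n$ via a deletion/contraction short exact sequence at a well-chosen vertex (for instance, $x_{m,n}$), exploiting the recursive structure $H = H_1 \cup \cdots \cup H_{n-1}$ described in \Cref{set:path}. For the matching upper bound on $\depth(S/\calJ_{K_m,P_n})$, I would exhibit an associated prime $\frakp \in \Ass(S/\calJ_{K_m,P_n})$ of dimension exactly $n+m-1$, and apply the standard inequality $\depth(S/I) \le \dim(S/\frakp)$.

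Part (b) is monotonicity for powers of the monomial ideal $\ini_\tau(\calJ_{K_m,P_n}) = I(H)$. By \Cref{lem:sagbi_path} we have $\ini_\tau(\calJ^t_{K_m,P_n}) = I(H)^t$, and since $H$ is bipartite (\Cref{rmk:graph_H}), a classical application of K\"onig's theorem shows that $I(H)$ is normally torsion-free, so $I(H)^t = I(H)^{(t)}$ for all $t \ge 1$. Combined with the Cohen--Macaulayness of $\gr_{I(H)}(S)$ from \Cref{cor:CM}\ref{cor:CM-d}, the standard monotonicity result for depths of powers of normally torsion-free squarefree monomial ideals with Cohen--Macaulay associated graded ring then gives the desired inequality.

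Part (c) is the limit computation. By Brodmann's theorem, $\depth(S/I^t)$ eventually stabilizes; under the hypothesis that $\gr_I(S)$ is Cohen--Macaulay, the stable value equals $\dim(S) - \ell(I)$ by a classical theorem of Eisenbud--Huneke. Both $\gr_{\calJ_{K_m,P_n}}(S)$ and $\gr_{\ini_\tau(\calJ_{K_m,P_n})}(S)$ are Cohen--Macaulay by \Cref{cor:CM}\ref{cor:CM-d}, and their analytic spreads coincide and equal $n-1$ when $m=2$ or $mn-3$ when $m \ge 3$ by \Cref{cor:CM}\ref{cor:CM_analytic_spread}. Substituting $\dim(S) = mn$ yields the two cases claimed. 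The main obstacle is part (a): determining $\depth(S/I(H))$ for $m \ge 3$ demands a careful inductive argument since $H$ has no obvious simplicial vertex, and pinning down an explicit minimal prime of $\calJ_{K_m,P_n}$ of dimension $n+m-1$ for the matching upper bound requires combinatorial care.
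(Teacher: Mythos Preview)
Your argument for part (c) is correct and essentially matches the paper's: both associated graded rings are Cohen--Macaulay by \Cref{cor:CM}\ref{cor:CM-d}, the analytic spreads agree by \Cref{cor:CM}\ref{cor:CM_analytic_spread}, and the Brodmann/Eisenbud--Huneke machinery finishes the job. For part (a), however, the paper does not attempt the direct computation you outline. It simply observes that $P_n$ is a generalized block graph and invokes \cite[Theorem 3.3]{MR4233116}, which computes both $\depth(S/\calJ_{K_m,G})$ and $\depth(S/\ini_\tau(\calJ_{K_m,G}))$ for any generalized block graph $G$ in one stroke. Your inductive plan for $\depth(S/I(H))$ and the associated-prime argument for the upper bound might go through, but they constitute real work that you have not carried out, whereas the paper's proof of (a) is a one-line citation.

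For part (b) there is a genuine gap. You invoke a ``standard monotonicity result for depths of powers of normally torsion-free squarefree monomial ideals with Cohen--Macaulay associated graded ring,'' but no such result exists in that generality. Cohen--Macaulayness of $\gr_I(S)$ gives (via Huneke) that $\min_t \depth(S/I^t)=\dim S-\ell(I)$; it does \emph{not} force $t\mapsto\depth(S/I^t)$ to be non-increasing, and depth functions of symbolic powers of squarefree monomial ideals can be genuinely non-monotone. The paper's argument is different: after reducing to $I(H)^{(t)}$ exactly as you do, it notes that $H$ has leaf vertices (for instance $x_{m-1,1}$ and $x_{2,n}$) and applies \cite[Theorem 5.2]{MR3798623}, which says that the projective-dimension function $t\mapsto\pd(S/I(G)^{(t)})$ is non-decreasing whenever the graph $G$ has a leaf. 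Auslander--Buchsbaum then converts this into the claimed depth inequality. You need either this leaf-based input or some other concrete mechanism to make your part (b) go through.
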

\begin{proof} 
    \begin{enumerate}[a]
        \item 
            Recall that a chordal graph is said to be a \emph{generalized block graph} if it satisfies: for any three maximal cliques $F_i$, $F_j$, and $F_k$, if $F_i\cap F_j \cap F_k\ne \emptyset$, then $F_i\cap F_j =F_i\cap F_k=F_j\cap F_k$. Since the path graph $P_n$ is clearly a generalized block graph whose clique number is $2$, this part follows from \cite[Theorem 3.3]{MR4233116}.

        \item  By \Cref{lem:sagbi_path}, we get that $\ini_\tau(\calJ_{K_m,P_n}^t)=(\ini_\tau(\calJ_{K_m,P_n}))^t=I(H)^t$
            for all $t\ge 1$, where $H$ is the bipartite graph constructed in \Cref{set:path}. Therefore, it suffices to prove that $\depth\left(S/I(H)^t\right)\ge \depth\left(S/I(H)^{t+1}\right)$ for all $t\ge 1$.  Since $H$ is a bipartite graph, we have $I(H)^t=I(H)^{(t)}$ for any $t\ge 1$ by \cite[Theorem 14.3.6 and Corollary 14.3.15]{MR3362802}, where $I(H)^{(t)}$ is the $t$-th symbolic power of $I(H)$. In addition, note that $x_{m-1,1}$ and $x_{2,n}$ are two leaves of $H$, we obtain that $\pd(S/I(H)^{(t+1)})\ge \pd(S/I(H)^{(t)})$ by \cite[Theorem 5.2]{MR3798623}. The desired statement then follows from the Auslander--Buchsbaum formula; cf, for example, \cite[Corollary A.4.3]{MR2724673}.

        \item For all $t\ge 1$, by \cite [Theorem 3.3.4]{MR2724673}, we have 
            \begin{equation*}
                \depth\left(\frac{S}{\calJ^t_{K_m,P_n}}\right)\ge  \depth\left(\frac{S}{\ini_\tau(\calJ^t_{K_m,P_n})}\right).
            \end{equation*}
            It follows from the previous part \ref{limitdepth_c}, \cite[Proposition 3.3]{MR696134}, \Cref{lem:sagbi_path}, and \Cref{cor:CM}\ref{cor:CM-d} that
            \begin{align}
                \lim\limits_{t\to\infty} \depth\left(\frac{S}{\calJ^t_{K_m,P_n}}\right)& \ge   \lim\limits_{t\to\infty} \depth\left(\frac{S}{\ini_\tau(\calJ^t_{K_m,P_n})}\right)=\inf_{t\ge 1} \depth\left(\frac{S}{\ini_\tau(\calJ^t_{K_m,P_n})}\right)
                \notag\\
                & =  
                \inf_{t\ge 1} \depth\left(\frac{S}{(\ini_\tau(\calJ_{K_m,P_n}))^t}\right) =
                \dim(S)-\ell(\ini_\tau(\calJ_{K_m,P_n}))   \label{eqn:limit_1}
            \end{align}
            where  $\ell(\ini_\tau(\calJ_{K_m,P_n}))$ is the analytic spread of $\ini_\tau(\calJ_{K_m,P_n})$. 

            At the same time, it follows from that \cite [Theorem 1.2]{MR2163482} that $\lim\limits_{t\to\infty} \depth\left(S/\calJ^t_{K_m,P_n}\right)$ exists and satisfies
            \begin{equation}
                \lim\limits_{t\to\infty} \depth\left(\frac{S}{\calJ^t_{K_m,P_n}}\right)\le \dim(S)-\ell(\calJ_{K_m,P_n}).
                \label{eqn:limit_2}
            \end{equation}
            If we combine \Cref{eqn:limit_1,eqn:limit_2} together, it remains to apply \Cref{cor:CM}\ref{cor:CM_analytic_spread}.        
            \qedhere
    \end{enumerate}
\end{proof} 

We end this section with the study of the symbolic powers of $\calJ_{K_m,P_n}$. 
Let $I$ be an ideal of $S$, and suppose that $\Ass(I)$ is the set of associated prime ideals of $I$. For any integer $t\ge 1$, the \emph{$t$-th symbolic power} of $I$ is defined by
\[
    I^{(t)} \coloneqq \bigcap\limits_{\frakp\in \Ass(I)}(I^t S_\frakp\cap S).
\]
In most cases, symbolic powers are not identical to the ordinary powers. However, Ene and Herzog proved in \cite{MR4143239} that if $G$ is a closed graph and $J_G$ is its binomial edge ideal, then $J_G^{(t)}=J_G^t$ for all $t\ge 1$. Recall that $G$ is said to be \emph{closed} if for all edges $\{i,j\}$ and $\{k,l\}$ of $G$ with $i<j$ and $k<l$, one has $\{j,l\}\in E(G)$ if $i=k$, and $\{i,k\} \in E(G)$ if $j=l$. Path graphs are the simplest closed graphs. In the remaining of this section, we show that the symbolic powers of the \emph{generalized} binomial edge ideal of a path graph coincide with the ordinary powers.

\begin{Theorem}
    \label{thm:symbolic_power} 
    Let $t$ be a positive integer and 
    $\calJ^{(t)}_{K_m,P_n}$ be the $t$-th symbolic power of $\calJ_{K_m,P_n}$. Then, we have  $\calJ^{(t)}_{K_m,P_n}=\calJ^{t}_{K_m,P_n}$.
\end{Theorem}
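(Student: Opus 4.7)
The containment $\calJ_{K_m,P_n}^t\subseteq\calJ_{K_m,P_n}^{(t)}$ holds by definition of symbolic powers, so the task is the reverse inclusion. My plan is to transfer the comparison to the initial ideal side via Hilbert functions. By Remark \ref{rmk:graph_H}, $H$ is bipartite, hence $I(H)$ is normally torsionfree, giving $I(H)^{(t)}=I(H)^t$ for every $t\ge 1$ by \cite[Theorem 14.3.6 and Corollary 14.3.15]{MR3362802}. Combined with Theorem \ref{lem:sagbi_path}, this yields
\[
\mathrm{HF}(S/\calJ_{K_m,P_n}^t)=\mathrm{HF}(S/\ini_\tau(\calJ_{K_m,P_n}^t))=\mathrm{HF}(S/I(H)^t)=\mathrm{HF}(S/I(H)^{(t)}).
\]

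The decisive step is to establish the inclusion $\ini_\tau(\calJ_{K_m,P_n}^{(t)})\subseteq I(H)^{(t)}$. Granting this,
\[
\mathrm{HF}(S/\calJ_{K_m,P_n}^{(t)})=\mathrm{HF}(S/\ini_\tau(\calJ_{K_m,P_n}^{(t)}))\ge \mathrm{HF}(S/I(H)^{(t)})=\mathrm{HF}(S/\calJ_{K_m,P_n}^t),
\]
while the reverse inequality follows from $\calJ_{K_m,P_n}^t\subseteq \calJ_{K_m,P_n}^{(t)}$. The equality of Hilbert functions, combined with the inclusion of homogeneous ideals, forces the two ideals to coincide.

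To prove the key inclusion, take $f\in\calJ_{K_m,P_n}^{(t)}$. By definition there exists $s\in S$, a non-zero-divisor modulo $\calJ_{K_m,P_n}$, with $sf\in\calJ_{K_m,P_n}^t$. Then
\[
\ini_\tau(s)\cdot\ini_\tau(f)=\ini_\tau(sf)\in \ini_\tau(\calJ_{K_m,P_n}^t)=I(H)^t=I(H)^{(t)}.
\]
If $\ini_\tau(s)$ is itself a non-zero-divisor modulo $I(H)$, then the symbolic power characterization delivers $\ini_\tau(f)\in I(H)^{(t)}$, which is precisely what is needed.

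The principal obstacle is this non-zero-divisor transfer: for each minimal prime $Q$ of $I(H)$, I would need that $\ini_\tau(s)\in Q$ forces $s$ to lie in some minimal prime of $\calJ_{K_m,P_n}$. To verify this I would combine Rauh's explicit primary decomposition \cite{MR3011436}, which describes the minimal primes of $\calJ_{K_m,P_n}$ as $P_T=(x_{i,j}:i\in[m],\,j\in T)+\sum_{V'}I_2(X_{V'})$ with $V'$ running over the connected components of $P_n\setminus T$, with the description of the minimal primes of the monomial ideal $I(H)$ as the monomial primes indexed by the minimal vertex covers of the bipartite graph $H$ — both families are explicit and combinatorial, so a direct matching seems feasible. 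Should this route prove stubborn, an alternative is to bootstrap normal torsion-freeness from $I(H)$ up to $\calJ_{K_m,P_n}$ by exploiting the flat Sagbi degeneration encoded in the normal Cohen--Macaulay Rees algebra from Corollary \ref{cor:CM}, together with the reducedness of $\gr_{I(H)}(S)$ that follows from $I(H)$ being normally torsionfree.
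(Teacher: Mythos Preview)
Your Hilbert-function strategy is sound in outline, but the ``non-zero-divisor transfer'' you isolate as the principal obstacle is not merely hard---it is false as stated. Take the smallest nontrivial case $m=2$, $n=3$. The minimal primes of $\calJ_{K_2,P_3}$ are $P_\emptyset=I_2(\bdX)$ and $P_{\{2\}}=(x_{1,2},x_{2,2})$, and the variable $s=x_{1,1}$ lies in neither, so it is a non-zero-divisor modulo $\calJ_{K_2,P_3}$. Yet $I(H)=(x_{1,1}x_{2,2},\,x_{1,2}x_{2,3})$ has $(x_{1,1},x_{1,2})$ among its minimal primes, and $\ini_\tau(s)=x_{1,1}$ lands squarely in it. So the implication ``$s$ regular mod $\calJ$ $\Rightarrow$ $\ini_\tau(s)$ regular mod $I(H)$'' fails already here, and the argument as written does not go through. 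One could try to salvage it by choosing $s$ more carefully from $(\calJ^t:f)$, but making that choice uniformly requires control over how the minimal primes of $I(H)$ relate to those of $\calJ_{K_m,P_n}$---which is essentially the content you are trying to avoid.

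The paper's route is different: rather than pushing a single non-zero-divisor through the initial-ideal operation, it works prime by prime. One verifies that $\ini_\tau$ commutes with the primary decomposition in the sense that $\ini_\tau(\calJ_{K_m,P_n})=\bigcap_{\frakp}\ini_\tau(\frakp)$, and that $\ini_\tau(\frakp^{(t)})=(\ini_\tau(\frakp))^{(t)}$ for each minimal prime $\frakp$; the latter reduces (via the explicit form of $P_A(K_m,P_n)$) to the known commutation for the maximal-minor ideals $I_2(\bdX)$. These two compatibilities, together with $(\ini_\tau\calJ)^{(t)}=(\ini_\tau\calJ)^t$ from bipartiteness, assemble into the chain $\ini_\tau(\calJ^t)\supseteq\cdots\supseteq\ini_\tau(\calJ^{(t)})\supseteq\ini_\tau(\calJ^t)$, forcing equality. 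Your alternative via ``bootstrapping normal torsion-freeness through the Sagbi degeneration'' is too vague to evaluate; reducedness of $\gr_{I(H)}(S)$ does not by itself propagate to $\gr_{\calJ}(S)$ under a flat deformation without further argument.
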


The proof of this result is involved. We need some preparations.

\begin{Lemma}
    \label{lem:symbolic_power_conj_2}
    Let $I=I_2(\bdX)$ be the ideal in $S=\KK[\bdX]$, which is generated by all $2$-minors of the $m\times n$ matrix $\bdX$. Then, one has $\ini_\tau(I^{(t)})=(\ini_\tau(I))^{(t)}$ for every integer $t\ge 1$.
\end{Lemma}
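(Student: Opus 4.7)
My plan is to use the De Concini--Eisenbud--Procesi (DEP) description of the symbolic powers of $I = I_2(\bdX)$, together with a Knuth--Robinson--Schensted (KRS) bijective comparison, as in Bruns--Conca (\emph{KRS and powers of determinantal ideals}, Compositio Math.\ 125 (2001)). By DEP (see also Bruns--Vetter, \emph{Determinantal Rings}), the polynomial ring $S = \KK[\bdX]$ has a $\KK$-basis $\calB$ of standard bitableaux $T = \delta_1\delta_2\cdots\delta_k$ of shape $\lambda(T) = (|\delta_1|,\ldots,|\delta_k|)$ (each $\delta_j$ a minor of $\bdX$ arranged with sizes in non-increasing order), and moreover the symbolic power $I^{(t)}$ has $\KK$-basis $\calB^{(t)} \coloneqq \{T \in \calB : |\lambda(T)| - \ell(\lambda(T)) \ge t\}$. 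Under $\tau$, by Sturmfels' Sagbi-basis property for the algebra of minors, the initial of each minor is its main-diagonal monomial (a strict chain in the poset $[m]\times[n]$, i.e.\ one in which both coordinates strictly increase), and $\ini_\tau(T) = \prod_j \ini_\tau(\delta_j)$.

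For the inclusion $\ini_\tau(I^{(t)})\subseteq(\ini_\tau(I))^{(t)}$: the primary decomposition yields $(\ini_\tau(I))^{(t)} = \bigcap_A \frakp_A^{\,t}$, where $A$ runs over the maximal independent sets (``weak antichains'') of the graph on $[m]\times[n]$ whose edges are the strictly comparable pairs, and $\frakp_A = (x_{i,j}:(i,j)\notin A)$. Hence a monomial $m$ with exponent vector $(a_{i,j})$ belongs to $(\ini_\tau(I))^{(t)}$ iff $\max_A\sum_{(i,j)\in A}a_{i,j} \le \deg(m) - t$. For any $T\in\calB^{(t)}$ with $m = \ini_\tau(T)$,
\[
\sum_{(i,j)\in A}a_{i,j} \;=\; \sum_{j=1}^{k}\bigl|A\cap\ini_\tau(\delta_j)\bigr| \;\le\; k \;=\; \ell(\lambda(T)) \;\le\; |\lambda(T)| - t \;=\; \deg(m) - t,
\]
using that a weak antichain meets any strict chain in at most one point, together with the defining condition of $\calB^{(t)}$. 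An induction on the term order --- whenever several $T_i\in\calB^{(t)}$ share a common initial, their differences still lie in $I^{(t)}$ and re-expand over $\calB^{(t)}$, so every leading term arising from cancellations is again some $\ini_\tau(T_k)$ with $T_k\in\calB^{(t)}$ --- gives $\ini_\tau(I^{(t)})\subseteq(\ini_\tau(I))^{(t)}$.

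Equality then reduces to a Hilbert function comparison: by Gröbner degeneration, $\dim_\KK(S/\ini_\tau(I^{(t)}))_d = \dim_\KK(S/I^{(t)})_d$, so it suffices to show $\dim_\KK(S/I^{(t)})_d = \dim_\KK(S/(\ini_\tau(I))^{(t)})_d$ for every $d$. This is the substantive KRS input: one constructs a degree-preserving bijection between $\calB$ and the set of monomials of $S$ under which the shape statistic $|\lambda(T)| - \ell(\lambda(T))$ matches the ``excess'' statistic $\deg(m) - \max_A\sum_{(i,j)\in A}a_{i,j}$. This is a form of Greene's theorem for the poset $[m]\times[n]$ with multiplicities, and is precisely the content established by Bruns--Conca. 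The main technical obstacle in the proof is this enumerative/bijective matching; once it is in hand, the inclusion from the previous paragraph forces the equality of the two ideals.
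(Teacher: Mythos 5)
Your proposal is correct in substance, but it is organized quite differently from the paper's proof, which is essentially a two-line matching of generating sets: the paper observes that $\widetilde G$ (the graph with $\ini_\tau(I)=I(\widetilde G)$) is a comparability graph, hence perfect, so that by Villarreal's description the generators of $(\ini_\tau(I))^{(t)}$ are exactly the products of monomials supported on strict chains with total excess $\sum_k(r_k-1)=t$; it then quotes the theorem of Bruns--Conca--Raicu--Varbaro that $\ini_\tau(I^{(t)})$ is generated by the diagonal monomials of products of minors $\Delta$ with $\gamma_2(\Delta)=t$, and notes the two lists coincide. You instead describe $(\ini_\tau(I))^{(t)}$ through its primary decomposition over maximal weak antichains (which needs no perfectness, only that the ideal is squarefree), prove one inclusion by the chain-meets-antichain argument, and close the gap by a Hilbert function comparison resting on KRS and Greene's theorem. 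In effect you are re-deriving the content of the black box the paper cites; your route is more self-contained but also longer, and the statistic identity $\deg(m)-\max_A\sum_{(i,j)\in A}a_{i,j}=|\lambda|-\ell(\lambda)$ that you isolate as the crux is indeed exactly the Greene-theorem input inside Bruns--Conca.

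One step of your write-up is not sound as literally stated: the ``induction on the term order'' used to pass from $\ini_\tau(T)\in(\ini_\tau(I))^{(t)}$ for $T\in\calB^{(t)}$ to $\ini_\tau(I^{(t)})\subseteq(\ini_\tau(I))^{(t)}$. Distinct standard bitableaux can share the same product of main diagonals (e.g.\ $[1,2\,|\,1,2]\,[3,4\,|\,3,4]$ and $[1,2,3,4\,|\,1,2,3,4]$ both lead with $x_{1,1}x_{2,2}x_{3,3}x_{4,4}$), so a combination $f=\sum c_iT_i$ can have $\ini_\tau(f)$ equal to a non-leading monomial of some $T_i$, and non-leading monomials of products of minors need not satisfy the antichain bound (the trailing term $x_{1,2}x_{2,1}$ of a single $2$-minor is supported on an antichain and does not even lie in $\ini_\tau(I)$). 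Re-expanding differences over $\calB^{(t)}$ returns the same expression and does not resolve this. The correct repair is the KRS-invariance theorem you already cite: $\ini_\tau(I^{(t)})$ is spanned by $\{\operatorname{KRS}(T):T\in\calB^{(t)}\}$, and Greene's theorem (not the elementary chain/antichain count) shows each $\operatorname{KRS}(T)$ satisfies the antichain bound. With that substitution your argument closes, so the gap is one of attribution rather than of strategy.
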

\begin{proof}
    Consider the graph $\widetilde{G}$ with the edge set 
    \[
        E(\widetilde{G})=\Set{\{x_{i,j},x_{i',j'}\}:1\le i<i'\le m, 1\le j<j'\le n}.
    \]
    It is clear that $\ini_\tau(I)$ is the edge ideal of $\widetilde{G}$ in $S$. To confirm the expected equality, we first show that $\widetilde{G}$ is \emph{perfect} in the sense that the \emph{chromatic number} $\chi(G_{V'})$ equals the \emph{clique number} $\omega(G_{V'})$ for every subset $V'$ of $V(\widetilde{G})$. Recall that a graph is called a \emph{comparability graph} if there exists a partial ordering of its vertex set such that two vertices are adjacent if and only if they are comparable. It is well-known that every comparability graph is perfect, see \cite[Corollary 14.5.6]{MR3362802}. At the same time, the graph $\widetilde{G}$ here is a comparability graph, where the poset structure can be taken with respect to the subscripts of the vertices of $\widetilde{G}$: $x_{i,j}\prec x_{i',j'}$ if and only if both $i<i'$ and $j<j'$. Thus, $\widetilde{G}$ is a perfect graph.

    By looking at the graded components, it follows from \cite[Corollary 13.7.2]{MR3362802} that $(\ini_\tau(I))^{(t)}$ is generated by monomials of the form $u_1u_2\dots u_s$ such that
    \[
        u_k=x_{i_{k,1},j_{k,1}}x_{i_{k,2},j_{k,2}}\cdots x_{i_{k,r_k},j_{k,r_k}}
    \]
    with $1\le i_{k,1}<i_{k,2}<\cdots<i_{k,r_k}\le m$, $1\le j_{k,1}<j_{k,2}<\cdots<j_{k,r_k}\le n$, and $\sum_{k=1}^s (r_k-1) = t$. 

    At the same time, it follows from \cite[Theorem 4.3.6]{zbMATH07539232} that $\ini_\tau(I^{(t)})$
    is generated by the monomials $\ini_\tau(\Delta)$, where $\Delta$ is a product of minors with $\gamma_2(\Delta) = t$ and no factor of size $<2$. These monomials are precisely those described in the previous paragraph. Thus, the proof is completed.
\end{proof}

Let $G$ be a simple graph and let $c(G)$ denote the number of connected components of $G$. A vertex $v$ is called a \emph{cut vertex} of $G$ if $c(G)< c(G\setminus v)$. Let $A$ be a subset of $V(G)$. By abuse of notation, we also let $c(A)$ denote the number of connected components of $G\setminus A$. If $v$ is a cut vertex of the induced subgraph $G\setminus (A\setminus \{v\})$ for any $v\in A$, then we say that $A$ has the \emph{cut point property}. Set $ \calC(G) \coloneqq \{\emptyset\}\cup\Set{A: A \text{\ has the cut point property}}$.

Now, let $G$ be a simple graph on the vertex set $[n]$. For each subset $A$ of $[n]$, we introduce the ideal
\[
    P_A(K_m, G)\coloneqq (x_{ij}: (i,j)\in [m]\times
    A)+\calJ_{K_m,\widetilde{G_1}}+\cdots+\calJ_{K_m,\widetilde{G_{c(A)}}}
\]
in $S$, where $G_1,\ldots, G_{c(A)}$ are the connected components of $G\setminus A$. 
It is well-known that 
\[
    \calJ_{K_m,G}=\bigcap_{A\in \calC(G)}P_A(K_m, G)
\]
is the minimal primary decomposition of the radical ideal $\calJ_{K_m,G}$; see \cite[Theorem 7]{MR3011436}.

\begin{Lemma}
    \label{lem:symbolic_power_conj_1}
    Let $A\subset [n]$. Then $\ini_\tau(P_A(K_m,G)^{(t)})=(\ini_\tau(P_A(K_m,G)))^{(t)}$ for any integer $t\ge 1$.
\end{Lemma}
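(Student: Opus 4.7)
The plan is to exploit the disjoint-variables structure of $P_A(K_m,G)=Q+\tilde J_1+\cdots+\tilde J_r$, where $Q=(x_{ij}:(i,j)\in [m]\times A)$ is generated by variables and each $\tilde J_i=I_2(\bdX^{(i)})$ is the ideal of $2$-minors of the submatrix of $\bdX$ on the columns $V(G_i)$. These summands live in pairwise disjoint variable blocks $X_A,X_{V(G_1)},\ldots,X_{V(G_r)}$, giving the tensor product $S=\KK[X_A]\otimes_{\KK}\KK[X_{V(G_1)}]\otimes_{\KK}\cdots\otimes_{\KK}\KK[X_{V(G_r)}]$. Three clean facts about ideals in pairwise disjoint variable blocks drive the argument: (i) $\ini_\tau$ of a sum equals the sum of the $\ini_\tau$'s (Buchberger's coprime-leading-term criterion), so in particular $\ini_\tau(P_A)=Q+\ini_\tau(\tilde J_1)+\cdots+\ini_\tau(\tilde J_r)$; (ii) $\ini_\tau$ of a product equals the product of the $\ini_\tau$'s (Hilbert function comparison using $\dim(IJ)_d=\sum_{d_1+d_2=d}\dim I_{d_1}\cdot\dim J_{d_2}$); and (iii) the H\`a--Nguyen--Trung--Trung formula $(I+J)^{(t)}=\sum_{a+b=t}I^{(a)}J^{(b)}$ for symbolic powers of sums in disjoint variables.

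Iterating (iii), and using $Q^{(a_0)}=Q^{a_0}$ (since $Q$ is generated by a regular sequence of variables), I would obtain
\[
P_A^{(t)}=\sum_{a_0+a_1+\cdots+a_r=t}Q^{a_0}\tilde J_1^{(a_1)}\cdots\tilde J_r^{(a_r)}\quad\text{and}\quad(\ini_\tau P_A)^{(t)}=\sum Q^{a_0}(\ini_\tau\tilde J_1)^{(a_1)}\cdots(\ini_\tau\tilde J_r)^{(a_r)}.
\]
For each summand, combining (ii) with \Cref{lem:symbolic_power_conj_2} applied block by block yields $\ini_\tau(Q^{a_0}\tilde J_1^{(a_1)}\cdots\tilde J_r^{(a_r)})=Q^{a_0}(\ini_\tau\tilde J_1)^{(a_1)}\cdots(\ini_\tau\tilde J_r)^{(a_r)}$, and summing gives the inclusion $(\ini_\tau P_A)^{(t)}\subseteq\ini_\tau(P_A^{(t)})$.

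For the reverse inclusion I would compare multigraded Hilbert functions. Using the tensor product decomposition of $S$ together with the elementary filtration identity $\dim\sum_a U_a\otimes W_a=\sum_a\dim(U_a/U_{a+1})\cdot\dim W_a$ (for a decreasing chain $\{U_a\}$ and an increasing chain $\{W_a\}$), the multigraded Hilbert function of $S/P_A^{(t)}$ can be expressed, by induction on the number of blocks, purely in terms of the Hilbert functions of the $\tilde J_i^{(a_i)}$. Since \Cref{lem:symbolic_power_conj_2} gives $\HF(\tilde J_i^{(a_i)})=\HF((\ini_\tau\tilde J_i)^{(a_i)})$ block by block, the same formula applied to $(\ini_\tau P_A)^{(t)}$ produces an identical Hilbert function. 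Combined with the standard identity $\HF(P_A^{(t)})=\HF(\ini_\tau(P_A^{(t)}))$ and the inclusion above, this forces $\ini_\tau(P_A^{(t)})=(\ini_\tau P_A)^{(t)}$. The main technical obstacle will be carefully unwinding the iterated multigraded filtration computation so that the contributions from the $r+1$ variable blocks are tracked without cross-interference, and adapting the H\`a--Nguyen--Trung--Trung symbolic power decomposition in the iterated form needed here.
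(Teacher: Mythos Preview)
Your proposal is correct and follows essentially the same strategy as the paper: exploit the disjoint-variable decomposition, invoke the H\`a--Nguyen--Trung--Trung result on symbolic powers of sums, feed in \Cref{lem:symbolic_power_conj_2} for each determinantal block, and finish with a Hilbert function comparison. The paper packages the argument via the symbolic Rees algebra $\calR_s(I)=\bigoplus_{k\ge 0}I^{(k)}T^k$, so that the H\`a--Nguyen--Trung--Trung input becomes a single tensor-product identity $\calR_s(P)\cong\calR_s(Q)\otimes_{\KK}\bigl(\bigotimes_k\calR_s(J_k)\bigr)$ and the Hilbert function match is immediate from the factorisation; your component-by-component filtration identity achieves the same end with a bit more bookkeeping.
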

\begin{proof}
    We follow the strategy of \cite[Lemma 3.2]{MR4143239}. The only essential change is that we consider instead the symbolic Rees algebra $\calR_s(I)\coloneqq \oplus_{k\ge 0}I^{(k)}T^k$ of an ideal $I$ in $S$, which is a graded subalgebra of $S[T]$.

    For simplicity, we write $P$ instead of $P_A(K_m,G)$, $c$ instead of $c(A)$, and $J_k$ instead of $\calJ_{K_m,\widetilde{G_{k}}}$ for $1\le k\le c$. Since the sets of variables $\{x_{i,j}:i\in [m],j\in V(\widetilde{G_k})\}$ as well as the set $\Set{x_{i,j}:i\in [m],j\in A}$ are pairwise disjoint, we have
    \begin{equation}
        \ini_\tau(P)=(x_{i,j}:i\in [m],j\in A)+\ini_\tau(J_1)+\cdots+\ini_\tau(J_c).
        \label{eqn:initial_prime}
    \end{equation}
    It follows from the same pairwise disjointness and \cite[Theorem 3.4]{MR4074049} that
    \begin{align}
        \calR_s(P)&=\calR_s((x_{i,j}:i\in [m],j\in A))\otimes_{\KK}(\otimes_{k=1}^c \calR_s(J_k)),
        \label{eqn:symbolic_Rees} \\
        \intertext{and}
        \calR_s(\ini_\tau(P)) & = \calR_s((x_{i,j}:i\in [m],j\in A))\otimes_{\KK}(\otimes_{k=1}^c \calR_s(\ini_\tau(J_k))) \notag\\
        & = \calR_s((x_{i,j}:i\in [m],j\in A))\otimes_{\KK}(\otimes_{k=1}^c \ini_{\tau'}(\calR_s(J_k))),
        \label{eqn:symbolic_Rees_init}
    \end{align}
    where the last equality is essentially due to \Cref{lem:symbolic_power_conj_2}. 
    Since it is clear that 
    $\text{RHS of \eqref{eqn:symbolic_Rees_init}} \subseteq  \ini_{\tau'} (\text{RHS of \eqref{eqn:symbolic_Rees}})$,
    we have 
    \begin{equation}
        \calR_s(\ini_\tau(P))\subseteq \ini_{\tau'}(\calR_s(P)).
        \label{eqn:containment_of_symbolic_powers}
    \end{equation}
    On the other hand, it is well-known that $\calR_s(P)$ and $\ini_{\tau'}(\calR_s(P))$ have the same Hilbert functions. Since $\calR_s(J_k)$ and $\ini_{\tau'}(\calR_s(J_k))$ have the same Hilbert functions for every $k$, it follows from \cref{eqn:symbolic_Rees,eqn:symbolic_Rees_init} that $\calR_s(\ini_\tau(P))$ and $\ini_{\tau'}(\calR_s(P))$ have the same Hilbert functions. Therefore, we have $\calR_s(\ini_\tau(P))=\ini_{\tau'}(\calR_s(P))$ from \cref{eqn:containment_of_symbolic_powers}. At the level of graded components, we obtain that $\ini_\tau(P^{(t)})=(\ini_\tau(P))^{(t)}$ for every $t$.
\end{proof}

\begin{Lemma}
    \label{lem:prime_decomp_initial_gen_block}
    Let $G$ be a generalized block graph. Then, 
    \begin{equation*}
        \ini_\tau(\calJ_{K_m,G})=\bigcap_{\frakp\in\Ass(\calJ_{K_m,G})} \ini_\tau(\frakp).
    \end{equation*}
\end{Lemma}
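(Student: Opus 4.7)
The plan is to prove the two containments separately. The easy direction
\[
\ini_\tau(\calJ_{K_m,G})\subseteq \bigcap_{\frakp\in\Ass(\calJ_{K_m,G})} \ini_\tau(\frakp)
\]
follows by combining the formal inclusion $\ini_\tau(I\cap J)\subseteq \ini_\tau(I)\cap\ini_\tau(J)$ with the minimal primary decomposition $\calJ_{K_m,G}=\bigcap_{A\in\calC(G)}P_A(K_m,G)$, which is available because $\calJ_{K_m,G}$ is radical (one of the two graphs, namely $K_m$, is complete, by the Ene--Herzog--Hibi--Rauh theorem recalled in the introduction).

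For the reverse containment, my plan is a Hilbert-function argument. Since $S/\calJ_{K_m,G}$ and $S/\ini_\tau(\calJ_{K_m,G})$ share the same Hilbert function, it suffices to show that $\bigcap_\frakp \ini_\tau(\frakp)$ has this same Hilbert function; combined with the containment above, equality then follows. The first step is to make each $\ini_\tau(P_A(K_m,G))$ explicit. Since $G$ is a generalized block graph, every connected component of $G\setminus A$ is a complete graph, so one has
\[
\ini_\tau(P_A(K_m,G))=(x_{i,j}:i\in[m],\,j\in A)+\sum_{k=1}^{c(A)}\ini_\tau(\calJ_{K_m,\widetilde{G_k}}),
\]
where the summands live on pairwise disjoint sets of variables and each $\calJ_{K_m,\widetilde{G_k}}$ is the ideal of $2\times 2$ minors of an $m\times|V(G_k)|$ submatrix, whose initial ideal is classically known.

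The intended execution is by induction on $|V(G)|$. Because generalized block graphs are chordal, one can peel off a simplicial vertex $v$, splitting $\calC(G)$ into those $A$ containing $v$ and those not, and then glue the two families along the column of variables indexed by $v$, using \Cref{lem:symbolic_power_conj_2} to absorb the complete-graph components. The main obstacle I anticipate is controlling the global intersection $\bigcap_A \ini_\tau(P_A)$: even when each individual $\ini_\tau(P_A)$ is understood, one must rule out spurious monomials arising across different cut sets $A$. I expect this to be resolved either by a direct Hilbert-series identity exploiting the combinatorics of $\calC(G)$ for a generalized block graph, or, as a cross-check, by matching every minimal generator of $\bigcap_A\ini_\tau(P_A)$ with the initial term of an explicit element of Rauh's Gröbner basis of $\calJ_{K_m,G}$ from \cite{MR3011436}.
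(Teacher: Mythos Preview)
Your easy containment is fine, and the formula
\[
\ini_\tau(P_A(K_m,G))=(x_{i,j}:i\in[m],\,j\in A)+\sum_{k=1}^{c(A)}\ini_\tau(\calJ_{K_m,\widetilde{G_k}})
\]
is correct as well, but the reason you give is wrong: it is \emph{not} true that the connected components of $G\setminus A$ are complete for a generalized block graph (take $A=\emptyset$ and $G=P_4$). The formula holds simply because, by definition of $P_A(K_m,G)$, the summands are $\calJ_{K_m,\widetilde{G_k}}$ with $\widetilde{G_k}$ the \emph{complete} graph on $V(G_k)$, and these live on disjoint sets of variables. So your displayed identity does not need the generalized block hypothesis at all; that hypothesis must enter elsewhere.

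The real gap is in your inductive mechanism. A simplicial vertex $v$ stays simplicial in every induced subgraph containing it, hence is never a cut vertex of $G\setminus(A\setminus\{v\})$; consequently $v\notin A$ for every $A\in\calC(G)$. Your proposed split of $\calC(G)$ into ``$A\ni v$'' and ``$A\not\ni v$'' is therefore vacuous, and peeling a simplicial vertex does not by itself organize the intersection $\bigcap_A\ini_\tau(P_A)$ into two inductively tractable pieces. You also have not supplied the step that would make a Hilbert-function argument go through; you only expect such an identity.

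The paper's proof runs a different induction, on the number of maximal cliques, using the clique-complex leaf order for generalized block graphs. Picking a leaf clique $F_r$ with branches $F_{t_1},\dots,F_{t_q}$ and common intersection $A$, one splits $\calC(G)$ according to whether $B\cap A=\emptyset$ or $A\subseteq B$, obtaining $\calJ_{K_m,G}=J_1\cap J_2$ with $J_1=\calJ_{K_m,G'}$ and $J_2=(x_{i,j}:(i,j)\in[m]\times A)+\calJ_{K_m,G''}$ for generalized block graphs $G',G''$ with fewer maximal cliques. The decisive input, which replaces your unproven Hilbert identity, is the equality $\ini_\tau(\calJ_{K_m,G})=\ini_\tau(J_1)\cap\ini_\tau(J_2)$, extracted from the proof of \cite[Theorem 3.3(c)]{MR4233116}; with this in hand the lemma follows immediately by induction. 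If you want to salvage your approach, you would need an analogous ``initial commutes with this particular two-term intersection'' statement at each step, and the natural decomposition that makes such a statement available is exactly the clique-leaf one, not simplicial-vertex removal.
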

\begin{proof}
    Without loss of generality, we assume that $G$ is connected. Let $r$ be the number of maximal cliques of $G$. When $r=1$, $G$ is a complete graph. Since $\calJ_{K_m,G}$ is a prime ideal in this case, the expected result is trivial. When $r>1$, we apply the following observation from the proof of \cite[Theorem 3.3]{MR4233116}: There exists a leaf order, say, $F_1,\dots,F_r$, on the clique complex $\Delta(G)$ of $G$.
    Let $F_{t_1},\dots,F_{t_q}$ be the branches of $F_r$. Then, the intersection of any pair of facets from $F_{t_1},\dots,F_{t_q}, F_r$ is the same set of vertices, say, $A$. Now, $\calJ_{K_m,G}=J_1\cap J_2$, where $J_1\coloneqq \bigcap_{B\in \calC(G), A\cap B=\emptyset}P_B(G)$ and $J_2\coloneqq \bigcap_{B\in \calC(G),A\subseteq B}P_B(G)$. Note that $J_1=\calJ_{K_m,G'}$ where $G'$ is obtained from $G$ by replacing the cliques $F_{t_1},\dots,F_{t_q},F_r$ by the clique on the vertex set $F_r\cup (\cup_{k=1}^q F_{t_k})$. At the same time, $J_2=(x_{i,j}:(i,j)\in [m]\times A)+\calJ_{K_m,G''}$, where $G''$ is the restriction of $G$ to the vertex set $V(G)\setminus A$. Obviously, $G'$ and $G''$ are generalized block graphs with fewer maximal cliques. Since we have $\ini_\tau(\calJ_{K_m,G})=\ini_\tau(J_1)\cap \ini_\tau(J_2)$ from the proof of \cite[Theorem 3.3(c)]{MR4233116} in the $r>1$ case, we are done by induction on $r$.
\end{proof}

\begin{Lemma}
    \label{lem:MR4143239_3_1}
    Let  $t$ be a positive integer. Suppose that $I$ is an ideal in $S$ and $\sigma$ is a term order such that $\ini_{\sigma}(I)$ is a squarefree monomial ideal. It follows that $I$ is a radical ideal and $I=\bigcap_{\frakp\in \Min(I)} \frakp$.
    Suppose that the following conditions are satisfied:
    \begin{enumerate}[a]
        \item $\ini_{\sigma}(\frakp)$ is a squarefree monomial ideal for each $\frakp\in \Min(I)$,
        \item $(\ini_{\sigma}(I))^{(t)}=(\ini_{\sigma}(I))^{t}$,
        \item $\ini_{\sigma}(\frakp^{(t)})=(\ini_{\sigma}(\frakp))^{(t)}$ for each $\frakp\in \Min(I)$,
        \item $\ini_{\sigma}(I)=\bigcap_{\frakp\in \Min(I)}\ini_{\sigma}(\frakp)$.
    \end{enumerate}
    Then  $I^{(t)}=I^t$.
\end{Lemma}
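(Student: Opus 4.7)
The plan is to reduce the equality $I^{(t)}=I^{t}$ to an equality of initial ideals, exploiting the standard fact that $S/J$ and $S/\ini_\sigma(J)$ share the same Hilbert function for any ideal $J$. Since the containment $I^{t}\subseteq I^{(t)}$ is automatic, once I establish $\ini_\sigma(I^{t})=\ini_\sigma(I^{(t)})$ the two quotients will have equal Hilbert functions, and the containment forces $I^{t}=I^{(t)}$. Squarefreeness of $\ini_\sigma(I)$ already implies that $I$ is radical and $I=\bigcap_{\frakp\in\Min(I)}\frakp$, so this setup is available.

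First I would write down the easy chain
\[
\ini_\sigma(I)^{t}\subseteq \ini_\sigma(I^{t})\subseteq \ini_\sigma(I^{(t)}).
\]
Next, using the primary decomposition $I^{(t)}=\bigcap_{\frakp\in\Min(I)}\frakp^{(t)}$ and the general fact that the initial ideal of an intersection is contained in the intersection of the initial ideals, I obtain
\[
\ini_\sigma(I^{(t)})\subseteq \bigcap_{\frakp\in\Min(I)}\ini_\sigma(\frakp^{(t)})=\bigcap_{\frakp\in\Min(I)}\ini_\sigma(\frakp)^{(t)},
\]
where the last equality is condition (c). To close the loop, I would identify this last intersection with $\ini_\sigma(I)^{(t)}$; combined with condition (b) this gives $\ini_\sigma(I)^{(t)}=\ini_\sigma(I)^{t}$, which pins all three terms in the chain above equal to $\ini_\sigma(I)^{t}$, as desired.

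The step I expect to require the most care is the identification
\[
\bigcap_{\frakp\in\Min(I)}\ini_\sigma(\frakp)^{(t)}=\ini_\sigma(I)^{(t)}.
\]
By condition (a) each $\ini_\sigma(\frakp)$ is a squarefree monomial ideal, so it decomposes as $\ini_\sigma(\frakp)=\bigcap_{\frakq\in\Min(\ini_\sigma(\frakp))}\frakq$ with each $\frakq$ generated by variables; for such monomial primes $\frakq^{(t)}=\frakq^{t}$, so the symbolic power of any squarefree monomial ideal is the intersection of ordinary $t$-th powers of its minimal primes. Applying this decomposition and condition (d) shows that $\Min(\ini_\sigma(I))\subseteq \bigcup_{\frakp}\Min(\ini_\sigma(\frakp))$, while the redundant monomial primes $\frakq$ in the larger union contain some $\frakq'\in\Min(\ini_\sigma(I))$, hence satisfy $\frakq^{t}\supseteq \frakq'^{t}$ and drop out of the intersection. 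Thus the intersection over $\bigcup_{\frakp}\Min(\ini_\sigma(\frakp))$ collapses to the intersection over $\Min(\ini_\sigma(I))$, which is $\ini_\sigma(I)^{(t)}$. The main obstacle is precisely this bookkeeping between the non-prime decomposition provided by (d) and the prime decomposition needed to describe symbolic powers; once that is done, conditions (b) and (c) chain everything together mechanically.
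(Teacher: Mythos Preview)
Your proof is correct and follows essentially the same approach as the paper: both arguments chain the containments $\ini_\sigma(I)^t\subseteq\ini_\sigma(I^t)\subseteq\ini_\sigma(I^{(t)})\subseteq\bigcap_{\frakp}\ini_\sigma(\frakp^{(t)})$, use conditions (a), (c), (d) to identify the last term with $(\ini_\sigma(I))^{(t)}$, and close the loop via (b). Your write-up actually makes the ``bookkeeping'' step---why $\bigcap_{\frakp}(\ini_\sigma(\frakp))^{(t)}=(\ini_\sigma(I))^{(t)}$---more explicit than the paper's, but the logic is the same.
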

\begin{proof}
    Since $\ini_{\sigma}(I)$ is a squarefree monomial ideal, we have
    \[
        (\ini_{\sigma}(I))^{(t)}=\bigcap_{\frakP\in \Min(\ini_{\sigma}(I))} \frakP^t.
    \]
    Likewise, we have 
    \[
        (\ini_{\sigma}(\frakp))^{(t)}=\bigcap_{\frakP\in \Min(\ini_{\sigma}(\frakp))} \frakP^t
    \]
    for each $\frakp\in \Min(I)$.
    Therefore, we have
    $\left(\bigcap_{\frakp\in \Min(I)} \ini_{\sigma}(\frakp)\right)^{(t)} {=} \bigcap_{\frakp\in \Min(I)} (\ini_{\sigma}(\frakp))^{(t)}$.
    It follows from the remaining assumptions that
    \begin{align*}
        \ini_{\sigma}(I^t) &\supseteq (\ini_{\sigma}(I))^t = (\ini_{\sigma}(I))^{(t)}
        = \left(\bigcap_{\frakp\in \Min(I)} \ini_{\sigma}(\frakp)\right)^{(t)} {=} \bigcap_{\frakp\in \Min(I)} (\ini_{\sigma}(\frakp))^{(t)} \\
        &= \bigcap_{\frakp\in \Min(I)} \ini_{\sigma}(\frakp^{(t)}) \supseteq \ini_{\sigma} \left(\bigcap_{\frakp\in \Min(I)} \frakp^{(t)}\right) = \ini_{\sigma}(I^{(t)}) \supseteq \ini_{\sigma}(I^{t}).
    \end{align*}
    Therefore, we obtain that $\ini_{\sigma}(I^{(t)})=\ini_{\sigma}(I^t)$. Since $I^t\subseteq I^{(t)}$, we get $I^t=I^{(t)}$.
\end{proof}

\begin{proof}
    [Proof of \Cref{thm:symbolic_power}.]
    For each $\frakp\in \Min(\calJ_{K_m,P_n})$, we know $\ini_\tau(\frakp)$ is squarefree from \cref{eqn:initial_prime}. Furthermore, notice that $\ini_\tau(\calJ_{K_m,P_n})=I(H)$ where $H$ is a bipartite graph. It follows from \cite[Corollary 13.3.6]{MR3362802} that $(\ini_\tau(\calJ_{K_m,P_n}))^t=(\ini_\tau(\calJ_{K_m,P_n}))^{(t)}$. It remains to apply \Cref{lem:prime_decomp_initial_gen_block,lem:symbolic_power_conj_1,lem:MR4143239_3_1}.
\end{proof}

\begin{Remark}
    \Cref{lem:MR4143239_3_1} is a modification of \cite[Lemma 3.1]{MR4143239}. Note that we cannot use \cite[Lemma 3.1]{MR4143239} directly to prove \Cref{thm:symbolic_power}, since the condition (ii)(a) of \cite[Lemma 3.1]{MR4143239} is not satisfied by $\calJ_{K_m,P_n}$ when $m=n=3$. In fact, in this case, there is a prime ideal $\frakp$ associated with $\calJ_{K_3,P_3}$ such that $\frakp^2\ne \frakp^{(2)}$. This prime ideal is $P_{\emptyset}(K_3, P_3)=I_2(\bdX)$, where $\bdX$ is the $3\times 3$ generic matrix. It follows from \cite[Theorem 4.3.6]{zbMATH07539232} that
    $I_2^{(2)}(\bdX)=I_2^2(\bdX)+I_3(\bdX)$.
\end{Remark}

\section{Blowup algebras}
\label{sec:blowup}

In this section, we will use algebraic properties of the initial algebras and the Sagbi basis theory to study the regularities of the blowup algebras of the ideal $\calJ_{K_m,P_n}$. Our approach will involve a combination of combinatorial optimization techniques to analyze the related algebraic invariants.

\begin{Lemma}
    \label{lem:match_number}
    For the bipartite graph $H$ introduced in \Cref{set:path}, we have
    \[
        \match(H)= (m-1)\floor{\frac{n}{2}}+\floor{\frac{n-1}{2}},
    \]
    where $\lfloor\frac{n}{2}\rfloor$ is the largest integer $\le \frac{n}{2}$.
\end{Lemma}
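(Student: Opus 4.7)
The plan is to prove the equality by exhibiting a matching of the claimed size and then showing that no larger matching can exist.

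For the lower bound, I would construct an explicit matching $M$ in $H$ as follows. For each odd $k \in [n-1]$, include in $M$ the $m-1$ edges $\{x_{i,k}, x_{i+1,k+1}\}$ with $i=1,\ldots,m-1$ (a perfect staircase matching inside $H_k$ that avoids the two isolated vertices of $H_k$); for each even $k \in [n-1]$, include the single edge $\{x_{1,k}, x_{m,k+1}\}$. The key observation is that in each odd-indexed slab the above matching leaves exactly $x_{m,k}$ free in column $k$ and exactly $x_{1,k+1}$ free in column $k+1$, so the even-indexed edges fit precisely into these free slots without any vertex conflict. Since $[n-1]$ contains $\lfloor n/2 \rfloor$ odd indices and $\lfloor (n-1)/2 \rfloor$ even indices, this $M$ has the desired cardinality.

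For the upper bound, given any matching $M$ of $H$, I would decompose it by slab, setting $M_k := M \cap E(H_k)$ so that $|M|=\sum_{k=1}^{n-1}|M_k|$. Two simple inequalities will suffice: \emph{(i)} $|M_k|\le m-1$, because in $H_k$ the vertices $x_{m,k}$ and $x_{1,k+1}$ are isolated (they cannot satisfy the index condition $i<i'$); and \emph{(ii)} $|M_k|+|M_{k+1}|\le m$ for $1\le k\le n-2$, because the $|M_k|$ right-endpoints of $M_k$ in column $k+1$ lie in $\{x_{2,k+1},\ldots,x_{m,k+1}\}$ while the $|M_{k+1}|$ left-endpoints of $M_{k+1}$ lie in $\{x_{1,k+1},\ldots,x_{m-1,k+1}\}$, and these two disjoint subsets of the $m$-vertex column cannot overlap.

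To finish, I would group the slab indices $1,2,\ldots,n-1$ into consecutive pairs $\{1,2\},\{3,4\},\ldots$, with a leftover singleton $\{n-1\}$ appearing exactly when $n$ is even. Applying \emph{(ii)} on each complete pair and \emph{(i)} on the possible leftover yields $|M|\le m\lfloor (n-1)/2\rfloor+\varepsilon(m-1)$, where $\varepsilon=1$ if $n$ is even and $\varepsilon=0$ if $n$ is odd; a short parity check confirms that this upper bound coincides with $(m-1)\lfloor n/2\rfloor+\lfloor (n-1)/2\rfloor$ in both cases. The only mildly delicate step is keeping the parity bookkeeping straight when pairing the slabs; everything else is immediate from the staircase structure of each $H_k$.
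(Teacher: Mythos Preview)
Your argument is correct. The explicit matching you build is exactly the one the paper writes down, so the lower bounds coincide verbatim.

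The upper bound, however, is obtained differently. The paper does not count slab by slab; instead it removes the two isolated vertices $x_{m,1}$ and $x_{1,n}$, notes that the remaining graph $H'$ is bipartite with parts $V_1',V_2'$ (the odd and even columns), and observes that the very matching just exhibited saturates every vertex of $V_2'$. Since a complete matching from $V_2'$ to $V_1'$ forces $\match(H)=\match(H')=|V_2'|$, the value drops out in one line with no further estimates or parity split. Your approach trades this global observation for the local inequality $|M_k|+|M_{k+1}|\le m$ coming from disjointness of endpoints in a shared column; this is a perfectly good substitute and has the mild advantage of not relying on the bipartition at all, at the cost of the extra bookkeeping when pairing up the $n-1$ slabs.
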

\begin{proof}
    We have mentioned in \Cref{rmk:graph_H} that $x_{m,1}$ and $x_{1,n}$ are two isolated vertices of $H$. Let $H'$ be the induced subgraph $H\setminus \{x_{m,1},x_{1,n}\}$. This is a bipartite graph with a bipartition $V_1'\sqcup V_2'$, where $V_i'=V_i\cap V(H')$ for $i=1,2$. Notice that we have a complete matching from $V_2'$ to $V_1'$, given by
    \begin{equation}
        \{\{x_{i,j},x_{i+1,j+1}\}: i\in [m-1],\text{$j$ is odd}\} \cup \{\{x_{1,j},x_{m,j+1}\}:\text{$j$ is even}\};
        \label{eqn:match}
    \end{equation}
    see also \Cref{Fig:7}. Therefore, we have the desired matching number, by counting the number of edges in \cref{eqn:match}.
\end{proof}

\begin{figure}
    \begin{minipage}{0.48\textwidth}
        \centering
        \begin{tikzpicture}[thick, scale=0.8, every node/.style={scale=0.98}]
            \draw[very thick](1,4)--(2,3);
            \draw[very thick](1,3)--(2,2);
            \draw[very thick](1,2)--(2,1);
            \draw[very thick](2,4)--(3,1);
            \draw[very thick](3,4)--(4,3);
            \draw[very thick](3,3)--(4,2);
            \draw[very thick](3,2)--(4,1);
            \draw[very thick](4,4)--(5,1);
            \draw[very thick](5,4)--(6,3);
            \draw[very thick](5,3)--(6,2);
            \draw[very thick](5,2)--(6,1);

            \shade [shading=ball, ball color=black] (1,1) circle (.07) node [left] {\scriptsize $x_{4,1}$};
            \shade [shading=ball, ball color=black] (2,1) circle (.07);
            \shade [shading=ball, ball color=black] (3,1) circle (.07);
            \shade [shading=ball, ball color=black] (4,1) circle (.07);
            \shade [shading=ball, ball color=black] (5,1) circle (.07);
            \shade [shading=ball, ball color=black] (6,1) circle (.07);
            \shade [shading=ball, ball color=black] (1,2) circle (.07);
            \shade [shading=ball, ball color=black] (2,2) circle (.07);
            \shade [shading=ball, ball color=black] (3,2) circle (.07);
            \shade [shading=ball, ball color=black] (4,2) circle (.07);
            \shade [shading=ball, ball color=black] (5,2) circle (.07);
            \shade [shading=ball, ball color=black] (6,2) circle (.07);
            \shade [shading=ball, ball color=black] (1,3) circle (.07);
            \shade [shading=ball, ball color=black] (2,3) circle (.07);
            \shade [shading=ball, ball color=black] (3,3) circle (.07);
            \shade [shading=ball, ball color=black] (4,3) circle (.07);
            \shade [shading=ball, ball color=black] (5,3) circle (.07);
            \shade [shading=ball, ball color=black] (6,3) circle (.07);
            \shade [shading=ball, ball color=black] (1,4) circle (.07);
            \shade [shading=ball, ball color=black] (2,4) circle (.07);
            \shade [shading=ball, ball color=black] (3,4) circle (.07);
            \shade [shading=ball, ball color=black] (4,4) circle (.07);
            \shade [shading=ball, ball color=black] (5,4) circle (.07);
            \shade [shading=ball, ball color=black] (6,4) circle (.07)  node [right] {\scriptsize$x_{1,6}$};
            \node at (6,4) {$\times$};
            \node at (1,1) {$\times$};
        \end{tikzpicture}
        \subcaption*{$(m,n)=(4,6)$}
    \end{minipage}\hfill
    \begin{minipage}{0.48\textwidth}
        \centering
        \begin{tikzpicture}[thick, scale=0.8, every node/.style={scale=0.98}]]
            \draw[very thick](1,4)--(2,3);
            \draw[very thick](1,3)--(2,2);
            \draw[very thick](1,2)--(2,1);
            \draw[very thick](2,4)--(3,1);
            \draw[very thick](3,4)--(4,3);
            \draw[very thick](3,3)--(4,2);
            \draw[very thick](3,2)--(4,1);
            \draw[very thick](4,4)--(5,1);
            \draw[very thick](5,4)--(6,3);
            \draw[very thick](5,3)--(6,2);
            \draw[very thick](5,2)--(6,1);
            \draw[very thick](6,4)--(7,1);

            \shade [shading=ball, ball color=black] (1,1) circle (.07) node [left] {\scriptsize$x_{4,1}$};
            \shade [shading=ball, ball color=black] (2,1) circle (.07);
            \shade [shading=ball, ball color=black] (3,1) circle (.07);
            \shade [shading=ball, ball color=black] (4,1) circle (.07);
            \shade [shading=ball, ball color=black] (5,1) circle (.07);
            \shade [shading=ball, ball color=black] (6,1) circle (.07);
            \shade [shading=ball, ball color=black] (7,1) circle (.07);
            \shade [shading=ball, ball color=black] (1,2) circle (.07);
            \shade [shading=ball, ball color=black] (2,2) circle (.07);
            \shade [shading=ball, ball color=black] (3,2) circle (.07);
            \shade [shading=ball, ball color=black] (4,2) circle (.07);
            \shade [shading=ball, ball color=black] (5,2) circle (.07);
            \shade [shading=ball, ball color=black] (6,2) circle (.07);
            \shade [shading=ball, ball color=black] (7,2) circle (.07);
            \shade [shading=ball, ball color=black] (1,3) circle (.07);
            \shade [shading=ball, ball color=black] (2,3) circle (.07);
            \shade [shading=ball, ball color=black] (3,3) circle (.07);
            \shade [shading=ball, ball color=black] (4,3) circle (.07);
            \shade [shading=ball, ball color=black] (5,3) circle (.07);
            \shade [shading=ball, ball color=black] (6,3) circle (.07);
            \shade [shading=ball, ball color=black] (7,3) circle (.07);
            \shade [shading=ball, ball color=black] (1,4) circle (.07);
            \shade [shading=ball, ball color=black] (2,4) circle (.07);
            \shade [shading=ball, ball color=black] (3,4) circle (.07);
            \shade [shading=ball, ball color=black] (4,4) circle (.07);
            \shade [shading=ball, ball color=black] (5,4) circle (.07);
            \shade [shading=ball, ball color=black] (6,4) circle (.07);
            \shade [shading=ball, ball color=black] (7,4) circle (.07) node [right] {\scriptsize$x_{1,7}$};
            \node at (7,4) {$\times$};
            \node at (1,1) {$\times$};
        \end{tikzpicture}
        \subcaption*{$(m,n)=(4,7)$}
    \end{minipage}
    \caption{A complete matching of $H$}
    \label{Fig:7}
\end{figure}
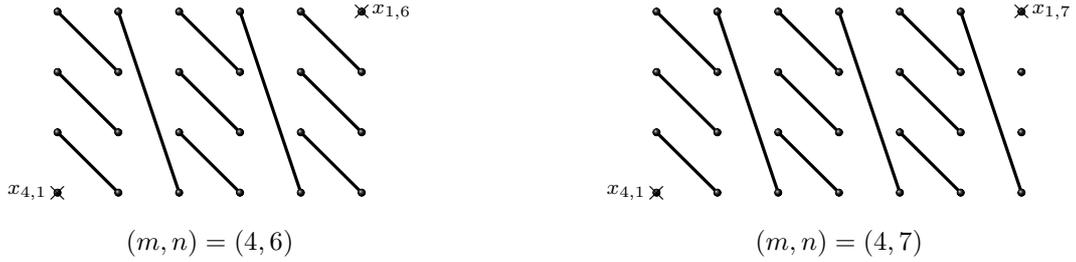

\begin{Theorem}
    \label{regRees}
    Under the assumptions in   \Cref{set:path}, we have
    \[
        \reg(\calR(\calJ_{K_m,P_n}))=\reg(\calR(\ini_\tau(\calJ_{K_m,P_n})))=
        (m-1)\floor{\frac{n}{2}}+\floor{\frac{n-1}{2}}.
    \]
\end{Theorem}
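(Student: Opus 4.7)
The plan is to split the argument into two parts: reducing the computation to the initial algebra, and then computing the regularity of that initial algebra combinatorially.

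For the first equality, I would invoke the Sagbi deformation established in Lemma~\ref{lem:sagbi_path}, which yields $\ini_{\tau'}(\calR(\calJ_{K_m,P_n}))=\calR(\ini_\tau(\calJ_{K_m,P_n}))=\calR(I(H))$. A Sagbi degeneration preserves the Hilbert series, and by Corollary~\ref{cor:CM} both Rees algebras are standard graded Cohen--Macaulay domains. For any Cohen--Macaulay standard graded $\KK$-algebra the Castelnuovo--Mumford regularity is determined by the Hilbert series (it equals the degree of the $h$-polynomial, or equivalently $a$-invariant $+$ Krull dimension). Consequently the two Rees algebras share the same regularity, which gives the first equality.

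For the remaining equality $\reg(\calR(I(H)))=(m-1)\lfloor n/2\rfloor+\lfloor(n-1)/2\rfloor$, I would compute the regularity of the toric ring $\calR(I(H))$ directly. The key claim is that this regularity equals the matching number $\match(H)$. The lower bound $\reg(\calR(I(H)))\ge\match(H)$ can be produced by exhibiting an explicit non-vanishing socle element in the top graded piece of the canonical module of $\calR(I(H))$ (equivalently, an interior lattice point in an appropriate dilate of the Rees polytope), built from the explicit maximum matching described in Lemma~\ref{lem:match_number}. For the upper bound, since $H$ is chordal bipartite (Lemma~\ref{lem:chordal_bipartite}), the affine semigroup ring $\calR(I(H))$ admits a compressed polytopal description, and the $h^*$-polynomial degree is bounded by $\match(H)$; this is where the combinatorial optimization of the matching problem, together with a K\"onig-type duality in bipartite graphs, enters. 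Combining with Lemma~\ref{lem:match_number} to evaluate $\match(H)$ then yields the closed formula.

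The hardest step will be the upper bound $\reg(\calR(I(H)))\le\match(H)$. The lower bound only requires producing one specific non-vanishing cohomology class from the matching, but the upper bound demands a uniform combinatorial bound controlling the top degree of the $h^*$-polynomial in terms of $\match(H)$. I plan to exploit the block decomposition of $H$ into the subgraphs $H_1,\dots,H_{n-1}$ described in Setting~\ref{set:path} and Remark~\ref{rmk:graph_H}: since $H$ is chordal bipartite and each $H_k$ is a complete bipartite-like piece, the problem reduces to controlling how matchings on adjacent pairs of columns glue together, which is exactly the combinatorial optimization governing the matching formula from Lemma~\ref{lem:match_number}.
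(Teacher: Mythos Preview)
Your handling of the first equality is essentially the paper's argument: the paper packages the step ``Sagbi degeneration preserves the Hilbert series, and equal Hilbert series plus Cohen--Macaulayness forces equal regularity'' by citing \cite[Theorem 3.2]{MR4405525}, which is proved by exactly that mechanism.

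For the second equality the paper takes a much shorter route. The identity $\reg(\calR(I(H)))=\match(H)$ is a known theorem for the edge ideal of \emph{any} bipartite graph: the paper simply invokes \cite[Theorem 4.2]{MR3884545} (or equivalently \cite[Theorems 7.1.8 and 14.3.55]{MR3362802}) and then reads off $\match(H)$ from \Cref{lem:match_number}. You have correctly identified the target $\reg=\match(H)$, but you are proposing to reprove this cited theorem from scratch, and the upper-bound half of your sketch has a genuine gap. Chordal bipartiteness of $H$ does \emph{not} by itself yield a ``compressed polytopal description'' of $\calR(I(H))$, and the zone decomposition $E(H)=\bigcup_k E(H_k)$ does not directly bound the degree of the $h^*$-polynomial in the manner you suggest. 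The known proofs of $\reg(\calR(I(G)))=\match(G)$ for bipartite $G$ proceed via K\"onig's theorem together with the lattice-point description of the canonical module of the normal semigroup ring $\calR(I(G))$, not via chordal structure. Your lower-bound idea (producing an interior lattice point from a maximum matching) is fine and is indeed one direction of the cited result, but since the full equality is already in the literature you would do better simply to cite it, as the paper does.
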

\begin{proof} 
    Note that the ideal $\calJ_{K_m,P_n}$ satisfies the following conditions:
    \begin{enumerate}[a]
        \item the natural generators of $\calJ_{K_m,P_n}$ form a Gr\"obner basis with respect to the term order $\tau$ by \Cref{rmk:graph_H};
        \item for each $t\ge 1$,  $\ini_\tau(\calJ^t_{K_m,P_n})= (\ini_\tau(\calJ_{K_m,P_n}))^t$ by \Cref{lem:sagbi_path};
        \item the Rees algebras $\calR(\ini_\tau(\calJ_{K_m,P_n}))$ is Cohen--Macaulay  by \Cref{cor:CM}. 
    \end{enumerate}
    It follows from \cite[Theorem 3.2]{MR4405525} that
    $\reg(\calR(\calJ_{K_m,P_n}))=\reg(\calR(\ini_\tau(\calJ_{K_m,P_n})))$. 

    We have seen that $\ini_\tau(\calJ_{K_m,P_n})=I(H)$ is the edge ideal of the bipartite graph $H$ constructed in \Cref{set:path}. It follows from \cite[Theorem 4.2]{MR3884545}, or implicitly from \cite[Theorems 7.1.8 and 14.3.55]{MR3362802}, that $\reg(\calR(I(H)))=\match(H)$. The only remaining step is to apply
    \Cref{lem:match_number}.
\end{proof} 

In the following, we will use the $a$-invariant of the Cohen--Macaulay special fiber ring $\calF(\calJ_{K_m,P_n})$ to compute its regularity; see \cite[Theorem 6.4.1]{MR3362802}. Recall that if $A$ is a standard graded $\KK$-algebra, the \emph{$a$-invariant} of $A$, denoted by $a(A)$, is the degree, as a rational function, of the Hilbert series of $A$.

\begin{Remark}
    Notice that $\ini_{\tau}(\calF(\calJ_{K_m,P_n}))=\KK[H]$ for the graph $H$ introduced in \Cref{set:path}.
    When $m=2$ or $3$, the graph $H$ is acyclic; see also \Cref{rmk:graph_H}. It follows from \Cref{lem:defining_ideal} that the presentation ideal of $\KK[H]$ is the zero ideal. In other words, $\calF(\ini_\tau(\calJ_{K_m,P_n}))$ is a polynomial ring in these cases. In particular, its regularity is zero. Since $\ini_\tau \calF(\calJ_{K_m,P_n})$ and $\calF(\calJ_{K_m,P_n})$ are Cohen--Macaulay and have the same Hilbert series, the regularity of the special fiber ring $\calF(\calJ_{K_m,P_n})$ is also zero.
\end{Remark}

By the previous observation, we will focus on the case when $m\ge 4$. The technical computation of the $a$-invariant of $\calF(\calJ_{K_m,P_n})$ is given in \Cref{lem:a_inv}. Before starting the proof, let us briefly introduce its strategy, which consists of two different combinatorial approaches. 

The first approach deals with directed graphs, as in 
\Cref{lem:a_inv_directed_cut}. 

\begin{Definition}
    Let $D$ be a directed graph with the vertex set $V(D)$ and the edge set $E(D)$. For every subset $A\subseteq V(D)$, define $\delta^+(A) \coloneqq \Set{ (z, w) \in E(D): z \in A, w \notin A}$ to be the set of edges leaving the vertex set $A$ and $\delta^-(A)$ to be the set of edges entering the vertex set $A$. The edge set $\delta^+(A)$ is a \emph{directed cut} of $D$ if $\emptyset \ne A \subsetneq V(D)$ and $\delta^-(A) = \emptyset$. 
\end{Definition}

\begin{Lemma}
    [{\cite[Theorem 11.3.2]{MR3362802}}]
    \label{lem:a_inv_directed_cut}
    Let $G$ be a connected bipartite graph with  bipartition $V_1\sqcup V_2$. If $G$ is regarded as a directed graph with all its arrows leaving the vertex set $V_1$, then the following two numbers are equal: 
    \begin{enumerate}[a]
        \item 
            $-a(\KK[G])$, minus the $a$-invariant of the edge subring $\KK[G]$; 
        \item 
            the maximum number of edge disjoint directed cuts of $G$. 
    \end{enumerate}
\end{Lemma}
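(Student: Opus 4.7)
The plan is to combine the Danilov--Stanley description of the canonical module with the classical min--max theorem of Lucchesi--Younger on directed cuts. First, I would recall that, since $G$ is bipartite, the edge subring $\KK[G]$ is a normal (hence Cohen--Macaulay) affine semigroup ring, where $\calA_G \coloneqq \{e_u + e_v : \{u,v\} \in E(G)\}\subset \ZZ^{V(G)}$. By the Danilov--Stanley formula, the graded canonical module of $\KK[G]$ is
\[
\omega_{\KK[G]} \;\cong\; \bigoplus_{\alpha \in \relint(\RR_+\calA_G)\,\cap\,\ZZ\calA_G} \KK\cdot t^{\alpha},
\]
and in the natural grading (each edge generator has degree $1$) the degree of $t^{\alpha}$ equals the total multiplicity $\sum_e n_e$ of any representation $\alpha = \sum_e n_e(e_u+e_v)$. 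Consequently,
\[
-a(\KK[G]) \;=\; \min\bigl\{ \deg\alpha : \alpha \in \relint(\RR_+\calA_G) \cap \ZZ\calA_G \bigr\}.
\]

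Second, I would interpret the relative-interior condition combinatorially. Orienting every edge of $G$ from $V_1$ to $V_2$ yields the digraph $D$. For each $A\subseteq V(G)$ with $\delta^-(A)=\emptyset$, the functional
\[
\phi_A(\alpha) \;=\; \sum_{v\in V_1\cap A}\alpha_v \;-\; \sum_{v\in V_2\cap A}\alpha_v
\]
evaluates, on a lattice point $\alpha = \sum_e n_e(e_u+e_v)$, to the total multiplicity of the edges of $\delta^+(A)$ appearing in $\alpha$; hence $\phi_A \ge 0$ is valid on $\RR_+\calA_G$. The choices $A=\{v\}$ with $v\in V_1$ and $A=V(G)\setminus\{v\}$ with $v\in V_2$ recover the non-negativity inequalities $\alpha_v \ge 0$, and by the standard polyhedral description of the edge cone of a bipartite graph the family $\{\phi_A\}$ indexed by directed cuts cuts out every facet of $\RR_+\calA_G$. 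Therefore $\alpha$ lies in the relative interior exactly when its edge support meets every directed cut of $D$, i.e., forms a \emph{dijoin}.

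Third, this identifies $-a(\KK[G])$ with the minimum cardinality of a dijoin in $D$: a minimum dijoin $F$ produces the interior lattice point $\sum_{e\in F}(e_u+e_v)$ of degree $|F|$, while conversely the edge support of any interior lattice point is a dijoin of cardinality at most its degree. The conclusion then follows from the Lucchesi--Younger theorem, which states that in any weakly connected digraph the minimum cardinality of a dijoin equals the maximum number of pairwise edge-disjoint directed cuts. Since $G$ is connected, $D$ is weakly connected, and we conclude that $-a(\KK[G])$ equals the maximum number of edge-disjoint directed cuts of $D$.

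The main obstacle is the second step: verifying that the directed-cut inequalities really furnish the complete irredundant facet description of $\RR_+\calA_G$ for a bipartite $G$, so that relative-interior membership is equivalent to the dijoin condition on the support. Once this polyhedral lemma is in hand, the rest of the argument is routine bookkeeping combined with Lucchesi--Younger, which is invoked as a black box.
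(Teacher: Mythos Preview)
The paper does not prove this lemma; it is quoted as \cite[Theorem 11.3.2]{MR3362802} and used as a black box. So there is no ``paper's own proof'' to compare against --- your sketch is in effect a proposed proof of a result the authors simply cite. That said, your outline reproduces the standard argument from the cited reference: the Danilov--Stanley description of $\omega_{\KK[G]}$ identifies $-a(\KK[G])$ with the minimum degree of an interior lattice point of the edge cone, the relative-interior condition is read off via the directed-cut functionals, and Lucchesi--Younger supplies the min--max equality with the maximum number of edge-disjoint directed cuts.

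One remark on the obstacle you flag. Only one implication actually requires the full facet description. For ``$\alpha\in\relint(\RR_+\calA_G)\Rightarrow\operatorname{supp}(\alpha)$ is a dijoin'' it suffices that each $\phi_A$ is a valid inequality not vanishing identically on the cone (clear, since $\phi_A$ evaluates to $1$ on any generator coming from an edge of $\delta^+(A)$); strict positivity on $\relint$ then forces the support to meet every directed cut. The facet description is needed only for the converse, ``dijoin support $\Rightarrow$ relative interior'': here one must know that every facet-defining functional is (up to the linear equations cutting out $\operatorname{aff}(\RR_+\calA_G)$) one of the $\phi_A$. This is the irreducible representation of the edge cone of a connected bipartite graph, proved earlier in the same chapter of \cite{MR3362802}; once that is in hand, your argument goes through.
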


The above lemma invites us to find edge disjoint directed cuts of $H$. A natural choice of such direct cuts gives the desired $a$-invariant that we need. However, proving that this is the value that we are looking for is a different story.

In the second approach, we start with the vector space $V\coloneqq \Mat_{m\times n}(\QQ)$ over $\QQ$. It has a canonical basis $\{\bde_{i,j}: i\in [m],j\in [n]\}$, where $\bde_{i,j}=\left(e_{k,\ell}^{i,j}\right)_{\substack{1\le k \le m\\ 1\le \ell \le n}}\in V$ with
\[
    e_{k,\ell}^{i,j}=\begin{cases}
        1, &\text{if $(k,\ell)=(i,j)$,}\\
        0, & \text{otherwise.}
    \end{cases}
\]
Let $H'$ be the graph obtained from $H$ by removing the two isolated vertices $x_{m,1}$ and $x_{1,n}$.
From this graph, we introduce a set of vectors
\[
    \calA\coloneqq \Set{\bde_{i,j}+\bde_{i',j'} : \{x_{i,j},x_{i',j'}\}\in E(H')} \subset V.
\]
The set $\RR_+ \calA$ is called the \emph{edge cone} of $H'$. At the same time, the \emph{shift polyhedron} of the edge cone of $H'$ is 
\[
    \calQ\coloneqq \conv(\Mat_{m\times n}(\ZZ) \cap \ri(\RR_{+}\calA)),
\]
where $\ri(\RR_+\calA)$ is the interior of $\RR_+\calA$ relative to the affine hull of $\RR_+\calA$; see also \cite[Corollary 11.2.4]{MR3362802}.
We will use the fact that 
\begin{equation}
    -a(\KK[H'])=\min \{\,|\bdv|/2:\bdv\in \calQ\,\} 
    \label{eqn:a_inv_b}
\end{equation}
by \cite[Theorem 11.3.1]{MR3362802}, where $|\bdv|\coloneqq \sum_{i=1}^m\sum_{j=1}^{n} v_{i,j}$ for $\bdv=(v_{i,j})\in V$.

To investigate the shift polyhedron in detail, let $V^*$ be the space of linear functions on $V$. Each $\bdF\in V^*$ defines a hyperplane $H_{\bdF}\coloneqq \{\bdv\in V: \bdF(\bdv)=0\}$ and a half-space $H_{\bdF}^+ \coloneqq \{\bdv\in V: \bdF(\bdv) \ge 0\}$. Let $\Set{\bdE_{i,j}: i\in [m],j\in [n]}\subset V^*$ be the dual basis with respect to $\{\bde_{i,j}: i\in [m],j\in [n]\}$. For simplicity, we will  represent the elements in $V^*$  as $m\times n$ matrices like $\bdF=(f_{i,j})\in V^*$. For every such function $\bdF\in V^*$ and every vector $\bdv=(v_{i,j})\in V$, we have $\bdF(\bdv)=\sum_{i,j}f_{i,j}(v_{i,j})\in \QQ$.

We have a quick remark.

\begin{Remark}
    \label{rmk:matrix_A}
    Fix a matrix $\bdA=(a_{i,j})$ in $V^{*}$, where 
    \[
        a_{i,j}=
        \begin{cases}
            0, & \text{if $(i,j)=(1,n)$ or $(m,1)$,}\\
            1, & \text{if  $j$ is otherwise  odd,}\\
            -1, & \text{if  $j$ is otherwise   even,}
        \end{cases}
    \]
    see \Cref{fig:matrix_A}. 
    \begin{figure}[tbp]
        \parbox[b]{.45\textwidth}{
            \centering
            $\displaystyle \begin{bmatrix}
                1 &-1 &1 &-1 &1 &-1& 0 \\
                1 &-1 &1 &-1 &1 &-1& 1 \\
                1 &-1 &1 &-1 &1 &-1& 1 \\
                1 &-1 &1 &-1 &1 &-1& 1 \\
                1 &-1 &1 &-1 &1 &-1& 1 \\
                0 &-1 &1 &-1 &1 &-1& 1 
            \end{bmatrix}$
            \subcaption*{$(m,n)=(6,7)$ case}
        }
        \parbox[b]{.45\textwidth}{
            \centering
            $\displaystyle \begin{bmatrix}
                1 &-1 &1 &-1 &1 &-1 &1 &0 \\
                1 &-1 &1 &-1 &1 &-1 &1 &-1 \\
                1 &-1 &1 &-1 &1 &-1 &1 &-1 \\
                1 &-1 &1 &-1 &1 &-1 &1 &-1 \\
                1 &-1 &1 &-1 &1 &-1 &1 &-1 \\
                0 &-1 &1 &-1 &1 &-1 &1 &-1
            \end{bmatrix}$
            \subcaption*{$(m,n)=(6,8)$ case}
        }
        \caption{The matrix $\bdA$ in $V^{*}$}
        \label{fig:matrix_A}
    \end{figure}
    It is easy to see that $\calA \subseteq H_{\bdA}\cap H_{\bdE_{1,n}}\cap H_{\bdE_{m,1}}$.
    Since $H'$ is a connected bipartite graph,  we get  $\dim(\KK[H'])=mn-3$  by \cite[Corollary 10.1.21]{MR3362802}. At the same time, the integral points of $\calQ$ define the canonical module of $\KK[H']$ by \cite[Proposition 11.2.1]{MR3362802}.
    Therefore, $H_{\bdA}\cap H_{\bdE_{1,n}}\cap H_{\bdE_{m,1}}$ is the minimal linear space that contains $\calQ$.
\end{Remark}

The two approaches described above will give us a lower bound and an upper bound of $-a(\KK[H'])$ respectively. Since they coincide, we obtain the exact value. Now, we carry out this strategy and start the real computation.

\begin{Proposition}
    \label{lem:a_inv}
    Suppose that $m\ge 4$ is an integer and $H$ is the bipartite graph introduced  in \Cref{set:path}. Then we have
    \[
        -a(\KK[H])  =
        \begin{cases}
            m\cdot \frac{n}{2}, &\text{if $n$ is even}, \\
            m\cdot \frac{n+1}{2}-2, & \text{if $n$ is odd}.
        \end{cases}
    \]
\end{Proposition}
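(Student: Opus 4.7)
The plan is to establish matching upper and lower bounds on $-a(\KK[H])$ using the two combinatorial frameworks set up above. First, since the isolated vertices $x_{m,1}$ and $x_{1,n}$ contribute nothing to the edge subring, one has $\KK[H]=\KK[H']$, and it suffices to compute $-a(\KK[H'])$.

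For the \emph{lower bound}, I would invoke \Cref{lem:a_inv_directed_cut} and exhibit the required number of edge-disjoint directed cuts in $H'$, oriented so that every arrow leaves $V_1$. The natural family is the singletons: each non-isolated $v\in V_1$ gives a directed cut $\delta^+(\{v\})$ consisting of all edges incident to $v$, and different $V_1$-singletons are automatically edge-disjoint because every edge of the bipartite graph $H'$ has a unique endpoint in $V_1$. This produces $|V_1'|$ cuts, which equals $m(n+1)/2-2$ when $n$ is odd, matching the formula on the nose. When $n$ is even, $|V_1'|=mn/2-1$ is one short; the missing cut is produced by exploiting the degree-one vertex $x_{2,n}\in V_2$, whose unique neighbor is $x_{1,n-1}$. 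Replace the singleton $\{x_{1,n-1}\}$ by the two sets $A=\{x_{1,n-1},x_{2,n}\}$ and $A'=V(H')\setminus\{x_{2,n}\}$: a direct inspection shows $\delta^-(A)=\delta^-(A')=\emptyset$, that $\delta^+(A)$ consists of $x_{1,n-1}$'s out-arrows minus the edge to $x_{2,n}$, while $\delta^+(A')$ is precisely that single missing edge. The resulting collection of $|V_1'|+1=mn/2$ cuts is pairwise edge-disjoint.

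For the \emph{upper bound}, I would use the formula \eqref{eqn:a_inv_b} and construct an explicit integer matrix $\bdv^*\in\calQ$ with $|\bdv^*|/2$ equal to the claimed value. The ansatz is to set $v^*_{i,j}=1$ at most positions, fix $v^*_{m,1}=v^*_{1,n}=0$ at the isolated corners, and boost certain entries near the positions $(1,n-1)$ and $(m,2)$ in order to enforce the linear relation $\bdA(\bdv^*)=0$ recorded in \Cref{rmk:matrix_A}. For $n$ even, one boost of $+1$ at each of $(1,n-1)$ and $(m,2)$ suffices; for $n$ odd, a total of $m-2$ additional units must be added to entries in column $2$ to rebalance $V_1$ and $V_2$. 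A straightforward count then gives $|\bdv^*|=mn$ when $n$ is even and $|\bdv^*|=m(n+1)-4$ when $n$ is odd, matching the desired values of $2\cdot(-a(\KK[H]))$ in each case.

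The main obstacle is verifying that $\bdv^*$ actually lies in the \emph{relative interior} of the edge cone $\RR_+\calA$, that is, writing $\bdv^*=\sum_e\lambda_e\,e$ with $\lambda_e>0$ for every edge of $H'$. This requires choosing positive weights $\lambda^k_{i,i'}$ on each layer $H_k$ that are simultaneously consistent with the row and column marginals imposed by $\bdv^*$. The tight constraints sit at low-degree boundary vertices such as $x_{m-1,1}$ and $x_{2,n}$, where the marginal equations essentially pin down individual $\lambda$'s; the boosts at $(1,n-1)$ and $(m,2)$ (and the further boosts used for odd $n$) are calibrated precisely to leave enough slack for every remaining edge to receive a strictly positive weight. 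Performing this layer-by-layer bookkeeping — and handling the two parities of $n$ separately — is expected to be the most delicate part of the argument.
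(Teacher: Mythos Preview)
Your lower bound via edge-disjoint directed cuts, including the extra split at $x_{2,n}$ when $n$ is even, and your candidate $\bdv^*$ for the upper bound both match the paper (the paper concentrates the column-$2$ boost at the single entry $(m,2)$, taking $\hat u_{m,2}=2$ for $n$ even and $\hat u_{m,2}=m-2$ for $n$ odd, with $\hat u_{1,n-1}=2$ in both parities). Where you genuinely diverge is the relative-interior check. You plan to realize $\bdv^*$ as a strictly positive combination of all edge vectors; this is a correct criterion, but carrying it out means solving a coupled positivity system across all $n-1$ zones with tight constraints propagating inward from the degree-one corners---exactly the ``delicate bookkeeping'' you flag. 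The paper sidesteps this with a supporting-hyperplane argument: it first exhibits $\hat{\bdu}$ as the degree vector of a single subgraph $\widetilde H_1$ of $H'$ (so $\hat{\bdu}\in\RR_+\calA$), and then constructs two further subgraphs $\widetilde H_2,\widetilde H_3$ with the \emph{same} degree vector such that $\widetilde H_1\cup\widetilde H_2\cup\widetilde H_3$ is connected. Since every edge $\{x_{i,j},x_{i',j+1}\}$ of any $\widetilde H_k$ forces $f_{i,j}+f_{i',j+1}=0$ for any supporting hyperplane $H_{\bdF}$ through $\hat{\bdu}$, connectivity pins $\bdF$ down to a scalar multiple of the matrix $\bdA$ from \Cref{rmk:matrix_A}, whence $H_{\bdF}\supseteq\RR_+\calA$ and $\hat{\bdu}\in\ri(\RR_+\calA)$. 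Your route is conceptually direct but leaves a genuine flow computation to be done; the paper's route trades this for the purely combinatorial task of drawing three explicit subgraphs per parity, with no inequalities to solve.
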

\begin{proof}
    For the induced subgraph $H'$, we have $\KK[H]=\KK[H']$. Thus, we will prove 
    \begin{equation}
        -a(\KK[H'])  =
        \begin{cases}
            m\cdot \frac{n}{2}, &\text{if $n$ is even}, \\
            m\cdot \frac{n+1}{2}-2, & \text{if $n$ is odd}
        \end{cases}
        \label{eqn:a_inv}
    \end{equation}
    by considering the following two steps.
    \begin{enumerate}[A]
        \item First, we show that LHS $\ge$ RHS in \cref{eqn:a_inv}. Let $V'=V(H')$, $V'_1=\{x_{i,j}\in V':i\in [m],\text{$j$ is odd}\}$ and  $V_2'=V'\setminus V_1'$. Then $H'$ is a connected bipartite graph with  bipartition $V_1'\sqcup V_2'$. By \Cref{lem:a_inv_directed_cut}, we regard $H'$ as a directed graph with all its arrows leaving the vertex set $V_1'$. For each $u\in V_1'$, the directed cut $\delta^+(\{u\})$ is the  set of edges leaving the vertex $u$. Since
            \[
                E(H') = \bigsqcup_{u\in V_1'} \delta^+(\{u\})
            \]
            is a disjoint union, we immediately have
            \[
                -a(\KK[H'])\ge |V_1'|=
                \begin{cases}
                    m\cdot \frac{n}{2}-1, &\text{if $n$ is even}, \\
                    m\cdot \frac{n+1}{2}-2, & \text{if $n$ is odd}
                \end{cases}    
            \]
            from 
            \Cref{lem:a_inv_directed_cut}. 

            At the same time, when $n$ is even, we consider the two special vertices $u_1=x_{1,n-1}\in V_1'$ and $u_2=x_{2,n}\in V_2'$. Notice that 
            \[
                \delta^+(\{u_1,u_2\})=\Set{(u_1,x_{i,n}):3\le i\le n}
            \]
            and
            \[
                \delta^+(V'\setminus \{u_2\})=\Set{(u_1,u_2)}
            \]
            are two directed cuts. Thus, 
            \[
                \delta^+(\{u_1\})=\delta^+(\{u_1,u_2\})\sqcup \delta^+(V'\setminus \{u_2\}).
            \]
            As a result, when $n$ is even, we have additionally
            \[
                -a(\KK[H'])\ge |V_1'|+1= m\cdot \frac{n}{2}, 
            \]
            from
            \Cref{lem:a_inv_directed_cut}. In short, we have LHS $\ge$ RHS in \cref{eqn:a_inv}.

        \item Second, we prove that LHS $\le$ RHS in \cref{eqn:a_inv}. By the formula in \cref{eqn:a_inv_b}, it suffices to find a suitable $\hat{\bdu}\in \calQ$ such that $|\hat{\bdu}|$ is twice the integer in the RHS of \cref{eqn:a_inv}.  

            The candidate vector $\hat{\bdu}=(u_{i,j})$ in $V$ is given by
            \[
                u_{i,j}=
                \begin{cases}
                    0, &\text{if $(i,j)=(1,n)$ or $(m,1)$,}\\
                    2, &\text{if $(i,j)=(1,n-1)$,}\\
                    2, &\text{if $(i,j)=(m,2)$ and $n$ is even,}\\
                    m-2, &\text{if $(i,j)=(m,2)$ and $n$ is odd,}\\
                    1, & \text{otherwise;}
                \end{cases}
            \]
            see \Cref{fig:hat_u}. It is easy to verify  that $|\hat{\bdu}|$ satisfies the requirement.
            Therefore, it remains to show that $\hat{\bdu}$ belongs to $\calQ$.
            \begin{figure}[tbp]
                \parbox[b]{.45\textwidth}{
                    \centering
                    $\displaystyle \begin{bmatrix}
                        1 &1 &1 &1 &1 &2 &0 \\
                        1 &1 &1 &1 &1 &1 &1 \\
                        1 &1 &1 &1 &1 &1 &1 \\
                        1 &1 &1 &1 &1 &1 &1 \\
                        1 &1 &1 &1 &1 &1 &1 \\
                        0 &4 &1 &1 &1 &1 &1
                    \end{bmatrix}$
                    \subcaption*{$(m,n)=(6,7)$ case}
                }
                \parbox[b]{.45\textwidth}{
                    \centering
                    $\displaystyle \begin{bmatrix}
                        1 &1 &1 &1 &1 &1 &2 &0 \\
                        1 &1 &1 &1 &1 &1 &1 &1 \\
                        1 &1 &1 &1 &1 &1 &1 &1 \\
                        1 &1 &1 &1 &1 &1 &1 &1 \\
                        1 &1 &1 &1 &1 &1 &1 &1 \\
                        0 &2 &1 &1 &1 &1 &1 &1 
                    \end{bmatrix}$
                    \subcaption*{$(m,n)=(6,8)$ case}
                }
                \caption{The extremal vector $\hat{\bdu}$ in $V$}
                \label{fig:hat_u}
            \end{figure}

            First, we consider the case where $n$ is odd. We do this in two sub-steps.
            \begin{enumerate}[listparindent=1em, parsep=0pt]
                \item 
                In the first sub-step, we show that $\hat{\bdu}$ belongs to the polyhedron $\RR_+\calA$.

                    Let $\widetilde{H}$ be a subgraph of $H'$. By abuse of notation, the \emph{degree matrix} of  $\widetilde{H}$ is the $m\times n$ matrix $\bdD=(d_{i,j})$, where \[
                        \qquad
                        \qquad
                        \qquad
                        d_{i,j}=
                        \begin{cases}
                        0,& \text{if $(i,j)=(1,n)$ or $(m,1)$,}\\
                        \text{the degree of the vertex $x_{i,j}$ in $\widetilde{H}$,} &  \text{otherwise.}
                        \end{cases}
                    \]
                    We will construct subgraphs $\widetilde{H}$ of $H'$ such that the degree matrix of $\widetilde{H}$ is given by the $\hat{\bdu}$ above; we will call such subgraphs of \emph{$\hat{\bdu}$-type}. 

                    The first instance $\widetilde{H}_1$ of $\hat{\bdu}$-type can be constructed as follows. For simplicity, we will say that edges of the form $\{x_{i,j},x_{i',j+1}\}$ belong to the zone $\calZ_j$ for each 
                    $j\in  [n-1]$. For $\widetilde{H}_1$, the edges in zone $\calZ_1$ are
                    \[
                        \Set{\{x_{1,1},x_{m-1,2}\},\{x_{2,1},x_{m,2}\}, \{x_{3,1},x_{m,2}\},\dots,\{x_{m-1,1},x_{m,2}\}}.
                    \]
                    Note that it contains two long parallel edges. For $2\le j< n$, where $j$ is odd, there are only two  long parallel edges in the zone $\calZ_j$: $\{x_{1,j},x_{m-1,j+1}\}$ and $\{x_{2,j},x_{m,j+1}\}$. For $2\le j< n$ with $j$ being even, there are $m-2$ parallel slightly shorter edges in the zone $\calZ_j$: $\{x_{i,j},x_{i+2,j+1}\}$ with $1\le i\le m-2$. Finally,  we supplement the last zone $\calZ_{n-1}$ with the extra edge $\{x_{1,n-1},x_{2,n}\}$. Then, we get all the edges for the graph $\widetilde{H}_1$. At this point, the reader is invited to look at  the first graph
                    of \Cref{fig:H67}. Notice that $\hat{\bdu}=\sum_{\{x_{i,j},x_{i',j+1}\}\in E(\widetilde{H}_1)}(\bde_{i,j}+\bde_{i',j+1})$. Consequently, $\hat{\bdu}\in \RR_+\calA$. 
                    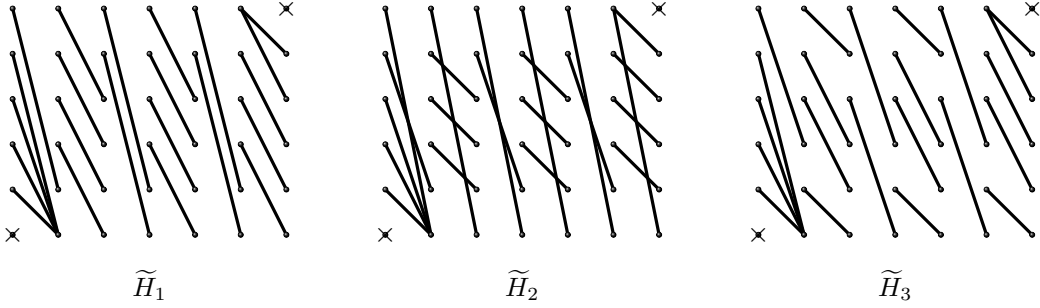
\begin{figure}[tbp]
                        \begin{minipage}{0.3\textwidth}
                            \centering
                            \begin{tikzpicture}[thick, scale=0.6, every node/.style={scale=0.98}]]
                                \draw[very thick](7,5)--(6,6);
                                \draw[very thick](7,4)--(6,6);
                                \draw[very thick](7,3)--(6,5);
                                \draw[very thick](7,2)--(6,4);
                                \draw[very thick](7,1)--(6,3);
                                \draw[very thick](5,6)--(6,2);
                                \draw[very thick](5,5)--(6,1);
                                \draw[very thick](5,4)--(4,6);
                                \draw[very thick](5,3)--(4,5);
                                \draw[very thick](5,2)--(4,4);
                                \draw[very thick](5,1)--(4,3);
                                \draw[very thick](3,6)--(4,2);
                                \draw[very thick](3,5)--(4,1);
                                \draw[very thick](3,4)--(2,6);
                                \draw[very thick](3,3)--(2,5);
                                \draw[very thick](3,2)--(2,4);
                                \draw[very thick](3,1)--(2,3);
                                \draw[very thick](1,6)--(2,2);
                                \draw[very thick](1,5)--(2,1);
                                \draw[very thick](1,4)--(2,1);
                                \draw[very thick](1,3)--(2,1);
                                \draw[very thick](1,2)--(2,1);

                                \foreach \x in {1,2,3,4,5,6,7}
                                \foreach \y in {1,2,3,4,5,6} 
                                \shade [shading=ball, ball color=black] (\x,\y) circle (.07);
                                \node at (1,1) {$\times$};
                                \node at (7,6) {$\times$};
                            \end{tikzpicture}
                            \subcaption*{$\widetilde{H}_1$}
                        \end{minipage}
                        \begin{minipage}{0.3\textwidth}
                            \centering
                            \begin{tikzpicture}[thick, scale=0.6, every node/.style={scale=0.98}]]
                                \draw[very thick](7,5)--(6,6);
                                \draw[very thick](7,4)--(6,5);
                                \draw[very thick](7,3)--(6,4);
                                \draw[very thick](7,2)--(6,3);
                                \draw[very thick](7,1)--(6,6);
                                \draw[very thick](5,5)--(6,2);
                                \draw[very thick](5,6)--(6,1);
                                \draw[very thick](5,4)--(4,5);
                                \draw[very thick](5,3)--(4,4);
                                \draw[very thick](5,2)--(4,3);
                                \draw[very thick](5,1)--(4,6);
                                \draw[very thick](3,5)--(4,2);
                                \draw[very thick](3,6)--(4,1);
                                \draw[very thick](3,4)--(2,5);
                                \draw[very thick](3,3)--(2,4);
                                \draw[very thick](3,2)--(2,3);
                                \draw[very thick](3,1)--(2,6);
                                \draw[very thick](1,5)--(2,2);
                                \draw[very thick](1,6)--(2,1);
                                \draw[very thick](1,4)--(2,1);
                                \draw[very thick](1,3)--(2,1);
                                \draw[very thick](1,2)--(2,1);

                                \foreach \x in {1,2,3,4,5,6,7}
                                \foreach \y in {1,2,3,4,5,6} 
                                \shade [shading=ball, ball color=black] (\x,\y) circle (.07);
                                \node at (1,1) {$\times$};
                                \node at (7,6) {$\times$};
                            \end{tikzpicture}
                            \subcaption*{$\widetilde{H}_2$}
                        \end{minipage}
                        \begin{minipage}{0.3\textwidth}
                            \centering
                            \begin{tikzpicture}[thick, scale=0.6, every node/.style={scale=0.98}]]
                                \draw[very thick](7,5)--(6,6);
                                \draw[very thick](7,4)--(6,6);
                                \draw[very thick](7,3)--(6,5);
                                \draw[very thick](7,2)--(6,4);
                                \draw[very thick](7,1)--(6,2);
                                \draw[very thick](5,6)--(6,3);
                                \draw[very thick](5,4)--(6,1);
                                \draw[very thick](5,5)--(4,6);
                                \draw[very thick](5,3)--(4,5);
                                \draw[very thick](5,2)--(4,4);
                                \draw[very thick](5,1)--(4,2);
                                \draw[very thick](3,6)--(4,3);
                                \draw[very thick](3,4)--(4,1);
                                \draw[very thick](3,5)--(2,6);
                                \draw[very thick](3,3)--(2,5);
                                \draw[very thick](3,2)--(2,4);
                                \draw[very thick](3,1)--(2,2);
                                \draw[very thick](1,6)--(2,3);
                                \draw[very thick](1,5)--(2,1);
                                \draw[very thick](1,4)--(2,1);
                                \draw[very thick](1,3)--(2,1);
                                \draw[very thick](1,2)--(2,1);

                                \foreach \x in {1,2,3,4,5,6,7}
                                \foreach \y in {1,2,3,4,5,6} 
                                \shade [shading=ball, ball color=black] (\x,\y) circle (.07);
                                \node at (1,1) {$\times$};
                                \node at (7,6) {$\times$};
                            \end{tikzpicture}
                            \subcaption*{$\widetilde{H}_3$}
                        \end{minipage}
                        \caption{Three subgraphs of $\hat{\bdu}$-type in the case $(m,n)=(6,7)$}
                        \label{fig:H67}
                    \end{figure}

                \item Next, we show that $\hat{\bdu}\in \ri(\RR_{+}\calA)$. For this purpose, we show that for any $\bdF\in V^{*}$ such that $H_{\bdF}$ is a supporting hyperplane of $\RR_+\calA$ and $\hat{\bdu}\in H_{\bdF}$,  we have $\calQ\subseteq H_{\bdF}$ (and equivalently, $\RR_+\calA\subseteq H_{\bdF}$). Without loss of generality, we can assume that $\bdF$ is represented by the matrix $(f_{i,j})$ with $f_{1,n}=f_{m,1}=0$. Whence, it remains to prove that $\bdF$ is a multiple of the matrix $\bdA$, which was defined earlier in \Cref{rmk:matrix_A}.

                    To prove this, we still use the $\hat{\bdu}$-type subgraphs. Let $\widetilde{H}$ be such a subgraph.
                    Since $\hat{\bdu}=\sum_{\{x_{i,j},x_{i',j+1}\}\in E(\widetilde{H})}(\bde_{i,j}+\bde_{i',j+1})\in H_{\bdF}$, we have $\sum_{\{x_{i,j},x_{i',j+1}\}\in E(\widetilde{H})}(f_{i,j}+f_{i',j+1})=0$. 
                    On the other hand, if $\{x_{i,j},x_{i',j+1}\}\in E(\widetilde{H})$, then $\bde_{i,j}+\bde_{i',j+1}\in \calA$. Since $H_{\bdF}$ is a supporting hyperplane, we have $f_{i,j}+f_{i',j+1}\ge 0$. Therefore, we have indeed $f_{i,j}=-f_{i',j+1}$. 

                    In addition to the subgraph $\widetilde{H}_1$ in the previous part, we will construct subgraphs $\widetilde{H}_2$ and $\widetilde{H}_3$ of $\hat{\bdu}$-type, such that the subgraph $\widehat{H}$ of $H'$ with edges $E(\widetilde{H}_1)\cup E(\widetilde{H}_2)\cup E(\widetilde{H}_3)$ is a connected graph. Now, $f_{i,j}=-f_{i',j+1}\in \ZZ$ whenever $\{x_{i,j},x_{i',j+1}\}\in E(\widehat{H})$. Since $\widehat{H}$ is connected, this implies that $\bdF$ is a multiple of $\bdA$.

                    The graph $\widetilde{H}_2$ is constructed from $\widetilde{H}_1$ as follows. When $j$ is odd, the zone $\calZ_j$ contains two long parallel edges in $\widetilde{H}_1$. We cross them for $\widetilde{H}_2$. When $j$ is even, the zone $\calZ_j$ contains $m-2$ parallel edges of slope $-2$ in $\widetilde{H}_1$. We make $m-3$ of them as parallel edges of slope $-1$, and the remaining one to be $\{x_{1,j},x_{m,j+1}\}$. At this point, the reader is invited to look at the second graph of \Cref{fig:H67}. Meanwhile, note that for each $j$, one has
                    \[
                        \qquad \qquad \qquad
                        \Set{x_{i,j},x_{i',j+1} : \{x_{i,j},x_{i',j+1}\}\in E(\widetilde{H}_1)}=
                        \Set{x_{i,j},x_{i',j+1}:\{x_{i,j},x_{i',j+1}\}\in E(\widetilde{H}_2)}.
                    \]
                    For later reference, we denote this vertex set as $\widetilde{V}_j$. It is crucial to observe that the edges $(E(\widetilde{H}_1)\cup E(\widetilde{H}_2))\cap \calZ_j$ define a connected subgraph over $\widetilde{V}_j$.

                    The graph $\widetilde{H}_3$ is constructed from $\widetilde{H}_1$ as follows. For $1\le j\le n-2$ with $j$ being odd, we change the edges $\{x_{1,j},x_{m-1,j+1}\}$ and $\{x_{m-2,j+1},x_{m,j+2}\}$ in $\widetilde{H}_1$ to the edges $\{x_{1,j},x_{m-2,j+1}\}$ and $\{x_{m-1,j+1},x_{m,j+2}\}$ in $\widetilde{H}_3$. For $1\le j\le n-2$ with $j$ being even, we change the edges $\{x_{1,j},x_{3,j+1}\}$ and $\{x_{2,j+1},x_{m,j+2}\}$ in $\widetilde{H}_1$ to the edges $\{x_{1,j},x_{2,j+1}\}$ and $\{x_{3,j+1},x_{m,j+2}\}$ in $\widetilde{H}_3$. At this point, the reader is invited to look at the third graph of \Cref{fig:H67}.  Note that for each $j$, the vertex set 
                    \[
                        \Set{x_{i,j},x_{i',j+1}:\{x_{i,j},x_{i',j+1}\}\in E(\widetilde{H}_3)} 
                    \]
                    intersects both $\widetilde{V}_j$ and $\widetilde{V}_{j+1}$.  This fact makes the combined subgraph $\widehat{H}$ to be connected.
            \end{enumerate}

            In summary, we have completed the proof for the case when $n$ is odd. The case when $n$ is even is analogous, so the details will be omitted. We only give the construction of the corresponding graphs $\widetilde{H}_1$, $\widetilde{H}_2$, and $\widetilde{H}_3$ for the case  $(m,n)=(6,8)$, 
            see \Cref{fig:H68}. \qedhere
    \end{enumerate}
\end{proof}
\begin{figure}
    \begin{minipage}{0.3\textwidth}
        \centering
        \begin{tikzpicture}[thick, scale=0.56, every node/.style={scale=0.98}]]
            \draw[very thick](7,6)--(8,5);

            \draw[very thick](7,6)--(8,4);
            \draw[very thick](7,5)--(8,3);
            \draw[very thick](7,4)--(8,2);
            \draw[very thick](7,3)--(8,1);
            \draw[very thick](7,2)--(6,6);
            \draw[very thick](7,1)--(6,5);

            \draw[very thick](5,6)--(6,4);
            \draw[very thick](5,5)--(6,3);
            \draw[very thick](5,4)--(6,2);
            \draw[very thick](5,3)--(6,1);
            \draw[very thick](5,2)--(4,6);
            \draw[very thick](5,1)--(4,5);

            \draw[very thick](3,6)--(4,4);
            \draw[very thick](3,5)--(4,3);
            \draw[very thick](3,4)--(4,2);
            \draw[very thick](3,3)--(4,1);
            \draw[very thick](3,2)--(2,6);
            \draw[very thick](3,1)--(2,5);

            \draw[very thick](1,6)--(2,4);
            \draw[very thick](1,5)--(2,3);
            \draw[very thick](1,4)--(2,2);
            \draw[very thick](1,3)--(2,1);
            \draw[very thick](1,2)--(2,1);

            \foreach \x in {1,2,3,4,5,6,7,8}
            \foreach \y in {1,2,3,4,5,6} 
            \shade [shading=ball, ball color=black] (\x,\y) circle (.07);
            \node at (1,1) {$\times$};
            \node at (8,6) {$\times$};
        \end{tikzpicture}
        \subcaption*{$\widetilde{H}_1$}
    \end{minipage}
    \begin{minipage}{0.3\textwidth}
        \centering
        \begin{tikzpicture}[thick, scale=0.56, every node/.style={scale=0.98}]]
            \draw[very thick](7,6)--(8,5);
            \draw[very thick](7,5)--(8,4);
            \draw[very thick](7,4)--(8,3);
            \draw[very thick](7,3)--(8,2);
            \draw[very thick](7,6)--(8,1);
            \draw[very thick](7,2)--(6,5);
            \draw[very thick](7,1)--(6,6);

            \draw[very thick](5,5)--(6,4);
            \draw[very thick](5,4)--(6,3);
            \draw[very thick](5,3)--(6,2);
            \draw[very thick](5,6)--(6,1);
            \draw[very thick](5,2)--(4,5);
            \draw[very thick](5,1)--(4,6);

            \draw[very thick](3,5)--(4,4);
            \draw[very thick](3,4)--(4,3);
            \draw[very thick](3,3)--(4,2);
            \draw[very thick](3,6)--(4,1);
            \draw[very thick](3,2)--(2,5);
            \draw[very thick](3,1)--(2,6);

            \draw[very thick](1,5)--(2,4);
            \draw[very thick](1,4)--(2,3);
            \draw[very thick](1,3)--(2,2);
            \draw[very thick](1,2)--(2,1);
            \draw[very thick](1,6)--(2,1);

            \foreach \x in {1,2,3,4,5,6,7,8}
            \foreach \y in {1,2,3,4,5,6} 
            \shade [shading=ball, ball color=black] (\x,\y) circle (.07);
            \node at (1,1) {$\times$};
            \node at (8,6) {$\times$};
        \end{tikzpicture}
        \subcaption*{$\widetilde{H}_2$}
    \end{minipage}
    \begin{minipage}{0.3\textwidth}
        \centering
        \begin{tikzpicture}[thick, scale=0.56, every node/.style={scale=0.98}]]
            \draw[very thick](7,6)--(8,5);

            \draw[very thick](7,6)--(8,4);
            \draw[very thick](7,5)--(8,3);
            \draw[very thick](7,4)--(8,2);
            \draw[very thick](7,2)--(8,1);
            \draw[very thick](7,3)--(6,6);
            \draw[very thick](7,1)--(6,4);

            \draw[very thick](5,6)--(6,5);
            \draw[very thick](5,5)--(6,3);
            \draw[very thick](5,4)--(6,2);
            \draw[very thick](5,2)--(6,1);
            \draw[very thick](5,3)--(4,6);
            \draw[very thick](5,1)--(4,4);

            \draw[very thick](3,6)--(4,5);
            \draw[very thick](3,5)--(4,3);
            \draw[very thick](3,4)--(4,2);
            \draw[very thick](3,2)--(4,1);
            \draw[very thick](3,3)--(2,6);
            \draw[very thick](3,1)--(2,4);

            \draw[very thick](1,6)--(2,5);
            \draw[very thick](1,5)--(2,3);
            \draw[very thick](1,4)--(2,2);
            \draw[very thick](1,3)--(2,1);
            \draw[very thick](1,2)--(2,1);

            \foreach \x in {1,2,3,4,5,6,7,8}
            \foreach \y in {1,2,3,4,5,6} 
            \shade [shading=ball, ball color=black] (\x,\y) circle (.07);
            \node at (1,1) {$\times$};
            \node at (8,6) {$\times$};
        \end{tikzpicture}
        \subcaption*{$\widetilde{H}_3$}
    \end{minipage}
    \caption{Three subgraphs of $\hat{\bdu}$-type in the case $(m,n)=(6,8)$}
    \label{fig:H68}
\end{figure}
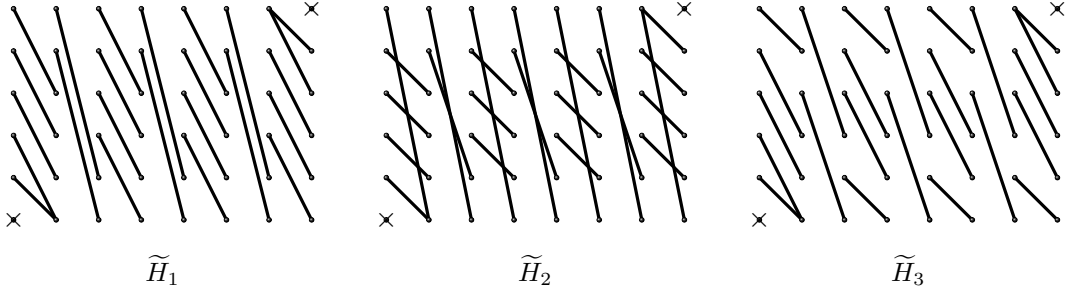

\begin{Theorem}
    \label{regfiber}
    Under the assumptions in  \Cref{set:path}, we suppose additionally that $m\ge 4$. Then, we have
    \begin{align*}
        \reg(\calF(\calJ_{K_m,P_n}))& =\reg(\calF(\ini_\tau(\calJ_{K_m,P_n}))) \\
        &= 
        \begin{cases}
            (mn-3)-(m\cdot \frac{n}{2}), &\text{if $n$ is even}, \\
            (mn-3)-(m\cdot \frac{n+1}{2}-2), & \text{if $n$ is odd}.
        \end{cases}
    \end{align*}
\end{Theorem}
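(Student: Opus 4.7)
The plan is to assemble four ingredients already established in the paper: (i) the Sagbi-basis identification of initial algebras, (ii) the Cohen--Macaulayness of both fiber rings, (iii) the computation of the analytic spread, and (iv) the $a$-invariant calculation in \Cref{lem:a_inv}. With these in hand the theorem reduces to the standard identity $\reg(A)=a(A)+\dim(A)$ for a standard graded Cohen--Macaulay $\KK$-algebra $A$ (cf.\ \cite[Theorem 6.4.1]{MR3362802}).

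First I would show the equality $\reg(\calF(\calJ_{K_m,P_n}))=\reg(\calF(\ini_\tau(\calJ_{K_m,P_n})))$. By \Cref{cor:CM}\ref{limitdepth_a}, the natural generators of $\calJ_{K_m,P_n}$ form a Sagbi basis, which yields the isomorphism $\ini_\tau(\calF(\calJ_{K_m,P_n}))\cong\calF(\ini_\tau(\calJ_{K_m,P_n}))=\KK[H]$ and in particular guarantees that $\calF(\calJ_{K_m,P_n})$ and $\calF(\ini_\tau(\calJ_{K_m,P_n}))$ share the same Hilbert series. Since both algebras are Cohen--Macaulay by \Cref{cor:CM}, their common Hilbert series determines both their Krull dimensions and their $a$-invariants, hence their regularities.

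Next I would pin down these two numerical invariants for $\calF(\ini_\tau(\calJ_{K_m,P_n}))=\KK[H]$. The dimension is the analytic spread, and \Cref{cor:CM}\ref{cor:CM_analytic_spread} gives
\[
\dim\calF(\ini_\tau(\calJ_{K_m,P_n}))=\ell(\ini_\tau(\calJ_{K_m,P_n}))=mn-3
\]
under the hypothesis $m\ge 4\ge 3$. The $a$-invariant is exactly what \Cref{lem:a_inv} computes:
\[
a(\KK[H])=
\begin{cases}
-m\cdot\tfrac{n}{2}, & \text{$n$ even},\\
-(m\cdot\tfrac{n+1}{2}-2), & \text{$n$ odd}.
\end{cases}
\]

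Finally I would combine these via $\reg(A)=a(A)+\dim(A)$ for any Cohen--Macaulay standard graded $\KK$-algebra $A$, yielding the claimed formula in both parities of $n$. Since all technical work (the Sagbi-basis theorem \Cref{lem:sagbi_path}, the Cohen--Macaulay and analytic-spread statements in \Cref{cor:CM}, and the delicate $a$-invariant computation in \Cref{lem:a_inv}) is already in place, the proof itself is a short assembly; the genuine obstacle, namely producing the extremal vector $\hat{\bdu}$ and showing it lies in the relative interior of the edge cone via the $\hat{\bdu}$-type subgraphs $\widetilde{H}_1,\widetilde{H}_2,\widetilde{H}_3$, has already been surmounted in \Cref{lem:a_inv}.
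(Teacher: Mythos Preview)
Your proposal is correct and mirrors the paper's proof essentially step for step: identify $\ini_\tau(\calF(\calJ_{K_m,P_n}))$ with $\calF(\ini_\tau(\calJ_{K_m,P_n}))=\KK[H]$ via the Sagbi basis, use Cohen--Macaulayness plus equal Hilbert series to match regularities, and then compute $\reg(\KK[H])=\dim(\KK[H])+a(\KK[H])$ from \Cref{cor:CM}\ref{cor:CM_analytic_spread} and \Cref{lem:a_inv}. The only cosmetic difference is that the paper cites \cite[Corollary 10.1.21]{MR3362802} directly for $\dim(\KK[H])=mn-3$ rather than routing through the analytic spread statement.
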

\begin{proof} 
    Since $\ini_\tau(\calF(\calJ_{K_m,P_n}))=\calF(\ini_\tau(\calJ_{K_m,P_n}))$, the two special fiber rings $\calF(\calJ_{K_m,P_n})$ and $\calF(\ini_\tau(\calJ_{K_m,P_n}))$ have the same Hilbert series by \cite[Proposition 2.4]{MR1390693}. At the same time, these two special fiber rings are Cohen--Macaulay domains by 
    \Cref{cor:CM}.
    Thus, $\reg(\calF(\calJ_{K_m,P_n}))=\reg(\calF(\ini_\tau(\calJ_{K_m,P_n})))$ by 
    \cite[Corollary 2.18]{MR3838370}.

    Note that the algebras $\calF(\ini_\tau(\calJ_{K_m,P_n}))$ and $\KK[H]$ are isomorphic. Since $m\ge 4$, we have $\dim(\KK[H])=mn-3$ by \cite[Corollary 10.1.21]{MR3362802}. The desired result then follows from  \Cref{lem:a_inv} and the equality
    \[
        \reg(\KK[H])=\dim(\KK[H])+a(\KK[H]) 
    \]
    in the Cohen--Macaulay case, see \cite[Theorem 6.4.1]{MR3362802}.
\end{proof}

\section{Applications}

In this section, we will explore the generalized binomial edge ideal of a simple graph and its blowup algebras. We will compare them with the corresponding parts of an induced subgraph. Our analysis will be based on the regularity results from previous sections, which serve as natural lower bounds for the comparison.

We will start with the comparison result in a more general setting. Recall that if $G'$ is an induced subgraph of a graph $G$, then for the graded Betti numbers of the powers of the binomial edge ideals, one has $\beta_{ij}(S/J^{s}_{G'}) \le \beta_{ij}(S/J^{s}_{G})$ for all $i,j\ge 0$ and $s\ge 1$; see \cite[Proposition 3.3]{JKS}. This result for the classical binomial edge ideal can be easily generalized to the binomial edge ideal of a pair, as follows.

\begin{Setting}
    \label{set:pair}
    For $i=1,2$, let $G_i$ be a simple graph on the vertex set $[n_i]$, and let $H_i$ be its induced subgraph. Correspondingly, we have $\calJ_{G_1,G_2}$, the binomial ideal of the pair $(G_1,G_2)$ in the polynomial ring $\KK[\bdX]$, as well as $\calJ_{H_1,H_2}$, the binomial ideal of the pair $(H_1,H_2)$ in the polynomial ring $\KK[\bdY]$. If we consider $\bdX$ to be  an $n_1\times n_2$ matrix of variables, then we can naturally regard $\bdY$ as a $|V(H_1)|\times |V(H_2)|$ submatrix.
\end{Setting}

\begin{Theorem}
    \label{power}
    Under the assumptions in \Cref{set:pair}, we have 
    \[
        \beta_{ij}(\KK[\bdY]/\calJ^{s}_{H_1,H_2})\le\beta_{ij}(\KK[\bdX]/\calJ^{s}_{G_1,G_2}) 
    \]
    for all $i,j$ and $s\ge 1$. In particular, we have
    \[
        \reg(\calJ^{s}_{H_1,H_2})\le \reg(\calJ^{s}_{G_1,G_2})\qquad
        \text{and} \qquad
        \pd(\calJ^{s}_{H_1,H_2}) \le \pd(\calJ^{s}_{G_1,G_2})
    \]
    for all $s\ge 1$.
\end{Theorem}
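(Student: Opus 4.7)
The plan is to compare the minimal graded free resolutions of $\KK[\bdY]/\calJ^{s}_{H_1,H_2}$ and $\KK[\bdX]/\calJ^{s}_{G_1,G_2}$ through the natural specialization
\[
    \phi\colon R\coloneqq\KK[\bdX]\twoheadrightarrow \bar R\coloneqq\KK[\bdY],\qquad x_{i,j}\mapsto \begin{cases}x_{i,j}, & (i,j)\in V(H_1)\times V(H_2),\\ 0, & \text{otherwise.}\end{cases}
\]
This is a $\KK$-algebra retraction whose section is the inclusion $\bar R\hookrightarrow R$, so $\bar R$ is a combinatorial pure subring of $R$. The argument mirrors that of \cite[Proposition~3.3]{JKS} in the classical case, and the generalization to the pair $(G_1,G_2)$ is mechanical once the right setup is in place.

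First I would verify that $\phi(\calJ^{s}_{G_1,G_2})=\calJ^{s}_{H_1,H_2}$. Since $\phi$ is a ring homomorphism, it suffices to check $s=1$: for a generator $p_{(e,f)}=x_{i,k}x_{j,l}-x_{i,l}x_{j,k}$ with $e=\{i,j\}\in E(G_1)$ and $f=\{k,l\}\in E(G_2)$, its two monomials share the same four indices, so $\phi(p_{(e,f)})$ equals $p_{(e,f)}$ itself when $i,j\in V(H_1)$ and $k,l\in V(H_2)$---equivalently, since $H_1, H_2$ are induced, when $e\in E(H_1)$ and $f\in E(H_2)$---and vanishes in every other case. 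This identifies the generators of $\calJ_{G_1,G_2}$ mapping non-trivially under $\phi$ with the natural generators of $\calJ_{H_1,H_2}$, and passing to powers yields the required statement.

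Next, I would invoke the general comparison principle for combinatorial pure subrings: if $\pi\colon A\twoheadrightarrow B$ is a graded retraction of standard graded polynomial $\KK$-algebras and $L\subset A$ is a graded ideal with $\pi(L)=\bar L$, then $\beta^{B}_{i,j}(B/\bar L)\le \beta^{A}_{i,j}(A/L)$ for every bidegree $(i,j)$. The standard proof fixes a minimal graded free resolution $F_\bullet\twoheadrightarrow A/L$ over $A$ and forms the base change $\bar F_\bullet\coloneqq F_\bullet\otimes_A B$: it is a complex of graded free $B$-modules whose bidegree-wise ranks coincide with those of $F_\bullet$, whose differentials still lie in the irrelevant ideal of $B$ (by minimality of $F_\bullet$), and whose zeroth homology is $B/\bar L$ (by the previous step). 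A minimal-model argument that exploits the splitting $A=B\otimes_{\KK}\KK[\ker \pi]$ then shows that the minimal graded free resolution of $B/\bar L$ over $B$ is dominated bidegree-wise by $\bar F_\bullet$. Specializing to $A=R$, $B=\bar R$, $L=\calJ^{s}_{G_1,G_2}$, $\bar L=\calJ^{s}_{H_1,H_2}$ yields the Betti number inequality, and the stated bounds on regularity and projective dimension follow immediately from $\reg(R/I)=\max\{j-i:\beta_{i,j}(R/I)\ne 0\}$ and $\pd(R/I)=\max\{i:\beta_{i,j}(R/I)\ne 0\text{ for some }j\}$.

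The main obstacle is the minimal-model step: the complex $\bar F_\bullet$ is a minimal free complex over $\bar R$ but need not be exact, because $\bdZ\coloneqq\bdX\setminus\bdY$ need not form a regular sequence modulo $\calJ^{s}_{G_1,G_2}$ (one already sees this in the toy example $G_1=G_2=K_2$ and $H_2=K_1$, where one checks that $x_{v,1}$ for the removed vertex $v$ fails to be regular on $R/\calJ_{G_1,G_2}$ after killing some partner variable). The careful argument must therefore show that the extra Koszul homology of $\bdZ$ on $R/\calJ^{s}_{G_1,G_2}$ does not reduce the bidegree-wise ranks of the minimal resolution of $\bar R/\bar L$ below the Betti numbers of $R/L$; this is precisely where the pure-subring (retract) structure, rather than merely the surjection $\phi$, is essential.
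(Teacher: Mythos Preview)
Your overall strategy---use the projection $\phi$ and the retract structure to deduce the Betti comparison---is the paper's, and your verification that $\phi(\calJ^s_{G_1,G_2})=\calJ^s_{H_1,H_2}$ is fine. The genuine gap is the ``general comparison principle'' you invoke: the hypothesis $\pi(L)=\bar L$ alone does \emph{not} force $\beta^B_{ij}(B/\bar L)\le\beta^A_{ij}(A/L)$. Take $A=\KK[x,y,z]$, $B=\KK[x,y]$, $\pi(z)=0$, and $L=(x^2-yz,\,xy)$; then $\bar L=(x^2,xy)$, and since $L$ is a complete intersection of two quadrics while $\bar L$ has a linear syzygy, one gets $\beta^B_{2,3}(B/\bar L)=1>0=\beta^A_{2,3}(A/L)$. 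So no ``minimal-model argument'' at that level of generality can succeed, which is exactly the obstruction you sense in your last paragraph but never remove.

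What is missing is the piece you name but do not actually use: the retract at the \emph{quotient} level. In addition to $\phi(\calJ^s_{G_1,G_2})=\calJ^s_{H_1,H_2}$ one needs $\calJ^s_{H_1,H_2}\subseteq\calJ^s_{G_1,G_2}$ under the inclusion $\KK[\bdY]\hookrightarrow\KK[\bdX]$---trivial here, since every natural generator of $\calJ_{H_1,H_2}$ is already a natural generator of $\calJ_{G_1,G_2}$, but precisely what fails in the counterexample above (there $x^2\in\bar L\setminus L$). These two facts together give $\calJ^s_{G_1,G_2}\cap\KK[\bdY]=\calJ^s_{H_1,H_2}$, hence an embedding $\KK[\bdY]/\calJ^s_{H_1,H_2}\hookrightarrow\KK[\bdX]/\calJ^s_{G_1,G_2}$ split by the map $\bar\phi$ induced by $\phi$. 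That is the algebra retract, and the paper then simply cites \cite[Corollary~2.5]{MR1789447} to conclude; no analysis of $\bar F_\bullet$ or of the Koszul homology of the killed variables is required.
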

\begin{proof}
    First, we show that $\calJ^{s}_{H_1,H_2}=\calJ^{s}_{G_1,G_2}\cap \KK[\bdY]$ for all $s\ge 1$.
    Since the natural generators of $\calJ^{s}_{H_1,H_2}$ are automatically contained in $\calJ^{s}_{G_1,G_2}$, one has $\calJ^{s}_{H_1,H_2}\subseteq\calJ^{s}_{G_1,G_2}\cap \KK[\bdY]$. For the converse  inclusion, let $g\in \calJ^{s}_{G_1,G_2}\cap \KK[\bdY]$. We can write $g$ as a finite sum
    \begin{equation*}
        g=\sum_{\substack{(e_i,f_i)\in E(G_1)\times E(G_2),\\ 1\le i\le s}}
        h_{(e_1,f_1),\ldots,(e_s,f_s)}p_{(e_1,f_1)}\cdots p_{(e_s,f_s)},
    \end{equation*}
    where $h_{(e_1,f_1),\ldots,(e_s,f_s)}\in \KK[\bdX]$.  Now, consider the $\KK$-algebra homomorphism $\pi : \KK[\bdX]\to \KK[\bdY]$ by setting 
    \[
        \pi(x_{i,j})=
        \begin{cases}
            x_{i,j},& \text{if $x_{i,j}$ is  a variable  in $\bdY$,}\\
            0,& \text{otherwise}.
        \end{cases}
    \]
    Thus,
    \[
        \pi(p_{(e,f)})=
        \begin{cases}
            p_{(e,f)}, & \text{if $(e,f)\in E(H_1)\times E(H_2)$,}\\
            0, & \text{otherwise.}
        \end{cases}
    \]
    Since $g\in \KK[\bdY]$, we have $\pi(g)=g$. Therefore, we get
    \begin{align*}
        g&=\sum_{\substack{(e_i,f_i)\in E(G_1)\times E(G_2),\\ 1\le i\le s}} 
        \pi(h_{(e_1,f_1),\ldots,(e_s,f_s)})\pi(p_{(e_1,f_1)})\cdots \pi(p_{(e_s,f_s)})\\
        &=\sum_{\substack{(e_i,f_i)\in E(H_1)\times E(H_2),\\ 1\le i\le s}}
        \pi(h_{(e_1,f_1),\ldots,(e_s,f_s)}) p_{(e_1,f_1)}\cdots p_{(e_s,f_s)}.
    \end{align*}
    Thus, $g\in \calJ^{s}_{H_1,H_2}$. This completes our proof for $\calJ^{s}_{H_1,H_2}=\calJ^{s}_{G_1,G_2}\cap \KK[\bdY]$.

    Consequently, $\KK[\bdY]/\calJ^{s}_{H_1,H_2}$ is a $\KK$-subalgebra of $\KK[\bdX]/\calJ^{s}_{G_1,G_2}$. 
    Let $\bar{\pi}:\KK[\bdX]/\calJ^{s}_{G_1,G_2} \to \KK[\bdY]/\calJ^{s}_{H_1,H_2}$ be the homomorphism induced by $\pi$. Since $\pi(\calJ^{s}_{G_1,G_2})\subseteq \calJ^{s}_{H_1,H_2}$, the map $\bar{\pi}$ is well-defined. Notice that the restriction of $\bar{\pi}$ to $\KK[\bdY]/\calJ^{s}_{H_1,H_2}$ is the identity map. Thus, $\bar{\pi}$ is surjective, and 
    $\KK[\bdY]/\calJ^{s}_{H_1,H_2}$ is an algebra retract of $\KK[\bdX]/\calJ^{s}_{G_1,G_2}$. Now, the expected inequalities follow from \cite[Corollary 2.5]{MR1789447}. 
\end{proof}

\begin{Corollary}
    Let $G$ be a simple graph
    and $G'$ be its induced subgraph. Then we have $\reg(\mathcal{J}^{s}_{K_m,G'}) \le \reg(\mathcal{J}^{s}_{K_m,G})$ for all $s\ge 1$.
\end{Corollary}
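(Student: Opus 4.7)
The plan is to deduce this corollary as an immediate specialization of \Cref{power}, which has already been established. Concretely, I would take $G_1 = H_1 = K_m$ (a complete graph is trivially an induced subgraph of itself) and set $G_2 = G$, $H_2 = G'$. Since $G'$ is an induced subgraph of $G$ by hypothesis, the pair $(H_1, H_2) = (K_m, G')$ satisfies the assumptions of \Cref{set:pair} relative to the pair $(G_1, G_2) = (K_m, G)$.

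Under this identification, the binomial edge ideal of the pair $(K_m, G)$ is exactly the generalized binomial edge ideal $\calJ_{K_m, G}$, and likewise $\calJ_{K_m, G'}$ is the binomial edge ideal of the pair $(K_m, G')$. Applying the regularity inequality of \Cref{power} to these pairs yields
\[
    \reg(\calJ^{s}_{K_m, G'}) \;=\; \reg(\calJ^{s}_{H_1, H_2}) \;\le\; \reg(\calJ^{s}_{G_1, G_2}) \;=\; \reg(\calJ^{s}_{K_m, G})
\]
for all $s \ge 1$, which is the desired inequality.

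There is essentially no obstacle here; the hard work was already done in \Cref{power}, whose proof constructs the algebra retract $\bar{\pi}$ and invokes \cite[Corollary 2.5]{MR1789447}. The only minor point worth remarking in the write-up is that a complete graph $K_m$ on $[m]$ is its own induced subgraph, so the conditions of \Cref{set:pair} are satisfied in the most trivial manner on the first coordinate, while the nontrivial containment $G' \subseteq G$ is what drives the inequality. Hence the corollary is a one-line consequence of \Cref{power}.
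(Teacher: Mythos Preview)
Your proposal is correct and matches the paper's approach: the corollary is stated without proof immediately after \Cref{power}, making clear it is the direct specialization $G_1=H_1=K_m$, $G_2=G$, $H_2=G'$ that you describe.
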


Kumar proved in \cite[Theorems 3.5 and 4.6]{MR4405525} that if $G'$ is an induced subgraph of a graph $G$, then  $\reg(\calR(J_{G'}))\le \reg(\calR(J_{G}))$ and $\reg(\calF(J_{G'}))\le \reg(\calF(J_{G}))$ for the regularities of the blowup algebras of classical binomial edge ideals. We can generalize this to the binomial edge ideals of pairs. 

\begin{Theorem}
    \label{induced}
    Under the assumptions in \Cref{set:pair}, we have 
    \begin{equation*}
        \reg(\calR(\calJ_{H_1,H_2}))\le \reg(\calR(\calJ_{G_1,G_2}))
    \end{equation*}
    and
    \begin{equation*}
        \reg(\calF(\calJ_{H_1,H_2}))\le \reg(\calF(\calJ_{G_1,G_2})).
    \end{equation*}
\end{Theorem}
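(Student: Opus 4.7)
My plan is to lift the algebra retract used in the proof of Theorem \ref{power} to the level of the blowup algebras, and then invoke the Betti-number comparison for algebra retracts from \cite[Corollary 2.5]{MR1789447}. Concretely, let $\pi : \KK[\bdX] \to \KK[\bdY]$ be the $\KK$-algebra homomorphism that sends every variable of $\bdX$ not belonging to $\bdY$ to zero and fixes the variables of $\bdY$; this is the retraction already used in Theorem \ref{power}, and it satisfies $\pi(p_{(e,f)}) = p_{(e,f)}$ when $(e,f) \in E(H_1)\times E(H_2)$ and $\pi(p_{(e,f)}) = 0$ otherwise.

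I would first extend $\pi$ to a surjective graded homomorphism $\widetilde{\pi} : \KK[\bdX][T] \to \KK[\bdY][T]$ by setting $\widetilde{\pi}(T) = T$. Using the identity $\calJ^s_{H_1,H_2} = \calJ^s_{G_1,G_2}\cap \KK[\bdY]$ established in Theorem \ref{power} (combined with the behaviour of $\pi$ on the natural generators $p_{(e,f)}$), I would check that $\widetilde{\pi}(\calJ^s_{G_1,G_2} T^s) \subseteq \calJ^s_{H_1,H_2} T^s$ for every $s \ge 1$. Hence $\widetilde{\pi}$ restricts to a surjective graded homomorphism $\bar{\pi} : \calR(\calJ_{G_1,G_2}) \twoheadrightarrow \calR(\calJ_{H_1,H_2})$. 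On the other hand, since the natural generators of $\calJ_{H_1,H_2}$ are a subset of the natural generators of $\calJ_{G_1,G_2}$, the inclusion $\KK[\bdY] \hookrightarrow \KK[\bdX]$ induces a graded inclusion $\iota : \calR(\calJ_{H_1,H_2}) \hookrightarrow \calR(\calJ_{G_1,G_2})$, and the composition $\bar{\pi}\circ \iota$ is the identity. This exhibits $\calR(\calJ_{H_1,H_2})$ as an algebra retract of $\calR(\calJ_{G_1,G_2})$, and then \cite[Corollary 2.5]{MR1789447} yields the first inequality.

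For the special fiber ring, I would note that $\pi$ maps the irrelevant maximal ideal of $\KK[\bdX]$ into that of $\KK[\bdY]$, so $\bar{\pi}$ descends modulo these maximal ideals to a surjection $\calF(\calJ_{G_1,G_2}) \twoheadrightarrow \calF(\calJ_{H_1,H_2})$. Since $\iota$ similarly descends to an inclusion in the opposite direction, the algebra retract structure transfers to the fiber rings, and a second application of \cite[Corollary 2.5]{MR1789447} gives the second inequality.

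The only real point needing care, and the likely main obstacle, is checking that the maps $\widetilde{\pi}$, $\bar{\pi}$, $\iota$ are all graded with respect to a grading that makes the Betti-number comparison of \cite[Corollary 2.5]{MR1789447} applicable to the regularity of the blowup algebras; once the inclusions $\widetilde{\pi}(\calJ^s_{G_1,G_2} T^s)\subseteq \calJ^s_{H_1,H_2} T^s$ and $\pi(\mm_{\bdX})\subseteq \mm_{\bdY}$ are in place, the descent and the retract identity $\bar{\pi}\circ\iota = \id$ are routine.
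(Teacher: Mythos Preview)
Your proposal is correct and follows essentially the same approach as the paper: both arguments lift the retraction $\pi$ from Theorem~\ref{power} to the Rees algebra, verify that the inclusion $\iota$ and the induced surjection compose to the identity, and then invoke \cite[Corollary 2.5]{MR1789447}. The only cosmetic difference is in the treatment of the special fiber ring: the paper constructs the inclusion $\iota^\triangle$ directly via the explicit identification $\calF(\calJ_{G_1,G_2})\cong\KK[p_{(e,f)}:(e,f)\in E(G_1)\times E(G_2)]$, whereas you obtain it by descending $\iota$ modulo the irrelevant maximal ideals; both routes yield the same retract structure.
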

\begin{proof}
    Let $\pi:\KK[\bdX]\to \KK[\bdY]$ be the map defined in the proof of \Cref{power}. 
    We have $\pi(\calJ_{G_1,G_2}^s)=\calJ_{H_1,H_2}^s=\calJ_{G_1,G_2}^s\cap \KK[\bdY]$ for all $s\ge 0$. This fact induces the graded embedding map 
    \[
        \iota: \calR(\calJ_{H_1,H_2}){\into} \calR(\calJ_{G_1,G_2})
    \]
    as well as a graded epimorphism 
    \[
        \pi^*: \calR(\calJ_{G_1,G_2})\onto \calR(\calJ_{H_1,H_2}).
    \]
    Notice that ${\pi}^* \circ \iota$ is the identity map on $\calR(\calJ_{H_1,H_2})$. It follows that $\calR(\calJ_{H_1,H_2})$ is an algebra retract of $\calR(\calJ_{G_1,G_2})$. 
    
    Meanwhile, notice that 
    \begin{align*}
        \calF(\calJ_{G_1,G_2})&\cong \calR(\calJ_{G_1,G_2})\otimes_{\KK[\bdX]} \KK \cong \KK[p_{(e,f)}:e\in E(G_1),f\in E(G_2)]\subseteq \KK[\bdX],\\
    \intertext{and}
        \calF(\calJ_{H_1,H_2})&\cong \calR(\calJ_{H_1,H_2})\otimes_{\KK[\bdX]} \KK 
        \cong \calR(\calJ_{H_1,H_2})\otimes_{\KK[\bdY]} \KK \\
        &\cong \KK[p_{(e,f)}:e\in E(H_1),f\in E(H_2)]\subseteq \KK[\bdY] \subseteq \KK[\bdX].
    \end{align*}
    Therefore, we have a graded embedding map $\iota^\triangle: \calF(\calJ_{H_1,H_2})\into \calF(\calJ_{G_1,G_2})$. On the other hand, by tensoring $\pi^*$ with $\KK$, we have an induced graded epimorphism $\pi^\triangle:  \calF(\calJ_{G_1,G_2}) \onto \calF(\calJ_{H_1,H_2})$.
    Notice that ${\pi}^\triangle \circ \iota^\triangle$ is the identity map on $\calF(\calJ_{H_1,H_2})$. It follows that $\calF(\calJ_{H_1,H_2})$ is an algebra retract of $\calF(\calJ_{G_1,G_2})$. 
    
    To complete the proof, it remains to apply \cite[Corollary 2.5]{MR1789447}.
\end{proof}

\begin{Corollary}
    Let $G$ be a graph and $G'$ be its induced subgraph. Then,  
    \begin{equation*}
        \reg(\calR(\calJ_{K_m,G'}))\le \reg(\calR(\calJ_{K_m,G}))
    \end{equation*}
    and
    \begin{equation*}
        \reg(\calF(\calJ_{K_m,G'}))\le \reg(\calF(\calJ_{K_m,G})).
    \end{equation*}
\end{Corollary}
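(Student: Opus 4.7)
The plan is to apply \Cref{induced} directly, specializing the general pair-of-graphs setup to the generalized binomial edge ideal situation. Specifically, in the notation of \Cref{set:pair}, I would take $G_1 = H_1 = K_m$ (on the vertex set $[m]$) and $G_2 = G$, $H_2 = G'$. The only thing to check is that the hypotheses of \Cref{set:pair} hold: $H_1 = K_m$ is trivially an induced subgraph of $G_1 = K_m$, and by assumption $H_2 = G'$ is an induced subgraph of $G_2 = G$.

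Under this specialization, the binomial edge ideal of the pair $(G_1, G_2)$ becomes exactly the generalized binomial edge ideal $\calJ_{K_m, G}$, and likewise the binomial edge ideal of the pair $(H_1, H_2)$ becomes $\calJ_{K_m, G'}$. The variable matrix $\bdY$ of \Cref{set:pair} is the submatrix of $\bdX$ indexed by $[m] \times V(G')$. Both inequalities of the corollary then follow immediately from the two inequalities of \Cref{induced}.

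There is essentially no obstacle here: the corollary is just the restriction of \Cref{induced} to the case where the first graph is the complete graph $K_m$. If a referee wanted more detail, I would note explicitly that the retract maps constructed in the proof of \Cref{induced} (namely $\iota, \pi^{*}$ for the Rees algebras and $\iota^{\triangle}, \pi^{\triangle}$ for the special fiber rings) restrict cleanly to this case, so no new argument is needed.
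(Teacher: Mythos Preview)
Your proposal is correct and matches the paper's approach: the corollary is stated in the paper without proof, as it is an immediate specialization of \Cref{induced} with $G_1=H_1=K_m$, $G_2=G$, and $H_2=G'$.
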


\begin{Corollary}
    Let $G$ be a graph which contains an induced path with $n$ vertices. Then, we have
    \[
        \reg\left(\frac{S}{\calJ^t_{K_m,G}}\right)\ge 2(t-1)+(n-1)
    \]
    for each $t\ge 1$. Furthermore,
    \begin{equation*}
        \reg(\calR(\calJ_{K_m,G}))\ge (m-1)\floor{\frac{n}{2}}+\floor{\frac{n-1}{2}},
    \end{equation*}
    and
    \begin{equation*}
        \reg(\calR(\calJ_{K_m,G}))\ge 
        \begin{cases}
            (mn-3)-(m\cdot \frac{n}{2}), &\text{if $n$ is even}, \\
            (mn-3)-(m\cdot \frac{n+1}{2}-2), & \text{if $n$ is odd}.
        \end{cases}
    \end{equation*}
\end{Corollary}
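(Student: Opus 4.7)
The plan is to read off each lower bound directly from the exact formulas already established for the path case, using the induced-subgraph retract inequalities of \Cref{power} and \Cref{induced}. Let $G'$ denote the induced subgraph of $G$ on the vertex set of the prescribed induced $P_n$, so that $G' \cong P_n$. Placing this into the framework of \Cref{set:pair} with $(G_1,G_2) = (K_m, G)$ and $(H_1,H_2) = (K_m, G')$, the trivial identity $K_m = K_m$ together with the hypothesis that $G'$ is induced in $G$ ensures that the assumptions are satisfied.

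First, \Cref{power} (and the corollary immediately following it) gives $\reg(\calJ^t_{K_m,G'}) \le \reg(\calJ^t_{K_m,G})$ for every $t \ge 1$; since $\reg(S/I) = \reg(I) - 1$, combining this with the exact value $\reg(S/\calJ^t_{K_m,P_n}) = 2(t-1) + (n-1)$ from \Cref{thm:power_reg_path} yields the first claimed inequality. Next, \Cref{induced} furnishes the two retract inequalities
\[
\reg(\calR(\calJ_{K_m,P_n})) \le \reg(\calR(\calJ_{K_m,G})) \quad \text{and} \quad \reg(\calF(\calJ_{K_m,P_n})) \le \reg(\calF(\calJ_{K_m,G})).
\]
Substituting the explicit value from \Cref{regRees} for the Rees-algebra part and from \Cref{regfiber} for the special-fiber part (reading the third displayed bound in the statement as being about $\calF$ rather than $\calR$) produces the remaining two inequalities.

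For the edge cases $m \in \{2,3\}$ the graph $H$ associated to $P_n$ is acyclic, so $\calF(\calJ_{K_m,P_n})$ is a polynomial ring and the fiber bound degenerates to the trivial assertion $0 \le \reg(\calF(\calJ_{K_m,G}))$; hence no case analysis beyond $m \ge 4$ is needed. There is no genuine obstacle in the argument: all the heavy lifting—the Sagbi-basis commutation of \Cref{lem:sagbi_path}, the co-chordal cover estimate of \Cref{lem:cochord}, the matching-number computation of \Cref{lem:match_number}, and the $a$-invariant analysis of \Cref{lem:a_inv}—was already carried out in the earlier sections. The remaining task is only to recognise that the quantities of interest are monotone under taking induced subgraphs of the second component of a pair, which is precisely the content of \Cref{power} and \Cref{induced}.
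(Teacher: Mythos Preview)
Your approach is essentially identical to the paper's: both derive the three bounds by combining the exact path-case formulas (\Cref{thm:power_reg_path}, \Cref{regRees}, \Cref{regfiber}) with the induced-subgraph retract inequalities (\Cref{power}, \Cref{induced}). You correctly read the third displayed bound as being about $\calF$ rather than $\calR$, matching the paper's citation of \Cref{regfiber}.

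One small caveat on your $m\in\{2,3\}$ remark: the retract inequality indeed only yields $0\le \reg(\calF(\calJ_{K_m,G}))$ in that range, but the right-hand side of the third displayed formula is strictly positive for $m\in\{2,3\}$ and $n$ large, so this does \emph{not} establish the inequality as literally written for those $m$. This is not a defect of your argument relative to the paper's---the paper's own proof cites only \Cref{regfiber}, which carries the hypothesis $m\ge 4$, so the third bound should be read under that restriction in both proofs.
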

\begin{proof}
    These results follow from \Cref{power,thm:power_reg_path,induced,regRees,regfiber}.
\end{proof}

\begin{acknowledgment*}
    The authors are grateful to the software systems \texttt{Macaulay2} \cite{M2} and \texttt{Normaliz} \cite{Normaliz}, for serving as excellent sources of inspiration. This work is supported by the Natural Science Foundation of Jiangsu Province (No.~BK20221353). In addition,  the first author is partially supported by the Anhui Initiative in Quantum Information Technologies (No.~AHY150200) and the ``Innovation Program for Quantum Science and Technology'' (2021ZD0302902). And the second author is supported by the Foundation of the Priority Academic Program Development of Jiangsu Higher Education Institutions.  
\end{acknowledgment*}

\bibliography{BEI} 

@article {MR3859970,
    AUTHOR = {Baskoroputro, Herolistra and Ene, Viviana and Ion, Cristian},
     TITLE = {Koszul binomial edge ideals of pairs of graphs},
   JOURNAL = {J. Algebra},
  FJOURNAL = {Journal of Algebra},
    VOLUME = {515},
      YEAR = {2018},
     PAGES = {344--359},
      ISSN = {0021-8693},
   MRCLASS = {05E40 (13D02 13P10 16S37)},
  MRNUMBER = {3859970},
       DOI = {10.1016/j.jalgebra.2018.08.029},
       URL = {https://doi.org/10.1016/j.jalgebra.2018.08.029},
}

@article {MR0530808,
    AUTHOR = {Brodmann, M.},
     TITLE = {The asymptotic nature of the analytic spread},
   JOURNAL = {Math. Proc. Cambridge Philos. Soc.},
  FJOURNAL = {Mathematical Proceedings of the Cambridge Philosophical
              Society},
    VOLUME = {86},
      YEAR = {1979},
     PAGES = {35--39},
      ISSN = {0305-0041,1469-8064},
   MRCLASS = {13C15},
  MRNUMBER = {530808},
MRREVIEWER = {K.\ Wolffhardt},
       DOI = {10.1017/S030500410000061X},
       URL = {https://doi.org/10.1017/S030500410000061X},
}

@Book{zbMATH07539232,
    AUTHOR = {Bruns, Winfried and Conca, Aldo and Raicu, Claudiu and Varbaro, Matteo},
     TITLE = {Determinants, {G}r\"{o}bner bases and cohomology},
    SERIES = {Springer Monographs in Mathematics},
 PUBLISHER = {Springer, Cham},
      YEAR = {2022},
     PAGES = {xiii+507},
      ISBN = {978-3-031-05479-2; 978-3-031-05480-8},
   MRCLASS = {13P10 (13C40)},
  MRNUMBER = {4627943},
       DOI = {10.1007/978-3-031-05480-8},
       URL = {https://doi.org/10.1007/978-3-031-05480-8},
}

@article {MR4233116,
    AUTHOR = {Chaudhry, Faryal and Irfan, Rida},
     TITLE = {On the generalized binomial edge ideals of generalized block graphs},
   JOURNAL = {Math. Rep. (Bucur.)},
  FJOURNAL = {Mathematical Reports (Bucure\c{s}ti)},
    VOLUME = {22(72)},
      YEAR = {2020},
     PAGES = {381--394},
      ISSN = {1582-3067},
   MRCLASS = {13C15 (05E40 16E05)},
  MRNUMBER = {4233116},
}

@article {MR1477608,
    AUTHOR = {Conca, Aldo},
     TITLE = {Gr\"{o}bner bases of powers of ideals of maximal minors},
   JOURNAL = {J. Pure Appl. Algebra},
  FJOURNAL = {Journal of Pure and Applied Algebra},
    VOLUME = {121},
      YEAR = {1997},
     PAGES = {223--231},
      ISSN = {0022-4049},
   MRCLASS = {13P10 (13C40)},
  MRNUMBER = {1477608},
MRREVIEWER = {Daniel Mall},
       DOI = {10.1016/S0022-4049(96)00061-8},
       URL = {https://doi.org/10.1016/S0022-4049(96)00061-8},
}

@article {MR1711319,
    AUTHOR = {Cutkosky, S. Dale and Herzog, J\"{u}rgen and Trung, Ng\^{o} Vi\^{e}t},
     TITLE = {Asymptotic behaviour of the {C}astelnuovo-{M}umford regularity},
   JOURNAL = {Compositio Math.},
  FJOURNAL = {Compositio Mathematica},
    VOLUME = {118},
      YEAR = {1999},
     PAGES = {243--261},
      ISSN = {0010-437X},
   MRCLASS = {13D45 (13A30 13C99 14F17)},
  MRNUMBER = {1711319},
MRREVIEWER = {Vincenzo Di Gennaro},
       DOI = {10.1023/A:1001559912258},
       URL = {https://doi.org/10.1023/A:1001559912258},
}

@incollection {MR4143239,
    AUTHOR = {Ene, Viviana and Herzog, J\"{u}rgen},
     TITLE = {On the symbolic powers of binomial edge ideals},
 BOOKTITLE = {Combinatorial structures in algebra and geometry},
    SERIES = {Springer Proc. Math. Stat.},
    VOLUME = {331},
     PAGES = {43--50},
 PUBLISHER = {Springer, Cham},
      YEAR = {2020},
   MRCLASS = {13P10 (05E40 13C15)},
  MRNUMBER = {4143239},
       DOI = {10.1007/978-3-030-52111-0\_4},
       URL = {https://doi.org/10.1007/978-3-030-52111-0_4},    
}

@Misc{M2,
    author = {Grayson, Daniel R. and Stillman, Michael E.},
    title = {Macaulay2, a software system for research in algebraic geometry},
    howpublished = {Available at \url{https://math.uiuc.edu/Macaulay2/}},
    eprint = {Available at \url{https://math.uiuc.edu/Macaulay2/}},
    note = {Available at \url{https://math.uiuc.edu/Macaulay2/}}
}

@article {MR2163482,
    AUTHOR = {Herzog, J\"{u}rgen and Hibi, Takayuki},
     TITLE = {The depth of powers of an ideal},
   JOURNAL = {J. Algebra},
  FJOURNAL = {Journal of Algebra},
    VOLUME = {291},
      YEAR = {2005},
     PAGES = {534--550},
      ISSN = {0021-8693,1090-266X},
   MRCLASS = {13C15 (05B35 13A02 13P10)},
  MRNUMBER = {2163482},
MRREVIEWER = {Santiago\ Zarzuela Armengou},
       DOI = {10.1016/j.jalgebra.2005.04.007},
       URL = {https://doi.org/10.1016/j.jalgebra.2005.04.007},
}

@article {JKS,
    AUTHOR = {Jayanthan, A. V. and Kumar, Arvind and Sarkar, Rajib},
     TITLE = {Regularity of powers of quadratic sequences with applications to binomial ideals},
   JOURNAL = {J. Algebra},
  FJOURNAL = {Journal of Algebra},
    VOLUME = {564},
      YEAR = {2020},
     PAGES = {98--118},
      ISSN = {0021-8693},
   MRCLASS = {13D02 (05E40 13A70 13C13)},
  MRNUMBER = {4137693},
MRREVIEWER = {Jorge Neves},
       DOI = {10.1016/j.jalgebra.2020.08.004},
       URL = {https://doi.org/10.1016/j.jalgebra.2020.08.004},
}

@article {MR3011436,
    AUTHOR = {Rauh, Johannes},
     TITLE = {Generalized binomial edge ideals},
   JOURNAL = {Adv. in Appl. Math.},
  FJOURNAL = {Advances in Applied Mathematics},
    VOLUME = {50},
      YEAR = {2013},
     PAGES = {409--414},
      ISSN = {0196-8858},
   MRCLASS = {13F20 (05C25 13P10)},
  MRNUMBER = {3011436},
MRREVIEWER = {Siamak Yassemi},
       DOI = {10.1016/j.aam.2012.08.009},
       URL = {https://doi.org/10.1016/j.aam.2012.08.009},
}

@article {MR1621961,
    AUTHOR = {Kodiyalam, Vijay},
     TITLE = {Asymptotic behaviour of {C}astelnuovo-{M}umford regularity},
   JOURNAL = {Proc. Amer. Math. Soc.},
  FJOURNAL = {Proceedings of the American Mathematical Society},
    VOLUME = {128},
      YEAR = {2000},
     PAGES = {407--411},
      ISSN = {0002-9939},
   MRCLASS = {13D45 (14B15)},
  MRNUMBER = {1621961},
MRREVIEWER = {P. Schenzel},
       DOI = {10.1090/S0002-9939-99-05020-0},
       URL = {https://doi.org/10.1090/S0002-9939-99-05020-0},
}

@article {MR3040610,
    AUTHOR = {Saeedi Madani, Sara and Kiani, Dariush},
     TITLE = {On the binomial edge ideal of a pair of graphs},
   JOURNAL = {Electron. J. Combin.},
  FJOURNAL = {Electronic Journal of Combinatorics},
    VOLUME = {20},
      YEAR = {2013},
     PAGES = {Paper 48, 13},
   MRCLASS = {13F20 (05E40 13D02)},
  MRNUMBER = {3040610},
MRREVIEWER = {Ali Soleyman Jahan},
}

@article {MR2669070,
    AUTHOR = {Herzog, J\"{u}rgen and Hibi, Takayuki and Hreinsd\'{o}ttir, Freyja and Kahle, Thomas and Rauh, Johannes},
     TITLE = {Binomial edge ideals and conditional independence statements},
   JOURNAL = {Adv. in Appl. Math.},
  FJOURNAL = {Advances in Applied Mathematics},
    VOLUME = {45},
      YEAR = {2010},
     PAGES = {317--333},
      ISSN = {0196-8858},
   MRCLASS = {13P10 (05C25 13F20)},
  MRNUMBER = {2669070},
MRREVIEWER = {Seth Sullivant},
       DOI = {10.1016/j.aam.2010.01.003},
       URL = {https://doi.org/10.1016/j.aam.2010.01.003},
}

@article {MR2782571,
    AUTHOR = {Ohtani, Masahiro},
     TITLE = {Graphs and ideals generated by some 2-minors},
   JOURNAL = {Comm. Algebra},
  FJOURNAL = {Communications in Algebra},
    VOLUME = {39},
      YEAR = {2011},
     PAGES = {905--917},
      ISSN = {0092-7872},
   MRCLASS = {13C40 (13P10)},
  MRNUMBER = {2782571},
MRREVIEWER = {Marcel Morales},
       DOI = {10.1080/00927870903527584},
       URL = {https://doi.org/10.1080/00927870903527584},
}

@article {MR1789447,
    AUTHOR = {Ohsugi, Hidefumi and Herzog, J\"{u}rgen and Hibi, Takayuki},
     TITLE = {Combinatorial pure subrings},
   JOURNAL = {Osaka J. Math.},
  FJOURNAL = {Osaka Journal of Mathematics},
    VOLUME = {37},
      YEAR = {2000},
     PAGES = {745--757},
      ISSN = {0030-6126},
   MRCLASS = {13F20 (13P10)},
  MRNUMBER = {1789447},
       URL = {http://projecteuclid.org/euclid.ojm/1200789367},
}

@article {MR3290687,
    AUTHOR = {Ene, Viviana and Herzog, J\"{u}rgen and Hibi, Takayuki and Qureshi, Ayesha Asloob},
     TITLE = {The binomial edge ideal of a pair of graphs},
   JOURNAL = {Nagoya Math. J.},
  FJOURNAL = {Nagoya Mathematical Journal},
    VOLUME = {213},
      YEAR = {2014},
     PAGES = {105--125},
      ISSN = {0027-7630},
   MRCLASS = {13F20 (05C25 13P10)},
  MRNUMBER = {3290687},
MRREVIEWER = {Monica La Barbiera},
       DOI = {10.1215/00277630-2389872},
       URL = {https://doi.org/10.1215/00277630-2389872},
}

@article {MR4544259,
    AUTHOR = {Shen, Yi-Huang and Zhu, Guangjun},
     TITLE = {Regularity of powers of (parity) binomial edge ideals},
   JOURNAL = {J. Algebraic Combin.},
  FJOURNAL = {Journal of Algebraic Combinatorics. An International Journal},
    VOLUME = {57},
      YEAR = {2023},
     PAGES = {75--100},
      ISSN = {0925-9899},
   MRCLASS = {13C13 (05E40 13C15 13D02 13F20)},
  MRNUMBER = {4544259},
       DOI = {10.1007/s10801-022-01163-w},
       URL = {https://doi.org/10.1007/s10801-022-01163-w},
}

@incollection {MR1627343,
    AUTHOR = {Diaconis, Persi and Eisenbud, David and Sturmfels, Bernd},
     TITLE = {Lattice walks and primary decomposition},
 BOOKTITLE = {Mathematical essays in honor of {G}ian-{C}arlo {R}ota
              ({C}ambridge, {MA}, 1996)},
    SERIES = {Progr. Math.},
    VOLUME = {161},
     PAGES = {173--193},
 PUBLISHER = {Birkh\"{a}user Boston, Boston, MA},
      YEAR = {1998},
   MRCLASS = {13P10 (05A15 05E99)},
  MRNUMBER = {1627343},
MRREVIEWER = {Heinrich Niederhausen},
}

@book {MR3838370,
    AUTHOR = {Herzog, J\"{u}rgen and Hibi, Takayuki and Ohsugi, Hidefumi},
     TITLE = {Binomial ideals},
    SERIES = {Graduate Texts in Mathematics},
    VOLUME = {279},
 PUBLISHER = {Springer, Cham},
      YEAR = {2018},
     PAGES = {xix+321},
      ISBN = {978-3-319-95347-2; 978-3-319-95349-6},
   MRCLASS = {13F20 (13-02 13P10 13P25 14M25 52B20)},
  MRNUMBER = {3838370},
MRREVIEWER = {Thomas Kahle},
       DOI = {10.1007/978-3-319-95349-6},
       URL = {https://doi.org/10.1007/978-3-319-95349-6},
}

@article {MR4210998,
    AUTHOR = {Jayanthan, A. V. and Selvaraja, S.},
     TITLE = {Upper bounds for the regularity of powers of edge ideals of graphs},
   JOURNAL = {J. Algebra},
  FJOURNAL = {Journal of Algebra},
    VOLUME = {574},
      YEAR = {2021},
     PAGES = {184--205},
      ISSN = {0021-8693},
   MRCLASS = {13D02 (05C25 13A70 13F20)},
  MRNUMBER = {4210998},
MRREVIEWER = {Jorge Neves},
       DOI = {10.1016/j.jalgebra.2021.01.030},
       URL = {https://doi.org/10.1016/j.jalgebra.2021.01.030},
}

@book {MR2724673,
    AUTHOR = {Herzog, J\"{u}rgen and Hibi, Takayuki},
     TITLE = {Monomial ideals},
    SERIES = {Graduate Texts in Mathematics},
    VOLUME = {260},
 PUBLISHER = {Springer-Verlag London, Ltd., London},
      YEAR = {2011},
     PAGES = {xvi+305},
      ISBN = {978-0-85729-105-9},
   MRCLASS = {13D02 (05E40 13D40 13F55 13P10)},
  MRNUMBER = {2724673},
MRREVIEWER = {Rahim Zaare-Nahandi},
       DOI = {10.1007/978-0-85729-106-6},
       URL = {https://doi.org/10.1007/978-0-85729-106-6},
}

@article {MR3884545,
    AUTHOR = {Cid-Ruiz, Yairon},
     TITLE = {Regularity and {G}r\"{o}bner bases of the {R}ees algebra of edge
              ideals of bipartite graphs},
   JOURNAL = {Matematiche (Catania)},
  FJOURNAL = {Le Matematiche},
    VOLUME = {73},
      YEAR = {2018},
     PAGES = {279--296},
      ISSN = {0373-3505},
   MRCLASS = {13A30 (05C25 13P10)},
  MRNUMBER = {3884545},
MRREVIEWER = {Mehrdad Nasernejad},
       DOI = {10.4418/2018.73.2.4},
       URL = {https://doi.org/10.4418/2018.73.2.4},
}

@article {MR304376,
    AUTHOR = {Hochster, M.},
     TITLE = {Rings of invariants of tori, {C}ohen-{M}acaulay rings
              generated by monomials, and polytopes},
   JOURNAL = {Ann. of Math. (2)},
  FJOURNAL = {Annals of Mathematics. Second Series},
    VOLUME = {96},
      YEAR = {1972},
     PAGES = {318--337},
      ISSN = {0003-486X},
   MRCLASS = {13H10 (20G05)},
  MRNUMBER = {304376},
MRREVIEWER = {Tadayuki Matsuoka},
       DOI = {10.2307/1970791},
       URL = {https://doi.org/10.2307/1970791},
}

@Misc{Normaliz,
  author       = {W. Bruns and B. Ichim and C. S\"oger and U. von der Ohe},
  howpublished = {Available at \texttt{https://www.normaliz.uni-osnabrueck.de}},
  title        = {{Normaliz. Algorithms for rational cones and affine monoids}},
}

@book {MR3362802,
    AUTHOR = {Villarreal, Rafael H.},
     TITLE = {Monomial algebras},
    SERIES = {Monographs and Research Notes in Mathematics},
   EDITION = {Second},
 PUBLISHER = {CRC Press, Boca Raton, FL},
      YEAR = {2015},
     PAGES = {xviii+686},
      ISBN = {978-1-4822-3469-5},
   MRCLASS = {13-02 (05C65 05E15 90C27)},
  MRNUMBER = {3362802},
MRREVIEWER = {Siamak Yassemi},
}

@article {MR1390693,
    AUTHOR = {Conca, Aldo and Herzog, J\"{u}rgen and Valla, Giuseppe},
     TITLE = {Sagbi bases with applications to blow-up algebras},
   JOURNAL = {J. Reine Angew. Math.},
  FJOURNAL = {Journal f\"{u}r die Reine und Angewandte Mathematik. [Crelle's Journal]},
    VOLUME = {474},
      YEAR = {1996},
     PAGES = {113--138},
      ISSN = {0075-4102},
   MRCLASS = {13P10 (13A30)},
  MRNUMBER = {1390693},
MRREVIEWER = {Ngo Viet Trung},
       DOI = {10.1515/crll.1996.474.113},
       URL = {https://doi.org/10.1515/crll.1996.474.113},
}

@article {MR696134,
    AUTHOR = {Eisenbud, David and Huneke, Craig},
     TITLE = {Cohen--{M}acaulay {R}ees algebras and their specialization},
   JOURNAL = {J. Algebra},
  FJOURNAL = {Journal of Algebra},
    VOLUME = {81},
      YEAR = {1983},
     PAGES = {202--224},
      ISSN = {0021-8693},
   MRCLASS = {13H10 (13C05 13C15)},
  MRNUMBER = {696134},
MRREVIEWER = {J\"{u}rgen Herzog},
       DOI = {10.1016/0021-8693(83)90216-8},
       URL = {https://doi.org/10.1016/0021-8693(83)90216-8},
}

@article {MR4425287,
    AUTHOR = {Ene, Viviana and Rinaldo, Giancarlo and Terai, Naoki},
     TITLE = {Powers of binomial edge ideals with quadratic {G}r\"{o}bner bases},
   JOURNAL = {Nagoya Math. J.},
  FJOURNAL = {Nagoya Mathematical Journal},
    VOLUME = {246},
      YEAR = {2022},
     PAGES = {233--255},
      ISSN = {0027-7630},
   MRCLASS = {13D02 (05E40 13F65 13P10 14M05)},
  MRNUMBER = {4425287},
       DOI = {10.1017/nmj.2021.1},
       URL = {https://doi.org/10.1017/nmj.2021.1},
}

@article {MR638557,
    AUTHOR = {Huneke, Craig},
     TITLE = {On the associated graded ring of an ideal},
   JOURNAL = {Illinois J. Math.},
  FJOURNAL = {Illinois Journal of Mathematics},
    VOLUME = {26},
      YEAR = {1982},
     PAGES = {121--137},
      ISSN = {0019-2082},
   MRCLASS = {13H10},
  MRNUMBER = {638557},
MRREVIEWER = {Ming-chang Kang},
}

@article {MR4173994,
    AUTHOR = {Jayanthan, A. V. and Kumar, Arvind and Sarkar, Rajib},
     TITLE = {Almost complete intersection binomial edge ideals and their
              {R}ees algebras},
   JOURNAL = {J. Pure Appl. Algebra},
  FJOURNAL = {Journal of Pure and Applied Algebra},
    VOLUME = {225},
      YEAR = {2021},
     PAGES = {Paper No. 106628, 19},
      ISSN = {0022-4049,1873-1376},
   MRCLASS = {13D02 (05E40 13A30 13A70)},
  MRNUMBER = {4173994},
MRREVIEWER = {Dumitru\ Ioan\ Stamate},
       DOI = {10.1016/j.jpaa.2020.106628},
       URL = {https://doi.org/10.1016/j.jpaa.2020.106628},
}

@article {MR3798623,
    AUTHOR = {Kimura, Kyouko and Terai, Naoki and Yassemi, Siamak},
     TITLE = {The projective dimension of the edge ideal of a very
              well-covered graph},
   JOURNAL = {Nagoya Math. J.},
  FJOURNAL = {Nagoya Mathematical Journal},
    VOLUME = {230},
      YEAR = {2018},
     PAGES = {160--179},
      ISSN = {0027-7630},
   MRCLASS = {13D02 (05C25 13F55)},
  MRNUMBER = {3798623},
MRREVIEWER = {Margherita Barile},
       DOI = {10.1017/nmj.2017.7},
       URL = {https://doi.org/10.1017/nmj.2017.7},
}

@article {MR4405525,
    AUTHOR = {Kumar, Arvind},
     TITLE = {Rees algebra and special fiber ring of binomial edge ideals of closed graphs},
   JOURNAL = {Illinois J. Math.},
  FJOURNAL = {Illinois Journal of Mathematics},
    VOLUME = {66},
      YEAR = {2022},
     PAGES = {79--90},
      ISSN = {0019-2082},
   MRCLASS = {13A70 (05E40)},
  MRNUMBER = {4405525},
       DOI = {10.1215/00192082-9702270},
       URL = {https://doi.org/10.1215/00192082-9702270},
}

@article {MR1657721,
    AUTHOR = {Ohsugi, Hidefumi and Hibi, Takayuki},
     TITLE = {Koszul bipartite graphs},
   JOURNAL = {Adv. in Appl. Math.},
  FJOURNAL = {Advances in Applied Mathematics},
    VOLUME = {22},
      YEAR = {1999},
     PAGES = {25--28},
      ISSN = {0196-8858,1090-2074},
   MRCLASS = {05C75},
  MRNUMBER = {1657721},
       DOI = {10.1006/aama.1998.0615},
       URL = {https://doi.org/10.1006/aama.1998.0615},
}

@article {MR4563443,
    AUTHOR = {Wang, Hong and Tang, Zhongming},
     TITLE = {Depth of powers of binomial edge ideals of complete bipartite
              graphs},
   JOURNAL = {Comm. Algebra},
  FJOURNAL = {Communications in Algebra},
    VOLUME = {51},
      YEAR = {2023},
     PAGES = {2472--2483},
      ISSN = {0092-7872,1532-4125},
   MRCLASS = {13C15 (05E40 13A70)},
  MRNUMBER = {4563443},
       DOI = {10.1080/00927872.2022.2164009},
       URL = {https://doi.org/10.1080/00927872.2022.2164009},
}

@article {MR4074049,
    AUTHOR = {H\`a, Huy T\`ai and Nguyen, Hop Dang and Trung, Ngo Viet and Trung, Tran Nam},
    TITLE = {Symbolic powers of sums of ideals},
    JOURNAL = {Math. Z.},
    FJOURNAL = {Mathematische Zeitschrift},
    VOLUME = {294},
    YEAR = {2020},
    NUMBER = {3-4},
    PAGES = {1499--1520},
    ISSN = {0025-5874,1432-1823},
    MRCLASS = {13C15 (13D07 18G15)},
    MRNUMBER = {4074049},
    MRREVIEWER = {Elo\'{\i}sa\ Grifo},
    DOI = {10.1007/s00209-019-02323-8},
    URL = {https://doi.org/10.1007/s00209-019-02323-8},
}
\end{document}